\newtheorem{teo}{Theorem}[section]
\newtheorem{lema}[teo]{Lemma}
\newtheorem{remark}[teo]{Remark}
\newtheorem{corolario}[teo]{Corollary}
\newtheorem{prop}[teo]{Proposition}
\newtheorem{definition}{Definition}
\newcommand{\ind}[1]{\mathds{1}_{#1}}
\def\bibname{Referencias}
\renewenvironment{thebibliography}[1]
     {\footnotesize \section*{\normalsize \bibname}%
      \list{[\arabic{enumi}]}{\settowidth\labelwidth{[#1]}
            \leftmargin\labelwidth
            \advance\leftmargin\labelsep
            \usecounter{enumi}}
            }
\newcommand{\supp}{\mathop{\rm supp}}
\newcommand{\dist}{\mathop{\rm dist}}
\newcommand{\sign}{\mathop{\rm sign}}
\newcommand{\proj}[1]{\mathcal{P}#1}
\newcommand{\suppD}{\mathop{\rm supp}_{\mathcal{D}'}}
\newcommand{\ds}{\,\mathrm{d}s}
\newcommand{\dt}{\,\mathrm{d}t}
\newcommand{\dx}{\,\mathrm{d}x}
\newcommand{\R}{\mathbb{R}}
\renewcommand{\L}{\mathrm{L}}
\newcommand{\C}{\mathrm{C}}
\begin{document}


\title{A nonlocal two phase Stefan problem}

\author{Emmanuel Chasseigne$^\dagger$}
\thanks{$^\dagger$ Laboratoire de Math\'ematiques et Physique Th\'eorique, U. F. Rabelais, Parc de Grandmont, 37200 Tours, France
email: emmanuel.chasseigne@lmpt.univ-tours.fr}
\author {Silvia Sastre-Gómez$^\ddagger$}\thanks{$^\ddagger$ Departamento de Matemática Aplicada, U. Complutense de Madrid. 
Partially supported by the FPU grant from the Spanish 
Ministerio de Educación and the projects MTM2009-07540, UCM-CAM, Grupo de Investigación CADEDIF. email: silviasastre@mat.ucm.es}

\maketitle

\begin{abstract} We study a nonlocal version of the two-phase Stefan
  problem, which models a phase transition problem between two distinct
  phases evolving to distinct heat equations. Mathematically speaking,
  this consists in deriving a theory for sign-changing solutions of the
  equation, $u_t=J\ast v -v $, $v=\Gamma(u)$, where the
  monotone graph is given by $\Gamma(s)=\sign(s)(|s|-1)_+$.
  We give general results of existence, uniqueness and
  comparison, in the spirit of \cite{BCQ2012}. Then we focus on the
  study of the asymptotic behaviour for sign-changing solutions, which
  present challenging difficulties due to the non-monotone evolution of
  each phase.
\end{abstract}

\footnotetext{
\textit{AMS subject classifications:} 80A22,
	   35R09, 
	   45K05, 
	   45M05. 
	   \\
\textit{keywords:} Stefan problem, mushy regions, nonlocal equations, degenerate parabolic equations.}


\section{Introduction}

The aim of this paper is to study the following nonlocal version of the
two-phase Stefan problem in $\mathbb{R}^N$ 
\begin{equation}\label{eq1} \left\{ \begin{array}{ll} u_t=J\ast v-v, &
  \mbox{where } v=\Gamma(u),\\ u(\cdot,0)=f, & \end{array} \right.
\end{equation} 
where $J$ is a smooth nonnegative convolution kernel, $u$ is the enthalpy and
$\Gamma(u)=\sign(u)\big(|u|-1\big)_+$ (see below more precise
assumptions and explanations). We study this nonlocal equation in the
spirit of \cite{BCQ2012}, but for sign-changing solutions, which
presents very challenging difficulties concerning the asymptotic
behaviour. 

\noindent\textsc{The two-phase Stefan problem --} In general, the Stefan
problem is a non-linear and  moving boundary problem which aims to
describe the temperature and enthalpy distribution in a phase
transition between several states. The history of the problem goes back
to Lam\'e and Clapeyron \cite{Lame}, and afterwards \cite{Stefan}.  For
the local model can be seen e.g. the monographs \cite{Chalmers} and
\cite{Woodruff} for the phenomenology and modeling; \cite{Crank},
\cite{Meirmanov}, \cite{Rubinstein} and \cite{Visintin} for the
mathematical aspects of the model. 

The main model uses a local equation under the form $u_t=\Delta v$,
$v=\Gamma(u)$ but recently, a nonlocal version of the one-phase Stefan
problem was introduced in  \cite{BCQ2012}, which is equivalent to
\eqref{eq1} in the case of nonnegative solutions.

This new mathematical model turns out to be rather interesting from the
physical point of view at an intermediate (mesoscopic) scale, since it
explains for instance the formation and evolution of \textit{mushy
regions} (regions which are in an intermediate state between water and 
ice). We are not going to enter into more details here and refer the 
reader to \cite{BCQ2012} for more information about the model and more 
bibliographical references.

Let us however mention some basic facts: the one-phase problem models
for instance the transition between ice and water: the ``usual'' heat
equation (whether local or nonlocal) governs the evolution in the water
phase while the temperature does not evolve in the ice phase, maintained
at $0^\circ$. The free boundary separating water from ice evolves
according to how the heat contained in water is used to break the ice.   
 
In the two-phase Stefan problem, the temperature can also evolve in the
second phase, modeled by a second heat equation with different
parameters. In this model, the temperature $v=\Gamma(u)$ is the quantity
which identifies the different phases: the region $\{v>0\}$ is the first
phase, $\{v<0\}$ represents the second phase and the intermediate
region, $\{v=0\}$ is where the transition occurs, containing what is
called a \textit{mushy region}.  
  
In all the paper, the function $J$ in equation \eqref{eq1} is assumed to
be continuous, non negative, compactly supported, radially symmetric,
with $ \int_{\mathbb{R}}J=1$\,. We denote by $R_{J}$ the radius of the 
support of $J$: $\supp(J)=B_{R_J}$, where $B_{R_J}$ is the ball 
centered in zero with radius $R_J$. The graph $v=\Gamma(u)$, is defined
generally as follows 
\begin{equation}\label{gamma} \Gamma(u)= \left\{ \begin{array}{ll}
  c_1(u-e_1), 	& 	\mbox{if } u<e_1\\ 0, & \mbox{if } e_1\le u \le
  e_2\\ c_2(u-e_2),	& 	\mbox{if } u>e_2.  \end{array} \right.
\end{equation} 
with $e_1,\,e_2,\,c_1$ and $c_2$ real variables,  that satisfy that
$e_1<0< e_2$ and $c_1,\,c_2>0$ (see Figure \ref{figure} below). After a simple
change of units, we arrive at the graph of equation~\eqref{eq1}:
$\Gamma(u)= \sign(u)\left(|u|-1\right)_+\,,$ where we denote by 
$s_{+}$ the quantity $\max(s,0)$, as is standard and $\sign(s)$ equals 
$-1,\ +1$ or $0$ according to $s<0, \ s>0,$ or $s=0$.

\begin{figure}[htp] \begin{center}
  \includegraphics [height=4cm]{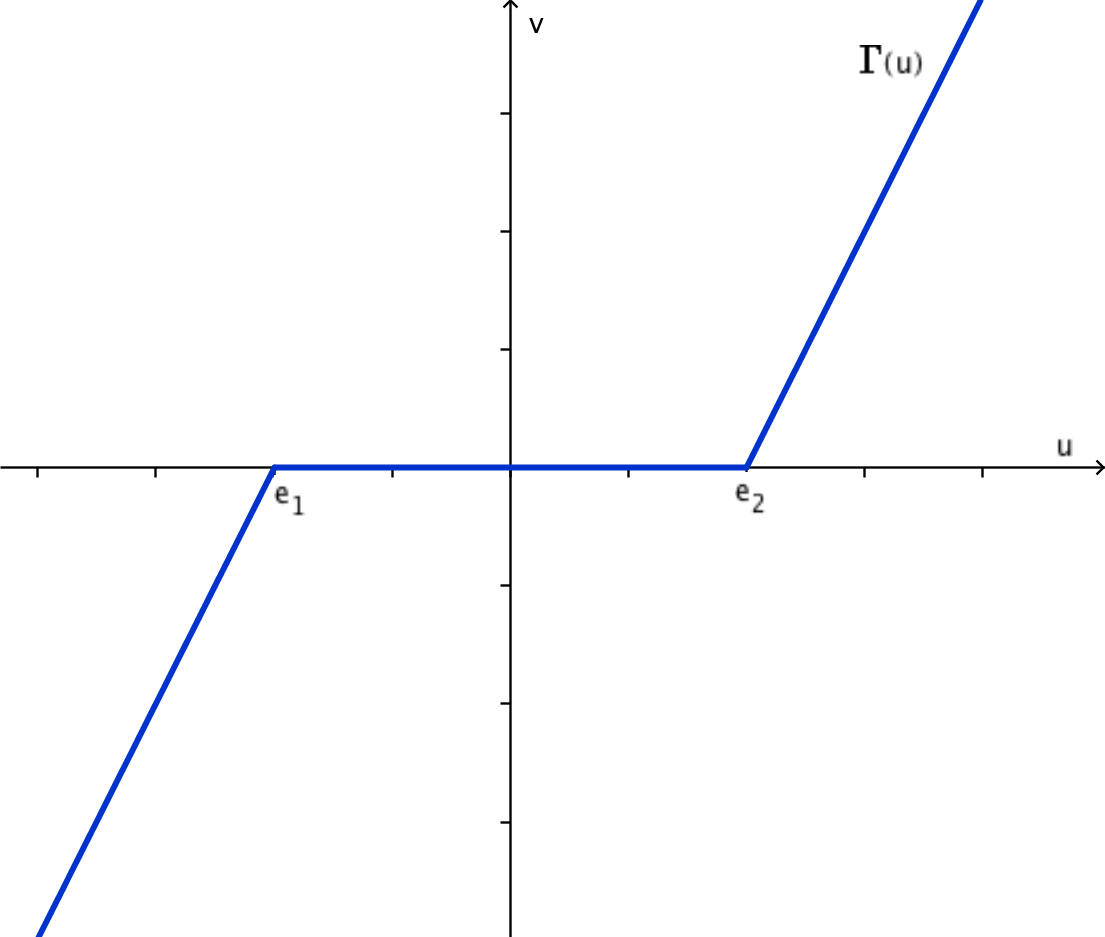}
  \caption{A typical graph $\Gamma$}
  \label{figure}
\end{center} \end{figure}

\noindent\textsc{Asymptotic Behaviour --} 
In \cite{BCQ2012}, the authors proved several qualitative properties for
the nonlocal one-phase Stefan problem. Most of them are also valid in
the two-phase problem, but the asymptotic behaviour is far from being
fully understood when solutions change sign. 

Actually, up to our knowledge, there are no results for the asymptotic
behaviour of sign-changing solutions even in the local two-phase Stefan
problem. The aim of this paper is to try to provide at least some partial
answers. 

Going back to the one-phase Stefan problem, it can be shown that there
exists a projection operator $\proj{}$ which maps any nonnegative
initial data $f$ to $\proj{f}$, which is the unique solution to a
non-local obstacle problem at level one (see \cite[p. 23]{BCQ2012}). 
Then the asymptotic behaviour 
of the solution $u$ starting with $f$ is given by $\proj{f}$. Actually,
this can be done exactly this way if, for example, $f$ is compactly
supported.  Then $\proj{}$ can be extended to all $\L^1$ (the space of 
integrable functions), using a standard closure theory of monotone 
operators.

A key argument in the one-phase Stefan problem is the \textit{retention
property}, which means that once the solution becomes positive at some
point, it remains positive for greater times. In this case, the
interfaces are monotone: the positivity sets (of $u$ and $v$) grow.
With this particular property, the Baiocchi transform gives all necessary 
and sufficient information to derive the asymptotic obstacle
problem (for information about the Baiocchi transform, see \cite{Baiocchi}).
 
In the case of the two-phase Stefan problem, the situation is far more
delicate to handle, due to the fact that sign-changing solutions do not
enjoy a similar retention property in general: a solution can be
positive, but later on it can become negative due to the presence of a
high negative mass nearby. This implies that the Baiocchi transform is
not a relevant variable anymore in general and many arguments fail.

However, we shall study here some situations in which we can still
apply, to some extent, the techniques using the Baiocchi transform and
get the asymptotic behaviour for sign-changing solutions.

\noindent\textsc{Main Results --} we first briefly derive a complete
theory of existence, uniqueness and comparison for the nonlocal
two-phase Stefan problem, which is based essentially on the same ideas
in \cite{BCQ2012}. Then we concentrate on the asymptotic behaviour of
sign-changing solutions. Though we do not provide a complete picture of
the question which appears to be rather difficult, we give some
sufficient conditions which guarantee the identification of the limit.

Namely, we first give in Section~\ref{sect:not.interact} a criterium
which ensures that the positive and negative phases will never interact.
This implies that the asymptotic behaviour is given separately by each
phase, considered as solutions of the one-phase Stefan problem.

Then we study the case when some interaction between the phases can
occur, but only in the mushy zone, $\{|u|<1\}$. In this case we prove
that the asymptotic behaviour can be described by a bi-obstacle problem,
the solution being cut at levels $-1$ and $+1$. We prove that this
obstacle problem has a unique solution in a suitable class, and then we
extend the operator which maps the initial data to the asymptotic limit
to more general data by a standard approximation procedure. Notice that
for the local model, such a result would be rather trivial since the
mushy regions do not evolve. However, here those regions do evolve due
to the nonlocal character of the equation. 

Finally, we give an explicit example when the enthalpy becomes nonnegative 
in finite time even if the initial data is not, so that the asymptotic behaviour 
is driven  by the one-phase Stefan regime.

\noindent\textsc{Notations --} Throughout the paper, we use the 
following notation: $\mathrm{C}(\R^{N};\R)$, or in shorter 
form $\mathrm{C}(\R^{N})$ is the space of continuous functions from 
$\R^{N}$ with values in $\R$. Other spaces we consider:
\begin{itemize}
	\item $\mathrm{BC}\left(\mathbb{R}^N\right)=\{\varphi\in \C\left(\mathbb{R}^N\right):\,\varphi\mbox{ bounded in }\mathbb{R}^N\}$;
	\item $\C_c\left(\mathbb{R}^N\right)=\{\varphi\in \C\left(\mathbb{R}^N\right):\,\varphi\mbox{ compactly supported }\}$;
	\item $\C^{\infty}_c\left(\mathbb{R}^N\right)=\{\varphi\in \C^{\infty}\left(\mathbb{R}^N\right):\,\varphi\mbox{ compactly supported }\}$;
	\item $\C_0\left(\mathbb{R}^N\right)=\{\varphi\in 
	\C\left(\mathbb{R}^N\right):\,\varphi\rightarrow 0\mbox{ as } |x|\rightarrow \infty\}$;
	\item $\L^{1}(\R^{N}) = \{\varphi:\R^{N}\to\R\,,\text{measurable 
	and integrable in }\R^{N}\}$;
	\item $\C\left ([0,\infty); \L^1\left(\mathbb{R}^N\right)\right)$ 
	is the space of functions $t\mapsto u(t)$ wich are continuous in 
	time, with values in $\L^{1}(\R^{N})$ for any $t\geq0$;{}
	\item $\L^{1}\left ([0,T]; \L^1\left(\mathbb{R}^N\right)\right)$ 
	is the space of functions $t\mapsto u(t)$ wich are integrable in 
	time over $[0,T]$, with values in $\L^{1}(\R^{N})$ for any $t\geq0$.
\end{itemize}
Recall that throughout the paper, $J$ is nonnegative, radially symetric, compactly 
supported with $\int J=1$ and $\supp(J)=B_{R_{J}}$. Finally, we denote 
by $s_{+}=\max(s,0)$ and $s_{-}=\max(-s,0)$.


\section{Basic theory of the model}

In this section we will develop the basic theory for the solution of the
two-phase Stefan problem. Some results are already contained in
\cite{BCQ2012} after some obvious adaptation. This is due to the fact 
that for the one-phase Stefan model, $\Gamma(u)=(u-1)_{+}$ while here, 
we deal with a symetric function $\Gamma(u)=\sign(u)(u-1)_{+}$ which 
is very close to the first one.

However, for the sake of completeness, we shall rewrite the proof when 
the adaptation may not be so straightforward, and give the precise 
reference otherwise. 


\subsection {$\L^1$ theory.}

We start with the theory for integrable initial data. In this case the
solution is regarded as a continuous curve in
$\L^1\left(\mathbb{R}^N\right)$.
 
\begin{definition} Let $f\in \L^1\left(\mathbb{R}^N\right)$. An
  $\L^1$-solution of \eqref{eq1} is a function $u$ in  
  $\C\left ([0,\infty); \L^1\left(\mathbb{R}^N\right)\right)$ such that
    \eqref{eq1} holds in the sense of distributions, or equivalently, if
    for every $t>0$, $u(t) \in \L^1\left(\mathbb{R}^N\right)$ and
    \begin{equation}\label{solution} u(t)=\displaystyle f
      +\displaystyle\int_0^t(J\ast \Gamma(u)(s)-\Gamma(u)(s))\ds,\quad
      \mbox{a.e.} \end{equation}	 \end{definition}

\begin{remark} If  $u$ is an $\L^1$-solution, then $u\in
  \L^1([0,T]; \L^1(\mathbb{R^N}))$.
 for all $T>0$. Hence,
  \eqref{eq1} holds, not only in the sense of distributions, but also
  a.e., and $u$ is said to be a strong solution. Moreover, since
  $\Gamma(u)\in \C\left([0,\infty);
    \L^1\left(\mathbb{R}^N\right)\right)$, we also have $u\in
    \C^1\left([0,\infty); \L^1\left(\mathbb{R}^N\right)\right)$, and the
      equation holds a.e. in $x$ for all $t \ge0$.  \end{remark}

\begin{teo}\label{existence_solution_fix_point} For any $f\in
  \L^1\left(\mathbb{R}^N\right)$, there exists a unique $\L^1$-solution
  of \eqref{eq1}.  \end{teo}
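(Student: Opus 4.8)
The plan is to prove existence and uniqueness via a fixed-point argument applied to the integral formulation \eqref{solution}, exploiting the fact that $\Gamma$ is a globally Lipschitz function and that the convolution operator is bounded on $\L^1$. First I would define, for a fixed $f \in \L^1(\R^N)$ and a time horizon $T>0$, the solution map $\mathcal{T}$ on the Banach space $X_T = \C([0,T];\L^1(\R^N))$ by
\begin{equation*}
\mathcal{T}[u](t) = f + \int_0^t \big(J\ast \Gamma(u)(s) - \Gamma(u)(s)\big)\ds.
\end{equation*}
The goal is to show $\mathcal{T}$ is a contraction on $X_T$ for $T$ small enough, so that Banach's fixed point theorem yields a unique local solution, which then extends to a global one since the estimates are time-independent.

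The key analytic facts I would establish are the following. Since $\Gamma(s)=\sign(s)(|s|-1)_+$ is Lipschitz with constant $1$, we have $\|\Gamma(u)(s)-\Gamma(w)(s)\|_{\L^1} \le \|u(s)-w(s)\|_{\L^1}$ pointwise in $s$. Combined with Young's inequality $\|J\ast g\|_{\L^1}\le \|J\|_{\L^1}\|g\|_{\L^1}=\|g\|_{\L^1}$ (using $\int J =1$), the integrand is controlled: $\|J\ast\Gamma(u)(s)-\Gamma(u)(s) - (J\ast\Gamma(w)(s)-\Gamma(w)(s))\|_{\L^1}\le 2\|u(s)-w(s)\|_{\L^1}$. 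Integrating over $[0,t]$ then gives
\begin{equation*}
\sup_{t\in[0,T]}\|\mathcal{T}[u](t)-\mathcal{T}[w](t)\|_{\L^1} \le 2T\,\sup_{t\in[0,T]}\|u(t)-w(t)\|_{\L^1},
\end{equation*}
so $\mathcal{T}$ is a contraction whenever $T<1/2$. I would separately check that $\mathcal{T}$ maps $X_T$ into itself: that $\mathcal{T}[u]$ is continuous in time with values in $\L^1$ follows because $s\mapsto\Gamma(u)(s)$ is continuous into $\L^1$ (as $u\in X_T$ and $\Gamma$ is Lipschitz), and the integral of an $\L^1$-continuous function is itself continuous.

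To pass from local to global existence, the crucial point is that the contraction constant $2T$ does not depend on the initial data $f$ or on the size of $u$. Hence I would fix $T_0<1/2$, solve on $[0,T_0]$, then restart from $u(T_0)\in\L^1(\R^N)$ as new initial data and solve on $[T_0,2T_0]$, and iterate. Since each step advances time by the same fixed amount $T_0$, finitely many steps cover any $[0,T]$, and the pieces glue into a unique solution in $\C([0,\infty);\L^1(\R^N))$. Uniqueness on the whole half-line follows from local uniqueness at each step.

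The main obstacle, or rather the point demanding the most care, is verifying that $\mathcal{T}$ genuinely lands in $\C([0,T];\L^1)$ and that the Bochner integral in \eqref{solution} is well-defined as an $\L^1$-valued object — this requires knowing $s\mapsto \Gamma(u)(s)$ is strongly measurable and integrable on $[0,T]$ with values in $\L^1(\R^N)$, which in turn rests on the continuity of $u$ and the Lipschitz bound $|\Gamma(s)|\le|s|$ giving $\|\Gamma(u)(s)\|_{\L^1}\le\|u(s)\|_{\L^1}$. Everything else reduces to the two elementary estimates above. I expect the argument to follow \cite{BCQ2012} closely, the only genuine difference being the two-sided (sign-changing) structure of $\Gamma$, which does not affect any of the Lipschitz or $\L^1$ bounds since they depend only on $|\Gamma'|\le 1$.
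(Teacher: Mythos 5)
Your proposal is correct and follows essentially the same route as the paper: the same fixed-point map $\mathcal{T}_f$ on $\C([0,t_0];\L^1(\R^N))$, the same Lipschitz-plus-Young estimate giving the contraction constant $2t_0<1$, and the same iteration to all positive times using that the time step is independent of the data. The extra care you take in checking that $\mathcal{T}$ maps the space into itself and that the $\L^1$-valued integral is well defined is a welcome (if routine) addition that the paper leaves implicit.
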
 
  
  \begin{proof} Let $\mathcal{B}_{t_0}$ be
    the Banach space consisting of the functions $u\in
    \C\left([0,t_0];\L^1\left(\mathbb{R}^N\right)\right)$ endowed with
    the norm, \[ \||u\||=\max\limits_{0\le t\le
    t_0}\|u(t)\|_{\L^1\left(\mathbb{R}^N\right)}.  \] 
    For any given $f\in\L^1(\R^{N})$, we define the
    operator $\mathcal{T}_{f}:\mathcal{B}_{t_0} \rightarrow
    \mathcal{B}_{t_0}$ through \[
      \left(\mathcal{T}_fu\right)(t)=f+\displaystyle\displaystyle\int_0^t
      \left(J\ast \Gamma(u)(s)-\Gamma (u)(s)\right)\ds.  \] Since
      $\Gamma(u)$ is Lipschitz continuous, we have the estimate
      \[\begin{aligned}
      \||\mathcal{T}_fu-\mathcal{T}_fv\||& \leq 
      \int_{0}^{t_{0}}\int_{\R^{N}}
      \bigg(J\ast\big|\Gamma(u)-\Gamma(v)\big| + 
      |\Gamma(u)-\Gamma(v)|\bigg)\dx\ds \\ & \leq 
      2\int_{0}^{t_{0}}\int_{\R^{N}}\big|u-v\big|\dx\ds{}
      \leq 2t_{0}\||u-v\||\,.{}
      \end{aligned}
      \]
      Hence if $t_0<1/2$, the operator $\mathcal{T}_{f}$ turns out to be contractive.
      
      Existence and uniqueness
      in the time interval $[0,t_0]$ follow by using Banach's fixed
      point Theorem. The length of the existence and uniqueness time
      interval does not depend on the initial data, so, we can iterate
      the argument to extend the result to all positive times by a 
      standard procedure, and we end up with a solution in $\C\left([0,\infty);\L^1\left(\mathbb{R}^N\right)\right)$.
    \end{proof} 

\noindent{\bf Conservation of energy of the $\L^1$-solutions.}

\begin{teo} Let $f\in \L^1\left(\mathbb{R}^N\right)$. The
  $\L^1$-solution $u$ to \eqref{eq1} satisfies 
  \[
    \displaystyle\int_{\mathbb{R}^N}u(t)=\displaystyle\int_{\mathbb{R}^N}f,\quad
    \mbox{for every } t>0.  
  \] 
\end{teo}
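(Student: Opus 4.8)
The plan is to prove conservation of energy by integrating the equation over $\mathbb{R}^N$ and showing that the nonlocal operator $u \mapsto J\ast \Gamma(u) - \Gamma(u)$ integrates to zero. The starting point is the integral formulation~\eqref{solution}: for every $t>0$ we have, almost everywhere,
\[
u(t) = f + \int_0^t \big(J\ast\Gamma(u)(s) - \Gamma(u)(s)\big)\ds.
\]
Integrating this identity in $x$ over $\mathbb{R}^N$ and using Fubini's theorem (justified since $u \in \L^1([0,T];\L^1(\mathbb{R}^N))$ and $\Gamma(u)(s) \in \L^1(\mathbb{R}^N)$ for a.e.\ $s$, by the Remark) yields
\[
\int_{\mathbb{R}^N} u(t) = \int_{\mathbb{R}^N} f + \int_0^t \int_{\mathbb{R}^N}\big(J\ast\Gamma(u)(s) - \Gamma(u)(s)\big)\dx\,\ds.
\]

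First I would establish the key algebraic fact that the inner spatial integral vanishes for each fixed $s$. Setting $w = \Gamma(u)(s) \in \L^1(\mathbb{R}^N)$, the claim is $\int_{\mathbb{R}^N}(J\ast w - w)\dx = 0$. This follows from Young's inequality and Fubini: since $\int_{\mathbb{R}^N}(J\ast w) = \big(\int_{\mathbb{R}^N}J\big)\big(\int_{\mathbb{R}^N}w\big) = \int_{\mathbb{R}^N}w$, using $\int J = 1$. Indeed, interchanging the order of integration,
\[
\int_{\mathbb{R}^N} (J\ast w)(x)\dx = \int_{\mathbb{R}^N}\int_{\mathbb{R}^N} J(x-y)\,w(y)\,\mathrm{d}y\dx = \int_{\mathbb{R}^N} w(y)\Big(\int_{\mathbb{R}^N}J(x-y)\dx\Big)\mathrm{d}y = \int_{\mathbb{R}^N} w(y)\,\mathrm{d}y,
\]
so the convolution preserves the total mass and $\int_{\mathbb{R}^N}(J\ast w - w)\dx = 0$.

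Combining these, the double integral in $s$ and $x$ is the time-integral of a function that is identically zero, hence it vanishes, and we conclude $\int_{\mathbb{R}^N}u(t) = \int_{\mathbb{R}^N}f$ for every $t>0$. I expect the main technical point to be the careful justification of the two applications of Fubini's theorem: one to swap the spatial integral past the time-integral in~\eqref{solution}, and one inside the convolution to compute $\int J\ast w$. Both are routine given the integrability from the Remark ($u,\Gamma(u) \in \L^1([0,T];\L^1(\mathbb{R}^N))$) together with $J \in \L^1(\mathbb{R}^N)$, so that all integrands are absolutely integrable on the relevant product spaces; there is no genuine obstacle beyond this bookkeeping, which is exactly what makes this a clean consequence of the mass-preserving structure of the nonlocal operator.
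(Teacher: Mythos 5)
Your proof is correct and follows essentially the same route as the paper: integrate the integral formulation \eqref{solution} in space and use Fubini's theorem to show that $\int_{\mathbb{R}^N} J\ast\Gamma(u) = \int_{\mathbb{R}^N}\Gamma(u)$ since $\int J = 1$, so the nonlocal term contributes nothing. The only difference is that you spell out the Fubini justifications in more detail (and the mention of Young's inequality is unnecessary), but the argument is the same.
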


\begin{proof} Since $u(t)\in
      \L^1\left(\mathbb{R}^N\right)$ for any $t\geq0$, we integrate
      equation (\ref{solution}) in space:
      \[{}
      \int_{\R^{N}}u(t)=\int_{\R^{N}}f + \int_{0}^{t}
      \bigg(\int_{\R^{N}} J\ast u - \int_{\R^{N}}u\bigg)\ds\,.{}
      \]
      By Fubini's Theorem, $\int J\ast 
      u=\int J\cdot\int u=\int u$ (where the integrals are taken over 
      all $\R^{N}$), which yields the result.
\end{proof}

\noindent {\bf $\L^1$-contraction property for $\L^1$-solutions.}

In order to obtain it, we need first to approximate the graph
$\Gamma(s)$ by a sequence of strictly monotone $\Gamma_n(s)$ such that:

\begin{enumerate} 
     \item[$(i)$] there
      is a constant $L$ independent of $n$ such that
      $|\Gamma_n(s)-\Gamma_n(t)|\le L|s-t|$, for all $n\in\mathbb{N}$;
	
    \item[$(ii)$] for all $n\in\mathbb{N}$, $\Gamma_n(0)=0$ and $\Gamma_n$
	  is strictly increasing on $(-\infty, \infty)$;
	
    \item[$(iii)$] $|\Gamma_n(s)|\le s$, for all $n\in\mathbb{N}$ and $s\ge
	  0$;
	
   \item[$(iv)$] $\Gamma_n\rightarrow \Gamma$ as $n\rightarrow\infty$
	  uniformly in $(-\infty,\infty)$.  
\end{enumerate}

Take for instance  \[ \Gamma_n(s)= \left\{ \begin{array}{ll} (s+1),
    & \mbox{for } s<\frac{-n-1}{n} \smallskip\\
    \displaystyle\frac{s}{n+1},		& \mbox{for } \frac{-n-1}{n}\le
    s \le \frac{n+1}{n} \smallskip\\ (s-1), & \mbox{for } s>
    \frac{n+1}{n}.  \end{array} \right.   \]

Since $\Gamma_n$ is Lipschitz, for any $f\in
\L^1\left(\mathbb{R}^N\right)$ and any $n\in\mathbb{N}$ there exists a
unique $\L^1$-solution $u_n\in \C\left([0,\infty);
  \L^1\left(\mathbb{R}^N\right)\right)$ of the approximate problem
  \begin{equation}\label{aproximate_eq1} \partial_t u_n=J\ast
    \Gamma_n(u_n)-\Gamma_n(u_n) \end{equation} with initial data $u_n(0) =
    f$. The proof is just like the one of Theorem
    \ref{existence_solution_fix_point}. Moreover, $\Gamma(u_n)\in
    \C\left([0,\infty); \L^1\left(\mathbb{R}^N\right)\right)$, and,
      hence, $u_n\in \C^1([0,\infty); \L^1\left(\mathbb{R}^N\right))$.
	Conservation of energy also holds, the calculations are the same 
	as for $\L^{1}$-solutions above.\\

Now we state the $\L^{1}$-contraction property for the approximate 
problem:

\begin{lema} Let $u_{n,1}$ and $u_{n,2}$ be two $\L^1$-solutions of
  \eqref{aproximate_eq1} with initial data   $f_1,\, f_2 \in
  \L^1\left(\mathbb{R}^N\right)$. Then,
  \begin{equation}\label{aproximation_L1_contraction}
    \|(u_{n,1}-u_{n,2})(t)\|_{\L^1\left(\mathbb{R}^N\right)}\le
    \|f_1-f_2\|_{\L^1\left(\mathbb{R}^N\right)}, \quad \forall t\ge 0.
  \end{equation} \end{lema}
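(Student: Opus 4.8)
The plan is to differentiate the $\L^1$-norm of the difference of the two solutions and to exploit the strict monotonicity of $\Gamma_n$ together with the fact that $J$ has unit mass. I would set $w=u_{n,1}-u_{n,2}$ and $z=\Gamma_n(u_{n,1})-\Gamma_n(u_{n,2})$; subtracting the two copies of \eqref{aproximate_eq1} gives the identity $\partial_t w=J\ast z-z$, valid in $\L^1(\R^N)$ for a.e. $t$ since both $u_{n,i}$ lie in $\C^1([0,\infty);\L^1(\R^N))$. The key structural observation is that, because $\Gamma_n$ is strictly increasing by property $(ii)$, the functions $w$ and $z$ carry exactly the same sign pointwise: $\sign(w)=\sign(z)$ a.e., and in particular $z=0$ wherever $w=0$.

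To turn this into an estimate I would multiply by a signum factor and integrate, regularizing to avoid the lack of smoothness of $\sign$ at the origin. Concretely, choose a smooth nondecreasing $p_\varepsilon:\R\to[-1,1]$ with $p_\varepsilon\to\sign$ pointwise as $\varepsilon\to0$, and set $P_\varepsilon(s)=\int_0^s p_\varepsilon$. Using that $t\mapsto w(t)$ is $\C^1$ into $\L^1(\R^N)$ and that $p_\varepsilon$ is bounded and Lipschitz, one integrates in time to obtain
\[
\int_{\R^N}P_\varepsilon(w(t))-\int_{\R^N}P_\varepsilon(w(0))
=\int_0^t\left(\int_{\R^N}p_\varepsilon(w)\,(J\ast z)-\int_{\R^N}p_\varepsilon(w)\,z\right)\ds.
\]
For the convolution term I would use $|p_\varepsilon|\le1$, Young's inequality and $\int J=1$ to bound $\int_{\R^N}p_\varepsilon(w)(J\ast z)\le\int_{\R^N}|J\ast z|\le\int_{\R^N}J\ast|z|=\int_{\R^N}|z|$.

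Finally I would let $\varepsilon\to0$. By dominated convergence (everything dominated by $|z|\in\L^1$) the left-hand side tends to $\|w(t)\|_{\L^1}-\|f_1-f_2\|_{\L^1}$, while $\int_{\R^N}p_\varepsilon(w)z\to\int_{\R^N}\sign(z)z=\int_{\R^N}|z|$, the last identity being exactly where $\sign(w)=\sign(z)$ is used. Combining with the convolution bound, the right-hand side converges to $\int_0^t\big(\int_{\R^N}\sign(w)\,(J\ast z)-\int_{\R^N}|z|\big)\ds$, which is nonpositive, yielding $\|w(t)\|_{\L^1}\le\|f_1-f_2\|_{\L^1}$, i.e.\ \eqref{aproximation_L1_contraction}. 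The main obstacle will be the rigorous handling of the sign multiplication: justifying the time integration against $p_\varepsilon(w)$ and, above all, the passage to the limit in $\int_{\R^N}p_\varepsilon(w)z$, which relies crucially on strict monotonicity of the approximants. This is precisely why the contraction is proved first at the approximate level and only afterwards passed to the limit in $n$: for the limiting graph $\Gamma$ the relation $\sign(w)=\sign(z)$ fails in the mushy region $\{|u|\le1\}$.
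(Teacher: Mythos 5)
Your argument is correct, and it rests on the same two pillars as the paper's proof: the strict monotonicity of $\Gamma_n$, used to identify the sign of $z=\Gamma_n(u_{n,1})-\Gamma_n(u_{n,2})$ with that of $w=u_{n,1}-u_{n,2}$, and the mass-one bound $\int_{\R^N} J\ast|z|=\int_{\R^N}|z|$ via Fubini/Young. The difference is one of implementation: the paper multiplies the subtracted equation by the indicator $\ind{\{u_{n,1}>u_{n,2}\}}$, shows that $\frac{\mathrm{d}}{\dt}\int_{\R^N}(u_{n,1}-u_{n,2})_+\le 0$, and then repeats the computation for the negative part, whereas you regularize $\sign$ by $p_\varepsilon$ and estimate $\int_{\R^N}|w|$ in one pass. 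Your route is analytically cleaner at one point (the chain rule for $t\mapsto\int_{\R^N} P_\varepsilon(w(t))$ along a $\C^1$ curve in $\L^1$ is standard, while the identity $\partial_t w\,\ind{\{w>0\}}=\partial_t w_+$ deserves a word of justification), but it proves slightly less: the paper's split delivers the order contraction $\|(u_{n,1}-u_{n,2})_+(t)\|_{\L^1(\R^N)}\le\|(f_1-f_2)_+\|_{\L^1(\R^N)}$, which is exactly what Corollary~\ref{contract_prop_L_1} invokes when it asserts the contraction ``for the positive/negative parts''; to recover it in your framework, rerun the same argument with $p_\varepsilon$ approximating $\sign_+$, i.e.\ $P_\varepsilon\to s_+$. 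One small inaccuracy in your closing commentary: the limit $\int_{\R^N} p_\varepsilon(w)z\to\int_{\R^N}|z|$ only needs the identity $\sign(w)\,z=|z|$ a.e., and this survives for the non-strictly-monotone graph $\Gamma$ (a nondecreasing graph never lets $w$ and $z$ take strictly opposite signs, and $z=0$ wherever $w=0$), so the mushy region is not the obstruction you suggest; the detour through $\Gamma_n$ is inherited from \cite{BCQ2012} rather than forced by this particular step.
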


\begin{proof}
	The proof is done in \cite[Lem 2.4]{BCQ2012}: we begin by proving 
	a contraction property for the positive part 
	$(u_{n,1}-u_{n,2})_{+}$. To do so, we subtract the equations 
	for $u_{n,1}$ and $u_{n,2}$ and multiply by $\mathds{1}_{\{u_{n,1}>u_{n,2}\}}$. 
	Since $u_{n,1}-u_{n,2}\in\C^1([0,\infty);\L^1(\mathbb{R}^N))$, then 
    $$
    \partial_t (u_{n,1}-u_{n,2})\mathds{1}_{\{u_{n,1}>u_{n,2}\}}=\partial_t (u_{n,1}-u_{n,2})_+.
    $$
    On the other hand, since $0\leq\mathds{1}_{\{u_{n,1}>u_{n,2}\}}\leq 1$, we have
    $$
    J\ast(\Gamma_n(u_{n,1})-\Gamma_n(u_{n,2}))\mathds{1}_{\{u_{n,1}>u_{n,2}\}}
    \leq J\ast(\Gamma_n(u_{n,1})-\Gamma_n(u_{n,2}))_+.
    $$
    Finally, since $\Gamma_n$ is strictly monotone,
    $\mathds{1}_{\{u_{n,1}>u_{n,2}\}}=\mathds{1}_{\{\Gamma_n(u_{n,1})>\Gamma_n(u_{n,2})\}}$.
    Thus,
    $$
    (\Gamma_n(u_{n,1})-\Gamma_n(u_{n,2}))\mathds{1}_{\{u_{n,1}>u_{n,2}\}}=(\Gamma_n(u_{n,1})-\Gamma_n(u_{n,2}))_+
    .
    $$
    We end up with
    $$
    \partial_t (u_{n,1}-u_{n,2})_+\leq J\ast(\Gamma_n(u_{n,1})-\Gamma_n(u_{n,2}))_+-
    (\Gamma_n(u_{n,1})-\Gamma_n(u_{n,2}))_+.
    $$
    Integrating in space, and using Fubini's Theorem, which can be
    applied, since
    $(\Gamma_n(u_{n,1}(t))-\Gamma_n(u_{n,2}(t)))_+\in\L^1(\R^N)$, we get
    $$
    \partial_t \int_{\mathbb{R}^N}(u_{n,1}-u_{n,2})_+(t)\le0\,,
    $$
    which implies $$\int_{\R^{N}}(u_{n,1}-u_{n,2})_{+}\dx\leq
    \int_{\R^{N}}(f_{1}-f_{2})_{+}\dx\,.$$
    Then, a similar computation gives the contraction for the negative 
    parts, so that the $\L^{1}$-contraction holds.
\end{proof}

Then we deduce the $\L^1$-contraction property for the original 
problem after passing to the limit:

\begin{corolario}\label{contract_prop_L_1} Let $u_1$ and $u_2$ be two $\L^1$-solutions of
  \eqref{eq1} with initial data $f_1,\, f_2\in \L^1
  \left(\mathbb{R}^N\right)$. Then for every $t\ge 0$,
  \begin{equation}\label{L1_contraction}
    \|(u_{1}-u_{2})(t)\|_{\L^1\left(\mathbb{R}^N\right)}\le
    \|f_1-f_2\|_{\L^1\left(\mathbb{R}^N\right)}\,, 
    \end{equation}
    and the same result holds for the positive/negative parts of 
    $(u_{1}-u_{2})$.
  \end{corolario}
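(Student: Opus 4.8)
The plan is to obtain \eqref{L1_contraction} by approximation. For $i=1,2$ let $u_{n,i}$ denote the $\L^1$-solution of the regularized problem \eqref{aproximate_eq1} with datum $f_i$, for which the contraction \eqref{aproximation_L1_contraction} is already available from the preceding Lemma, both for the full difference and for its positive and negative parts. It therefore suffices to prove that $u_{n,i}(t)\to u_i(t)$ in $\L^1(\R^N)$ as $n\to\infty$, uniformly on compact time intervals, where $u_i$ is the unique $\L^1$-solution of \eqref{eq1} with datum $f_i$ furnished by Theorem~\ref{existence_solution_fix_point}. Once this convergence holds, we may pass to the limit in \eqref{aproximation_L1_contraction} and in its signed counterpart.

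To prove the convergence I would compare each $u_{n}:=u_{n,i}$ directly with the true solution $u:=u_i$ through the integral formulation. Subtracting \eqref{solution} from the analogous identity for $u_n$ gives
$$
u_n(t)-u(t)=\int_0^t\Big(J\ast\big(\Gamma_n(u_n)-\Gamma(u)\big)-\big(\Gamma_n(u_n)-\Gamma(u)\big)\Big)\ds.
$$
Taking $\L^1$ norms, using $\|J\ast g\|_{\L^1}\le\|g\|_{\L^1}$ (since $\int J=1$), and the splitting $\Gamma_n(u_n)-\Gamma(u)=\big(\Gamma_n(u_n)-\Gamma_n(u)\big)+\big(\Gamma_n(u)-\Gamma(u)\big)$ together with the uniform Lipschitz bound $(i)$, $|\Gamma_n(u_n)-\Gamma_n(u)|\le L|u_n-u|$, one obtains
$$
\|u_n(t)-u(t)\|_{\L^1}\le 2L\int_0^t\|u_n(s)-u(s)\|_{\L^1}\ds+2\int_0^t\|\Gamma_n(u(s))-\Gamma(u(s))\|_{\L^1}\ds.
$$
Gr\"onwall's lemma then reduces the matter to showing that the consistency term $\int_0^t\|\Gamma_n(u(s))-\Gamma(u(s))\|_{\L^1}\ds$ tends to $0$; the a priori bound $\|u(s)\|_{\L^1}\le\|f_i\|_{\L^1}e^{2s}$, obtained from \eqref{solution} and Gr\"onwall using $|\Gamma(s)|\le|s|$, keeps every quantity finite on $[0,T]$ (and gives the same bound for $u_n$ via $|\Gamma_n(s)|\le|s|$).

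The main obstacle is precisely this consistency estimate, since the bare uniform convergence $(iv)$ does not immediately give $\L^1$ convergence of the compositions because $\{u(s)\neq0\}$ may have infinite measure. Here I would exploit the explicit form of $\Gamma_n$: the difference $\Gamma_n-\Gamma$ vanishes outside $\{|s|\le1+1/n\}$ and is of order $1/n$ there. Splitting the integral over $\{0<|u|\le1\}$, where $\Gamma(u)=0$ and $|\Gamma_n(u)|=|u|/(n+1)$, and over $\{1<|u|\le1+1/n\}$, whose measure is $\le\|u(s)\|_{\L^1}$ by Chebyshev's inequality, yields $\|\Gamma_n(u(s))-\Gamma(u(s))\|_{\L^1}\le \tfrac{1}{n+1}\|u(s)\|_{\L^1}+\|\Gamma_n-\Gamma\|_{\infty}\|u(s)\|_{\L^1}\to0$, uniformly for $s\in[0,T]$ by the a priori bound. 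This gives $u_{n,i}\to u_i$ in $\C([0,T];\L^1(\R^N))$. Finally, since $g\mapsto g_{\pm}$ is $1$-Lipschitz on $\L^1$, the convergence of $u_{n,i}(t)$ transfers to $(u_{n,1}-u_{n,2})_{\pm}(t)\to(u_1-u_2)_{\pm}(t)$ in $\L^1$, so integrating and passing to the limit in \eqref{aproximation_L1_contraction} and its signed version yields \eqref{L1_contraction} together with the statement for the positive and negative parts.
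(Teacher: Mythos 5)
Your proof is correct, but it takes a genuinely different route from the paper. The paper disposes of this corollary by passing to the limit in the approximate problems via a compactness argument (the Fr\'echet--Kolmogorov criterion), simply citing \cite[Cor. 2.5]{BCQ2012} for the details. You instead prove strong convergence $u_{n,i}\to u_i$ in $\C([0,T];\L^1(\R^N))$ directly, by a Gr\"onwall stability estimate on the integral formulation combined with a consistency estimate for $\Gamma_n(u)-\Gamma(u)$. The one genuinely delicate point --- that uniform convergence $\Gamma_n\to\Gamma$ does not by itself give $\L^1$ convergence of the compositions on a set of infinite measure --- you handle correctly by exploiting the explicit form of $\Gamma_n$: the difference vanishes for $|s|>1+1/n$, is $|s|/(n+1)$ on $\{|s|\le 1\}$ (hence controlled by $\|u(s)\|_{\L^1}/(n+1)$), and on the intermediate set $\{1<|u|\le 1+1/n\}$ you use Chebyshev to bound the measure by $\|u(s)\|_{\L^1}$. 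Your appeal to the positive/negative-part version of \eqref{aproximation_L1_contraction} is legitimate, since the proof of the preceding lemma establishes exactly that, and the $1$-Lipschitz character of $g\mapsto g_\pm$ on $\L^1$ lets you pass to the limit in the signed inequalities as well. What your approach buys is a self-contained, quantitative argument: convergence of the whole sequence with an explicit rate, no subsequence extraction, and no need to verify tightness and equicontinuity in $\L^1$. What it costs is reliance on the particular approximating sequence $\Gamma_n$ written in the paper (the abstract properties $(i)$--$(iv)$ alone would not suffice for your consistency estimate), whereas the compactness route is insensitive to the precise form of the regularization.
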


\begin{proof}
	Passing to the limit in the approximated problems requires some 
	compactness argument which is obtained through the
	Fréchet-Kolmogorov criterium. The details are 
	in \cite[Cor. 2.5]{BCQ2012}, and do not depend on the specific 
	form of the function $\Gamma(\cdot)$ so we skip the proof.
\end{proof}

The following Lemma shows that the positive and negative parts of
$\Gamma(u)$ are subcaloric:
\begin{lema}\label{gamma_subcaloric} Let $f \in
	  \L^1\left(\mathbb{R}^N\right)$ and $u$ the corresponding
	  $\L^1$-solution. Then the functions $\big(\Gamma(u)\big)_-\,,\
	  \big(\Gamma(u)\big)_+$ and $|\Gamma(u)|$ all satisfy the inequality:
	  \begin{equation*} \chi_t\le J\ast \chi-\chi\quad \text{a.e. in
	    }\mathbb{R}^N \times (0,\infty)\,.  \end{equation*} 
 \end{lema}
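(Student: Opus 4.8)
The plan is to prove the inequality first for $(\Gamma(u))_+$, then for $(\Gamma(u))_-$ by a symmetric argument, and finally to obtain the statement for $|\Gamma(u)|$ by addition. The starting observation is that the positive and negative parts of $v=\Gamma(u)$ can be written explicitly in terms of $u$: since $\Gamma(u)=\sign(u)(|u|-1)_+$, one has $(\Gamma(u))_+=(u-1)_+$ and $(\Gamma(u))_-=(-u-1)_+$, and in particular $\{v>0\}=\{u>1\}$ while $\{v<0\}=\{u<-1\}$.

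First I would fix the differentiability framework. Since $u\in\C^1([0,\infty);\L^1(\R^N))$ with $u_t=J\ast v-v$, writing $u(t)=f+\int_0^t u_t(s)\ds$ and applying Fubini shows that for a.e. $x$ the map $t\mapsto u(x,t)$ is absolutely continuous with $\partial_t u(x,t)=u_t(x,t)$ for a.e. $t$. Consequently $t\mapsto(u(x,t)-1)_+$ is also absolutely continuous, and the standard chain rule for the positive part gives
\begin{equation*}
\partial_t (\Gamma(u))_+=\partial_t (u-1)_+=\ind{\{u>1\}}\,u_t=\ind{\{u>1\}}\big(J\ast v-v\big)\quad\text{a.e.}
\end{equation*}

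The key algebraic step uses the sign of $J$. On the set $\{u>1\}=\{v>0\}$ we have $v=(\Gamma(u))_+$, so the term $-\ind{\{u>1\}}v$ equals $-(\Gamma(u))_+$ everywhere, since it already vanishes where $u\le 1$. For the convolution term I would use that $v=v_+-v_-$ with $v_-\ge 0$ and $J\ge 0$, whence $J\ast v\le J\ast v_+=J\ast(\Gamma(u))_+$; combining $\ind{\{u>1\}}(J\ast v)\le (J\ast v)_+\le J\ast(\Gamma(u))_+$ with the fact that $J\ast(\Gamma(u))_+\ge 0$ where the indicator vanishes, we conclude
\begin{equation*}
\partial_t (\Gamma(u))_+\le J\ast(\Gamma(u))_+-(\Gamma(u))_+\quad\text{a.e.}
\end{equation*}

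The argument for $(\Gamma(u))_-=(-u-1)_+$ is symmetric: multiplying $u_t=J\ast v-v$ by $\ind{\{u<-1\}}$, using $\partial_t(-u-1)_+=-\ind{\{u<-1\}}u_t$, that $-v=(\Gamma(u))_-$ on $\{u<-1\}$, and the bound $-J\ast v\le J\ast v_-=J\ast(\Gamma(u))_-$ coming again from $J\ge0$, yields the same inequality with $(\Gamma(u))_-$ in place of $(\Gamma(u))_+$. Adding the two inequalities and using $|\Gamma(u)|=(\Gamma(u))_++(\Gamma(u))_-$ together with the linearity of the convolution gives the result for $|\Gamma(u)|$. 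The only delicate point is the justification of the pointwise chain rule for the positive part under the sole regularity $u\in\C^1([0,\infty);\L^1)$; if one prefers to avoid it, the same computations can be carried out first on the approximate problem \eqref{aproximate_eq1}, where $\Gamma_n$ is $C^1$ and strictly monotone so that $\partial_t(\Gamma_n(u_n))_\pm$ is given by the classical chain rule, and then one passes to the limit $n\to\infty$ using Corollary \ref{contract_prop_L_1}.
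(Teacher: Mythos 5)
Your proof is correct and follows essentially the same route as the paper: a pointwise chain rule for the positive/negative part of $u$ in time, the observation that the zero-order term $-\chi$ matches exactly on the relevant set while $J\ast\chi\ge 0$ off it, and the bound $\ind{}(J\ast v)\le J\ast v_{\pm}$ coming from $J\ge 0$. The only (cosmetic) difference is that you treat $(\Gamma(u))_{\pm}$ first and add to get $|\Gamma(u)|$, whereas the paper computes $|\Gamma(u)|_t=\sign(u)u_t$ directly; your extra care in justifying the a.e.\ chain rule via absolute continuity of $t\mapsto u(x,t)$ is a welcome refinement of a step the paper leaves implicit.
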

 \begin{proof} We do the computation for $\chi=|\Gamma(u)|$, with the
      proof being the same for the other functions. Since $u \in
      \C^1([0,\infty); \L^1\left(\mathbb{R}^N\right))$, we have, \[
	\begin{array}{ll} \left|\Gamma(u)\right|_t  	&	=
	  \left(\left( |u|-1\right)_+\right)_t\\ &
	  =\,\sign(u)\,u_t\\ &
	  =\sign(u)\,J\ast\Gamma(u)-\sign(u)\Gamma(u)\quad\mbox{a.e.}
	\end{array} \] On the set $\{|u|\le 1\}$ we have
	$\left|\Gamma(u)\right|=\left|\Gamma(u)\right|_t\!=\!0$ while $0\le J \ast
	\left|\Gamma(u)\right| $, so that the following inequality 
	necessarily holds:
	$$\left|\Gamma(u)\right|_t \leq
	    J\ast \left|\Gamma(u)\right|-\left|\Gamma(u)\right|.$$
	On the set $\{|u|>1\}$, using that $|\sign(u)|=1$ we get also
	\[
	  \begin{array}{ll} \left|\Gamma(u)\right|_t 	&
	    =\sign(u)J\ast\Gamma(u)-\sign(u)\Gamma(u)\\ & 	\le
	    J\ast \left|\Gamma(u)\right|-\left|\Gamma(u)\right|.
	  \end{array} \] 
	 Hence in any case, we obtain the result.
\end{proof}

This property allows to estimate the size of the solution in terms of
the $\L^{\infty}$-norm of the initial data. 
\begin{lema} \label{lem.comp.linfty}
	Let $f \in \L^1\left(\mathbb{R}^N\right) \cap
  \L^{\infty}\left(\mathbb{R}^N\right)$. Then the $\L^1$- solution $u$
  of \eqref{eq1} satisfies $\|u(t)\|_{\L^{\infty}(\mathbb{R}^N)} \le
  \|f\|_{\L^{\infty}(\mathbb{R} ^N)}$ for any $t > 0$. Moreover, \[\lim
    \sup _{t\to\infty} u(t) \le 1 \;\mbox{ and } \;\lim \inf
    _{t\to\infty} u(t) \ge -1 \mbox{ a.e. in } \mathbb{R}^N.\]
\end{lema}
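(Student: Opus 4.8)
The plan is to control the single quantity $|\Gamma(u)|=(|u|-1)_+$, since both assertions merely quantify how far $u$ can stray outside the interval $[-1,1]$. The engine is Lemma~\ref{gamma_subcaloric}: each of $(\Gamma(u))_\pm$ and $|\Gamma(u)|$ is a subsolution of the \emph{linear} nonlocal heat equation $\phi_t=J\ast\phi-\phi$, whose order-preserving, mass-preserving semigroup I denote by $S(t)$. First I would record the elementary comparison principle: if $\chi_t\le J\ast\chi-\chi$ and $\phi_t=J\ast\phi-\phi$ with $\chi(0)\le\phi(0)$, then $\chi(t)\le\phi(t)$ for all $t\ge0$. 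This follows by setting $\psi=\chi-\phi$, observing $\psi_t\le J\ast\psi-\psi$, multiplying by $\ind{\{\psi>0\}}$ and integrating: since $J\ast\psi\le J\ast\psi_+$, $\ind{\{\psi>0\}}\le1$ and $\int_{\R^N}J\ast\psi_+=\int_{\R^N}\psi_+$, one gets $\frac{d}{dt}\int_{\R^N}\psi_+\le0$, whence $\psi_+\equiv0$ because $\psi_+(0)=0$. All integrands lie in $\L^1(\R^N)$ for each fixed $t$, which is what makes the use of Fubini and this computation licit, exactly as in Lemma~\ref{gamma_subcaloric} and Corollary~\ref{contract_prop_L_1}.

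For the $\L^{\infty}$ bound, write $M=\|f\|_{\L^{\infty}}$. If $M\le1$ then $\Gamma(f)=0$, so $u\equiv f$ is an $\L^1$-solution and, by the uniqueness in Theorem~\ref{existence_solution_fix_point}, it is \emph{the} solution; hence $\|u(t)\|_{\L^{\infty}}=M$. If $M>1$, I would apply the comparison principle with $\chi=|\Gamma(u)|$ against the constant stationary solution $\phi\equiv M-1$: since $\chi(0)=(|f|-1)_+\le M-1$ and the constant solves the linear equation (because $\int J=1$), I obtain $(|u(t)|-1)_+=|\Gamma(u(t))|\le M-1$ for all $t$. A.e.\ in $\R^N$ this forces either $|u(t)|\le1\le M$ or $|u(t)|\le M$, i.e.\ $\|u(t)\|_{\L^{\infty}}\le M$.

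For the asymptotics I would keep the comparison but replace the constant barrier by the decaying solution of the linear equation. Applying the comparison principle to the subsolution $(\Gamma(u))_+=(u-1)_+$ with datum $(\Gamma(f))_+\in\L^1(\R^N)\cap\L^{\infty}(\R^N)$ gives $(u(t)-1)_+\le S(t)(\Gamma(f))_+$, and symmetrically $(-u(t)-1)_+\le S(t)(\Gamma(f))_-$. The decisive input is then the $\L^{\infty}$-decay of the linear nonlocal diffusion: for $g\in\L^1(\R^N)\cap\L^{\infty}(\R^N)$ one has $\|S(t)g\|_{\L^{\infty}}\to0$ as $t\to\infty$, indeed $\|S(t)g\|_{\L^{\infty}}\le e^{-t}\|g\|_{\L^{\infty}}+Ct^{-N/2}\|g\|_{\L^1}$, obtained by splitting $S(t)$ into its singular part $e^{-t}\mathrm{Id}$ and its regularizing part. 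Feeding this in yields $(u(t)-1)_+\to0$ and $(-u(t)-1)_+\to0$ uniformly, which is precisely $\limsup_{t\to\infty}u(t)\le1$ and $\liminf_{t\to\infty}u(t)\ge-1$ a.e.\ in $\R^N$.

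The routine steps are the sign bookkeeping $(\Gamma(u))_+=(u-1)_+$, $(\Gamma(u))_-=(-u-1)_+$ and the Fubini/energy manipulations already used above. The one genuine analytic ingredient is the $t^{-N/2}$ decay of $S(t)$ on $\L^1$ data; this is the step I expect to be the main obstacle, though in the present context it is a standard property of the nonlocal heat equation and is already available in the framework of \cite{BCQ2012}, so I would cite it rather than reprove it. A minor point requiring care is that the constant barrier $M-1$ is not integrable; the comparison argument nonetheless survives because only $(\chi-(M-1))_+$, dominated by $\chi\in\L^1(\R^N)$, is integrated.
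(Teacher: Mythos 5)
Your proposal is correct and follows essentially the same route as the paper: the trivial case $\|f\|_{\L^\infty}\le 1$, then comparison of the subcaloric function $|\Gamma(u)|$ (Lemma~\ref{gamma_subcaloric}) with constants for the $\L^\infty$ bound, and with the linear nonlocal semigroup started from $|\Gamma(f)|$ combined with its $t^{-N/2}$ decay (Ignat--Rossi) for the asymptotics. The only cosmetic difference is that the paper routes the constant comparison through the intermediate solution $V$ with data $|\Gamma(f)|$, whereas you compare $|\Gamma(u)|$ with the constant $M-1$ directly; both are covered by the same $\L^\infty$ comparison principle.
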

\begin{proof}
	The proof follows the same arguments as in \cite[Lem 2.7]{BCQ2012}:
    first, the result is obvious if $\|f\|_{\L^\infty(\R^N)}\leq 1$, 
    since in this case $u(t)=f$ for any $t>0$. So let us assume that 
    $\|f\|_{\L^\infty(\R^N)}>1$. Since $\chi=|\Gamma(u)|$ is subcaloric 
    (by Lemma \ref{gamma_subcaloric}), 
    we may compare it with the solution $V$ of the following problem:
    $$
    V_t=J\ast V -V,\quad V(0)=|\Gamma(f)|\in\L^1(\mathbb{R}^N)\cap\L^\infty(\mathbb{R}^N).
    $$
    We first use the comparison principle in $\L^{\infty}$ (see
    \cite[Prop. 3.1]{BrandleChasseigneFerreira2011}) with constants 
    (which are solutions):
    $0\leq V(t)\leq\|V(0)\|_{\infty}=\|\Gamma(f)\|_{\infty}$. 
    Now, using again the comparison principle for bounded sub/super 
    solutions, we obtain  
    $$0\leq 
    \|\chi(t)\|_{\L^\infty(\R^N)}\leq 
    \|V(t)\|_{\L^\infty(\R^N)}\leq \|\Gamma(f)\|_{\L^\infty(\R^N)}= 
    \|f\|_{\L^\infty(\R^N)}-1.$$ 
    Therefore,  $\|u(t)\|_{\L^\infty(\R^N)}\leq 
    1+\|\chi(t)\|_{\L^\infty(\R^N)}\leq\|f\|_{\L^\infty(\R^N)}$. 
    Moreover, using the results from~\cite{IgnatRossi}, we obtain that 
    $V$, and hence the solution $v$, goes to zero asymptotically like 
    $ct^{-N/2}$, so that $\Gamma(u)\to0$ almost everywhere, which implies 
    the result.
\end{proof}


\subsection{BC theory.}

We now develop a theory in the class  
$\mathrm{BC}\left(\mathbb{R}^N\right)$ of continuous and bounded functions
whenever the initial data $f$ belongs to that class.

\begin{definition} Let $f \in \mathrm{BC}\left(\mathbb{R}^N\right)$. The function
  $u$ is a $\mathrm{BC}$-solution of \eqref{eq1} if $u\in
  \mathrm{BC}\left(\mathbb {R}^N \times [0, T ]\right)$ for all $T\in (0,\infty)$
  and  \[ u(x, t) = f(x)+\displaystyle\displaystyle\int_0^t \left(J \ast
    \Gamma(u)(x, s) - \Gamma(u)(x,s)\right)\ds,  \] for all $x\in
    \mathbb{R}^N$ and $t\in [0,\infty)$.  
 \end{definition}

In particular, a BC-solution $u$ is continuous in $[0,\infty)\times\R^{N}$ 
and $u_t$ is also continuous in $(0,\infty)\times\R^{N}$.  Hence
equation \eqref{eq1} is satisfied for all $x$ and $t$, and $u$ is a
classical solution. 

\begin{teo}\label{fix_point_bc} For any $f \in
  \mathrm{BC}\left(\mathbb{R}^N\right)$ there exists a unique 
  $\mathrm{BC}$-solution of \eqref{eq1}.  
\end{teo}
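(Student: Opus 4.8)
The plan is to mirror the fixed-point argument already used for Theorem~\ref{existence_solution_fix_point}, but now working in the space of bounded continuous functions rather than $\L^1$. First I would fix $t_0>0$ to be chosen and introduce the Banach space $\mathcal{X}_{t_0}=\mathrm{BC}\left(\R^N\times[0,t_0]\right)$ endowed with the sup-norm $\|u\|_{\infty}=\sup_{\R^N\times[0,t_0]}|u|$. For $f\in\mathrm{BC}(\R^N)$ I would define the operator
\[
\left(\mathcal{T}_f u\right)(x,t)=f(x)+\int_0^t\left(J\ast\Gamma(u)(x,s)-\Gamma(u)(x,s)\right)\ds,
\]
and the first task is to check that $\mathcal{T}_f$ maps $\mathcal{X}_{t_0}$ into itself: since $\Gamma$ is globally Lipschitz with $\Gamma(0)=0$, and $J\in\L^1$ with $\int J=1$, the map $w\mapsto J\ast w-w$ sends bounded functions to bounded functions, the convolution of a bounded continuous function with an $\L^1$ kernel is again bounded and continuous, and the time integral of a bounded continuous integrand is continuous in $(x,t)$; hence $\mathcal{T}_f u\in\mathrm{BC}\left(\R^N\times[0,t_0]\right)$.

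Next I would establish the contraction estimate. Using $|\Gamma(u)-\Gamma(v)|\le |u-v|$ (the Lipschitz constant of $\Gamma$ here being $1$) together with $\|J\ast w\|_{\infty}\le\|w\|_{\infty}$, I get for any $u,v\in\mathcal{X}_{t_0}$
\[
\left|\left(\mathcal{T}_f u-\mathcal{T}_f v\right)(x,t)\right|\le\int_0^{t_0}\left(\|J\ast(\Gamma(u)-\Gamma(v))(s)\|_{\infty}+\|(\Gamma(u)-\Gamma(v))(s)\|_{\infty}\right)\ds\le 2t_0\,\|u-v\|_{\infty}.
\]
Therefore, choosing $t_0<1/2$ makes $\mathcal{T}_f$ a strict contraction on $\mathcal{X}_{t_0}$, and Banach's fixed-point theorem yields a unique $\mathrm{BC}$-solution on $[0,t_0]$. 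Exactly as in Theorem~\ref{existence_solution_fix_point}, the length $t_0$ of the interval does not depend on the initial data, only on the Lipschitz constant of $\Gamma$, so I would iterate: taking $u(\cdot,t_0)$ as new initial datum (it lies in $\mathrm{BC}(\R^N)$), repeat the construction on $[t_0,2t_0]$, and continue. Matching the solutions at the endpoints gives a global solution in $\mathrm{BC}\left(\R^N\times[0,T]\right)$ for every $T$, with uniqueness inherited on each subinterval.

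I expect no deep obstacle here, since the structure is identical to the $\L^1$ case; the one point that needs genuine (if routine) care is verifying that $\mathcal{T}_f$ preserves \emph{continuity} in the pair $(x,t)$, not merely boundedness. The continuity in $x$ follows from the fact that $J\ast\Gamma(u)$ is continuous whenever $\Gamma(u)$ is bounded and continuous (uniform continuity of the convolution against an $\L^1$ kernel), and continuity in $t$ follows from the integrand being bounded so that the time-integral term is Lipschitz in $t$ uniformly in $x$. A small but worthwhile remark is that the iteration argument requires the solution at time $t_0$ to belong again to $\mathrm{BC}(\R^N)$, which is automatic from the fixed-point construction. With these checks in place, existence and uniqueness of the global $\mathrm{BC}$-solution follow, and as noted after the definition, such a solution is in fact a classical solution. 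I would cite \cite{BCQ2012} for the parallel one-phase construction and only spell out the contraction estimate, the rest being a standard adaptation.
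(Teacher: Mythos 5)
Your proposal is correct and follows essentially the same route as the paper: a Banach fixed-point argument for $\mathcal{T}_f$ on $\mathrm{BC}(\R^N\times[0,t_0])$ with the sup norm, contraction for $t_0<1/2$ via the Lipschitz bound on $\Gamma$ and $\|J\ast w\|_\infty\le\|w\|_\infty$, and then iteration since $t_0$ is independent of the data. The extra care you take in verifying that $\mathcal{T}_f$ preserves continuity in $(x,t)$ is a detail the paper leaves implicit, but it does not change the argument.
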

\begin{proof}
 The proof is obtained through a fixed-point argument exactly as for 
 $\L^{1}$-solutions, except that we consider the operator 
 $\mathcal{T}_{f}$ as acting from $\mathrm{BC}([0,t_{0}]\times\R^{N})$ 
 into $\mathrm{BC}([0,t_{0}]\times\R^{N})$. The estimates are done 
 using the sup norm in space and time instead of the sup of the 
 $L^{1}$-norm but the result is the same: if $t_{0}$ is small enough, 
 then we have a contractive operator which allows to construct a 
 unique solution on $[0,t_{0}]$. The we iterate the process to get a 
 bounded and continuous solution on $[0,T]\times\R^{N}$ for any $T>0$.
 \end{proof}

\noindent Notice that BC-solutions depend continuously on the initial 
data, on any finite time interval:
\begin{lema}\label{cota_inf_BC} Let $u_1$ and $u_2$ be the BC-solutions
  with initial data respectively $f_1,\, f_2 \in BC\left(\mathbb
  {R}^N\right)$. Then, for all $T \in (0,\infty)$ there exists a
  constant $C = C(T )$ such that \[ \max\limits_{x\in\mathbb{R}^N}|u_1 -
    u_2|(x, t) \le C(T)\max\limits_{x\in\mathbb{R}^N} |f_1 - f_2|
    (x),\quad t \in [0, T ].  \] \end{lema}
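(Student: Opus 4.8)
The plan is to exploit the integral formulation of the BC-solutions together with the Lipschitz continuity of $\Gamma$ and a Gronwall-type argument. First I would subtract the two integral identities defining $u_1$ and $u_2$: writing $w=u_1-u_2$, we have
\[
w(x,t) = (f_1-f_2)(x) + \int_0^t \Big(J\ast\big(\Gamma(u_1)-\Gamma(u_2)\big)(x,s) - \big(\Gamma(u_1)-\Gamma(u_2)\big)(x,s)\Big)\ds.
\]
Let $M(t)=\max_{x\in\R^N}|w(x,t)|$, which is finite since $u_1,u_2\in\mathrm{BC}(\R^N\times[0,T])$. Taking absolute values and the supremum in $x$, and using that $\Gamma$ is $1$-Lipschitz so that $|\Gamma(u_1)-\Gamma(u_2)|\le|w|$, together with $\int_{\R^N}J=1$ (which gives $\sup_x|J\ast g|\le\|g\|_\infty$), I would bound the integrand by $2M(s)$.

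This yields the estimate
\[
M(t)\le M(0) + 2\int_0^t M(s)\ds,\qquad t\in[0,T].
\]
The continuity of $w$ in $(x,t)$ (guaranteed by the BC-solution framework) ensures $M$ is a well-defined, measurable, and in fact continuous function of $t$, so Gronwall's inequality applies and gives $M(t)\le M(0)\,e^{2t}$. Setting $C(T)=e^{2T}$ then produces exactly the claimed bound, since $M(0)=\max_x|f_1-f_2|(x)$ and $M(t)\le C(T)M(0)$ for all $t\in[0,T]$.

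The only delicate point is justifying that one may pass the supremum over $x$ inside the estimate cleanly, i.e. that the bound on the integrand holds uniformly and that $M$ is regular enough to feed into Gronwall. This is handled by the fact, noted just before the statement, that a BC-solution is continuous on $[0,\infty)\times\R^N$ with $u_t$ continuous on $(0,\infty)\times\R^N$, so the integral identity holds pointwise for every $x$ and the map $s\mapsto M(s)$ is continuous; no measurability subtleties arise. I expect this regularity bookkeeping to be the main (though mild) obstacle, the rest being the routine Lipschitz-plus-Gronwall computation.
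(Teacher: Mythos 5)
Your proof is correct and is essentially the standard argument: the paper itself gives no proof and simply refers to \cite[Lem.~2.10]{BCQ2012}, where the same Lipschitz-of-$\Gamma$ plus $\|J\ast g\|_\infty\le\|g\|_\infty$ plus Gronwall estimate is carried out. Your remark on the continuity of $t\mapsto M(t)$ (which follows from the integral identity and the uniform bound on the integrand) adequately handles the only delicate point.
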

    
\begin{proof}
	See \cite[Lem 2.10]{BCQ2012}.
\end{proof} 


\subsection{Free boundaries} In the sequel, unless we say explicitly
something different, we will be dealing with $\L^1$-solutions. Since the
functions we are handling are in general not continuous in the space
variable, their support has to be considered in the distributional
sense. To be precise, for any locally integrable and nonnegative
function $g$ in $\mathbb{R}^N$, we can consider the distribution $T_g$
associated to the function $g$. Then the distributional support of $g$,
$\suppD(g)$ is defined as the support of $T_g$:

\noindent$\suppD(g)\, := \mathbb{R}^N\! \setminus\! \mathcal{O}\mbox{,
where } \mathcal{O} \subset  \mathbb{R}^N$ is the biggest open set such
that $\left.T_g\right|\mathcal{O}=0$.

\noindent In the case of nonnegative functions $g$, this means that $x
\in \suppD(g)$ if and only if  \[ \forall \varphi\in  \C^{\infty}_c
  (\mathbb{R}^N),\; \varphi \ge 0 \mbox{ and } \varphi(x)>0, \; \mbox{
  happens that
}\,\displaystyle\displaystyle\int_{\mathbb{R}^N}g(y)\varphi(y) dy > 0.
\] If $g$ is continuous, then the support of $g$ is nothing but the
usual closure of the positivity set, $\suppD(g) =\overline{ \{g >
0\}}$.\\

We first prove that the solution does not move far away from the support
of $\Gamma(u)$.  \begin{lema}\label{soporte_u_t} Let $f \in
  \L^1\left(\mathbb{R}^N\right)$. Then, $\suppD(u_t(t)) \subset
  \suppD(\Gamma(u)(t)) + B_{R_J}$ for any $t > 0$.  \end{lema}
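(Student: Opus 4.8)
The plan is to unwind the definition of $u_t$ from the equation and track how convolution enlarges supports. From \eqref{eq1} we have $u_t = J \ast \Gamma(u) - \Gamma(u)$ pointwise (a.e. in $x$, for each $t>0$, as noted in the remark). So the statement reduces to controlling the distributional support of the right-hand side. The two terms must be handled separately and then combined: clearly $\suppD(\Gamma(u)(t)) \subset \suppD(\Gamma(u)(t))$, and the real content is the claim that $\suppD(J \ast \Gamma(u)(t)) \subset \suppD(\Gamma(u)(t)) + B_{R_J}$.

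First I would establish the convolution support inclusion. Since $\supp(J) = B_{R_J}$, if a test function $\varphi \ge 0$ is supported away from $\suppD(\Gamma(u)(t)) + B_{R_J}$, then for every $y$ in the support of $\Gamma(u)(t)$ the translated kernel $J(\cdot - y)$ has support contained in $y + B_{R_J}$, which stays inside $\suppD(\Gamma(u)(t)) + B_{R_J}$ and hence misses the support of $\varphi$. Pairing $J \ast \Gamma(u)(t)$ against $\varphi$ and applying Fubini to rewrite the pairing as an integral of $\Gamma(u)(t,y)$ against $(J \ast \varphi)(y)$ (or $\int J(x-y)\varphi(x)\dx$) then shows the integral vanishes, giving the inclusion. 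I would phrase this carefully in the distributional sense since $\Gamma(u)(t)$ need not be continuous; the key algebraic fact is just $\supp(J \ast g) \subset \supp(g) + \supp(J)$, adapted to the distributional support.

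Next I would observe that the local term $\Gamma(u)(t)$ is supported in $\suppD(\Gamma(u)(t))$, which is trivially contained in $\suppD(\Gamma(u)(t)) + B_{R_J}$ because $0 \in B_{R_J}$ (the kernel is radial with $R_J > 0$). Taking the union of the two support sets and using that the distributional support of a sum is contained in the union of the distributional supports, I conclude $\suppD(u_t(t)) = \suppD(J \ast \Gamma(u)(t) - \Gamma(u)(t)) \subset \suppD(\Gamma(u)(t)) + B_{R_J}$, as desired.

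The main obstacle is not the geometry, which is elementary, but making the support manipulations rigorous at the level of distributions rather than continuous functions: the definition of $\suppD$ given in the excerpt is tailored to nonnegative functions, whereas $\Gamma(u)(t)$ and $u_t(t)$ change sign. The cleanest fix is to reduce to the nonnegative setting by testing against nonnegative $\varphi$ and splitting into positive and negative parts, or equivalently to work with $|u_t|$ and $|\Gamma(u)|$ and invoke monotonicity of supports under taking absolute values; I would also need to justify the Fubini interchange, which is legitimate because $\Gamma(u)(t) \in \L^1(\R^N)$ and $J \ast \varphi$ is bounded with compact support. Once these measure-theoretic points are dispatched, the inclusion follows directly.
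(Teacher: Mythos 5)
Your proposal is correct and follows essentially the same route as the paper's proof: test $u_t = J\ast\Gamma(u) - \Gamma(u)$ against a test function supported in the complement of $\suppD(\Gamma(u)(t)) + B_{R_J}$, use the convolution support inclusion $\supp(J\ast g)\subset \supp(g)+\supp(J)$ for the first term, and the trivial inclusion for the second. The extra care you take with Fubini and with sign-changing functions is sound but not a different argument.
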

  \begin{proof} Recall first that the equation holds down to $t = 0$ so
    that we may  consider here $t \ge 0$ (and not only $t >
    0$). Let $\varphi\in \C^{\infty}_c (A^c)$, where  $A =
    \suppD(\Gamma(u)(t)) + B_ {R_J} $. Notice that the support of $J
    \ast \Gamma(u)$ (which is a continuous function) lies inside $A$, so
    that \[ \displaystyle\displaystyle\int_{\mathbb{R}^N}(J \ast
      \Gamma(u))\varphi = 0.  \] Similarly, the supports of $\Gamma(u)$
      and $\varphi$ do not intersect, so that \[
	\displaystyle\displaystyle\int_{\mathbb{R}^N}u_t\varphi
	=\displaystyle\displaystyle\int_{\mathbb{R} ^N}(J \ast
	\Gamma(u))\varphi
	-\displaystyle\displaystyle\int_{\mathbb{R}^N}\Gamma(u)\varphi =
	0, \] which means that the support of $u_t$ is contained in $A$.
      \end{proof}

The following Theorem gives a control of the support 
of the solution $u(t)$ and the corresponding 
temperature $\Gamma(u)(t)$.

\begin{teo} \label{teo:compact}
	Let $f\in \L^1(\mathbb{R}^N)$ be compactly supported. Then,
  for any $t>0$, the solution $u(t)$ and the corresponding temperature
  $\Gamma(u)(t)$ are compactly supported.  
  \end{teo}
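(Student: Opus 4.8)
The plan is to show that the support of the solution grows at a controlled, finite speed, so that starting from compactly supported data it stays compact on every finite time interval.

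First I would combine Lemma~\ref{soporte_u_t} with the structure of the equation. Since $\Gamma(u)$ vanishes exactly where $|u|\le 1$, we always have $\suppD(\Gamma(u)(t))\subset \suppD(u(t))$, and therefore Lemma~\ref{soporte_u_t} gives the inclusion $\suppD(u_t(t))\subset \suppD(u(t))+B_{R_J}$. The key heuristic is that the support can only expand through the convolution term $J\ast\Gamma(u)$, whose support is contained in $\suppD(\Gamma(u)(t))+B_{R_J}$; hence in time $\Delta t$ the support of $u$ can grow by at most $R_J$ plus whatever mass $u_t$ deposits just outside the current support. This suggests a propagation estimate of the form: if $\suppD(u(t))\subset B_{\rho(t)}$ then $\suppD(u(s))\subset B_{\rho(t)+R_J}$ for all $s$ slightly larger than $t$, the expansion being tied to the jumps of $J$-width $R_J$.

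The cleanest way to make this rigorous is an iteration/Gr\"onwall-type argument on a growing family of balls. Assume $\supp(f)\subset B_{\rho_0}$. I would fix $T>0$ and show by the integral formulation \eqref{solution} that for $t\in[0,T]$ the support of $u(t)$ stays inside a ball $B_{\rho_0+R(t)}$ where $R(t)$ grows at a finite rate. Concretely, set $A(t)=\suppD(\Gamma(u)(t))+B_{R_J}$; outside $A(t)$ the right-hand side of the equation vanishes (as in the proof of Lemma~\ref{soporte_u_t}), so $u$ does not change there instantaneously. The support of $u$ can therefore only enter a point $x$ once $\Gamma(u)$ becomes nonzero within distance $R_J$ of $x$, which in turn requires $|u|>1$ there. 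Integrating \eqref{solution} and tracking how the nested supports $\suppD(u(t))\subset \suppD(u(0))+\text{(finite growth)}$ evolve, one obtains that for each $T$ there is a radius $\rho_T$ with $\suppD(u(t))\subset B_{\rho_T}$ for all $t\in[0,T]$; and since $\suppD(\Gamma(u)(t))\subset\suppD(u(t))$, the temperature is compactly supported as well.

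The main obstacle I expect is making the finite-speed-of-propagation claim precise despite the lack of spatial continuity of $u$: the support is only distributional, so I cannot argue pointwise. The careful point is to run the argument with the integral formulation against nonnegative test functions $\varphi\in\C^\infty_c$ supported far outside the current support, showing $\int u(t)\varphi$ stays zero until the convolution term can reach that region, and then to close an induction on time steps of a fixed small length during which the support expands by no more than $R_J$ per step. Because the existence time step in Theorem~\ref{existence_solution_fix_point} is uniform, finitely many such steps cover $[0,T]$, yielding a uniform radius $\rho_T$ and hence compactness of both $\supp u(t)$ and $\supp\Gamma(u)(t)$ on $[0,T]$.
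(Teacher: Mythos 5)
Your overall skeleton --- finite speed of propagation, testing the integral equation against nonnegative test functions supported outside the current support, and an induction over time steps during which the support of $u$ grows by at most $R_J$ --- is indeed the paper's strategy. But there is a genuine gap at the one place where the argument must be quantitative: you never establish a \emph{uniform positive length} for the time steps, and the source of uniformity you invoke (the existence time step from the fixed-point argument of Theorem~\ref{existence_solution_fix_point}, i.e.\ $t_0<1/2$) is the wrong one. That time step has nothing to do with propagation speed, and it is false in general that the support expands by at most $R_J$ in time $1/2$. The correct mechanism is the threshold in $\Gamma$: at a point outside $\suppD(f)$, $u$ starts at $0$, and $\Gamma(u)$ cannot become nonzero there until $|u|$ exceeds $1$; to convert this into a time bound you need a uniform-in-time bound on the forcing $J\ast\Gamma(u)$. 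The paper gets it from Lemma~\ref{gamma_subcaloric}: since $|\Gamma(u)|$ is subcaloric, $\|\Gamma(u(t))\|_{\L^1}\le\|\Gamma(f)\|_{\L^1}$ for all $t$, hence $\|J\ast\Gamma(u)(t)\|_{\infty}\le c_0:=\|J\|_{\infty}\|\Gamma(f)\|_{\L^1}$. Testing \eqref{solution} against $\varphi\ge0$ supported in $(\suppD f)^c$ then gives $\int|u(t)|\varphi\le c_0\,t\int\varphi$, so for $t\le t_0:=1/c_0$ one has $\suppD(\Gamma(u)(t))\subset\suppD(f)$ exactly --- the temperature's support does not move \emph{at all} on this interval --- and Lemma~\ref{soporte_u_t} then yields $\suppD(u(t))\subset\suppD(f)+B_{R_J}$ on $[0,t_0]$. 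Without this estimate your ``for $s$ slightly larger than $t$'' has no uniform lower bound and the induction does not close; recall that the linear nonlocal heat equation (no threshold) has infinite speed of propagation, so the quantitative delay is genuinely where the theorem lives.

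A secondary point: the iteration also requires the step length to be the \emph{same} at every stage. This again comes from the subcaloric property, since $\|\Gamma(u(t_0))\|_{\L^1}\le\|\Gamma(f)\|_{\L^1}$ gives the same $c_0$, hence the same $t_0=1/c_0$, when you restart from the data $u(t_0)$. As written, your proposal correctly identifies the qualitative picture (threshold, $R_J$ per step, distributional supports handled by testing) but omits the estimate that makes it true and replaces it with an irrelevant one.
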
 
  \begin{proof} 
  
\noindent{\sc Estimate of the support of $\Gamma(u)$.} Since
$|\Gamma(u)|$ is subcaloric, we have that $\|\Gamma(u)\|_{\L^1
(\Omega)}\le \|\Gamma(f)\|_{\L^1(\Omega)} $, then \[ (J \ast
  \Gamma(u))(x,t)\le
  \|J\|_{L^{\infty}(\mathbb{R}^N)}\|\Gamma(u)\|_{\L^1(\mathbb{R}^N)} \le
  \|J\|_{L^{\infty}(\mathbb{R}^N)} \|\Gamma(f)\|_{\L^1(\mathbb{R}^N)}.
  \] We denote $c_0=
  \|J\|_{L^{\infty}(\mathbb{R}^N)}\|\Gamma(f)\|_{\L^1(\mathbb{R}^N)}$.
  Multiplying (\ref{solution}) by a nonnegative test function $\varphi
  \in \C^{\infty}_c ((\suppD f)^c)$ and integrating in space and time we
  have \[ \displaystyle\displaystyle\int_{\mathbb{R}^N}|u(t)|\varphi \le
    \displaystyle\int^ t_0\displaystyle\int_{\mathbb{R}^N}(J \ast \Gamma
    (u))\varphi \le c_0 t\displaystyle\int_{\mathbb{R}^N}\varphi.  \]
    Taking $t_0 = 1/c_0$, we get
    $\displaystyle\int_{\mathbb{R}^N}(\left|u(t)\right| - 1)\varphi \le
    0$ for all $t \in [0, t_0]$. Using an approximation
    $\varphi\chi_n$ where $\chi_n \to \sign_+(|u|-1)$, we deduce that
    $\displaystyle\int_{\mathbb{R}^N}|\Gamma(u)| \varphi \le 0$, so that
    \begin{equation}\label{soporte_gamma_u} \suppD(\Gamma(u)(t)) \subset
      \suppD(f),\quad \mbox{for all } t \in [0, t_0].  \end{equation}

\noindent{\sc Estimate of the support of $u$.} Thanks to Lemma
\ref{soporte_u_t} we know that $\suppD(u_t(t)) \subset
\suppD(\Gamma(u)(t)) + B_{R_J}\subset \suppD(f) + B_{R_J} ,\quad
\mbox{for all } t \in [0, t_0]$.  This means that for any $\varphi\in
\C^{\infty}_c ((\suppD(f) + B_{R_J} )^c)$, we have, \[
  \displaystyle\displaystyle\int_{\mathbb{R}^N}u\varphi={}
  \displaystyle\int_0^t\displaystyle\int_{\mathbb{R}^N}u_t\varphi=0,\quad
  \mbox{for all } t \in [0, t_0] \] that is,
  \begin{equation}\label{soporte_u} \suppD(u(t)) \subset \suppD(f)+
    B_{R_J} ,\quad \mbox{for all } t \in [0, t_0].  \end{equation}
    \noindent {\sc Iteration.} Consider now the initial data
    $u_0=u(t_0)$, whose support satisfies that, \[ \suppD(u(t_0))
      \subset \suppD(f)+ B_{R_J}, \] then, thanks to
      \eqref{soporte_gamma_u} and \eqref{soporte_u}, \[ \suppD(u(t))
	\subset \suppD(f)+ 2\,B_{R_J},\quad\mbox{ for all } t \in [0,
	  2\,t_0].  \] Iterating this process we arrive to, \[
	    \suppD(\Gamma(u)(t)) \subset \suppD(f)+
	    nB_{R_J},\quad\mbox{with } n= \lfloor t/t_0\rfloor, \] and
	    \[ \suppD(u(t)) \subset \suppD(f)+ nB_{R_J},\quad\mbox{with
	    } n= \lfloor t/t_0\rfloor+1, \] where $\lfloor x\rfloor$ is
	    the integer part of $x$.  \end{proof}

The last results have counterparts for $\mathrm{BC}$-solutions:

\begin{teo}\label{th:fsp.bc}
Let $f\in\mathrm{BC}(\R^N)$, and let $u$ be the corresponding
$\mathrm{BC}$-solution. Then, noting $v=\Gamma(u)$ we have:
\begin{itemize}
\item[\rm(i)]
    $u_t(x,t)=0$ for any $x\notin (\supp(v(\cdot,t))+B_{R_J})$, $t\ge0$.

\item[\rm(ii)] If $\sup_{|x|\ge R}|f(x)|<1$ for some $R>0$,   then 
$v(\cdot,t)$ is compactly supported for all $t>0$. If moreover 
$f\in\C_{\rm c}(\R^N)$, then $u(\cdot,t)$ is also compactly supported 
for all $t>0$. 
    \end{itemize}
\end{teo}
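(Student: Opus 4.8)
The plan is to handle the two items separately: (i) is an immediate pointwise version of Lemma~\ref{soporte_u_t}, while (ii) is a finite–speed–of–propagation estimate in the spirit of Theorem~\ref{teo:compact}, but carried out with genuine (continuous) supports and with the $\L^1$ bounds there replaced by $\L^\infty$ bounds. For (i), I would use that a $\mathrm{BC}$-solution is classical, so $u_t(x,t)=(J\ast v)(x,t)-v(x,t)$ holds pointwise. If $x\notin\supp(v(\cdot,t))+B_{R_J}$ then, since $0\in B_{R_J}$, we have $v(x,t)=0$; moreover $(J\ast v)(x,t)=\int_{\R^N}J(x-y)v(y,t)\,\mathrm{d}y$ vanishes, because $J(x-y)\neq0$ forces $|x-y|\le R_J$ and $v(y,t)\neq0$ forces $y\in\supp(v(\cdot,t))$, which would put $x$ in $\supp(v(\cdot,t))+B_{R_J}$. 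Hence $u_t(x,t)=0$.

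For the first assertion of (ii) the real work is to bound $\supp(v(\cdot,t))$. First I would record a uniform $\L^\infty$ bound on $v$: the pointwise computation of Lemma~\ref{gamma_subcaloric} applies verbatim to the classical solution, so $|v|$ is subcaloric, and comparison with the linear nonlocal heat problem as in Lemma~\ref{lem.comp.linfty} (using the comparison principle of \cite{BrandleChasseigneFerreira2011}) yields $\|v(t)\|_{\L^\infty(\R^N)}\le\|\Gamma(f)\|_{\L^\infty(\R^N)}=:K$ for all $t\ge0$. Setting $M_0:=\sup_{|x|\ge R}|f(x)|<1$ and $\delta_0:=1-M_0>0$, and assuming $K>0$ (otherwise $v\equiv0$), I would choose the speed $\lambda:=2KR_J/\delta_0$ and claim that $v(x,t)=0$ whenever $|x|\ge R+\lambda t$, which gives $\supp(v(\cdot,t))\subset\overline{B_{R+\lambda t}}$.

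The engine is a self-improving estimate. Assume $v(x,s)=0$ for all $s\le T$ and $|x|\ge R+\lambda s$. Then for such $(x,s)$ the integral equation reduces to $u(x,s)=f(x)+\int_0^s(J\ast v)(x,\sigma)\,\mathrm{d}\sigma$, and $(J\ast v)(x,\sigma)$ is nonzero only if some $y\in\supp(v(\cdot,\sigma))\subset\overline{B_{R+\lambda\sigma}}$ lies within $R_J$ of $x$; combined with $|x|\ge R+\lambda s$ this forces $\sigma>s-R_J/\lambda$. Since $|(J\ast v)(x,\sigma)|\le K$, the integral is at most $KR_J/\lambda=\delta_0/2$, so $|u(x,s)|\le M_0+\delta_0/2<1$ and indeed $v(x,s)=0$. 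To turn this consistency into the full claim I would run a continuity (bootstrap) argument: let $t^\ast$ be the supremum of times $T$ for which the hypothesis holds; this set is closed since $v$ is continuous, the estimate just proved gives $|u|\le 1-\delta_0/2$ uniformly on $\{\,|x|\ge R+\lambda s,\ s\le t^\ast\,\}$, and the bound $|u_t|\le 2K$ lets one extend in time, $|u(x,s)|\le(1-\delta_0/2)+2K(s-t^\ast)<1$ for $s-t^\ast$ small and $|x|\ge R+\lambda s$, so the hypothesis persists beyond $t^\ast$ unless $t^\ast=\infty$. This bootstrap is the step I expect to demand the most care, precisely because the exterior region is unbounded (one must keep all estimates uniform in $x$) and because the threshold $|u|=1$ is reached only non-strictly at each time slice.

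Finally, for the second assertion of (ii), suppose $f\in\C_c(\R^N)$ with $\supp(f)\subset\overline{B_{R_f}}$. I would feed the bound $\supp(v(\cdot,s))\subset\overline{B_{R+\lambda s}}$ into item (i): for $|x|>R+\lambda s+R_J$ we have $x\notin\supp(v(\cdot,s))+B_{R_J}$, hence $u_t(x,s)=0$. Therefore, for $|x|>\max\{R_f,\ R+\lambda t+R_J\}$ the integral equation gives $u(x,t)=f(x)+\int_0^t u_t(x,s)\ds=f(x)=0$, so $\supp(u(\cdot,t))\subset\overline{B_{\max\{R_f,\,R+\lambda t+R_J\}}}$ is compact, as claimed.
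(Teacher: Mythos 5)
Your proposal is correct and follows essentially the same route as the paper: item (i) is the same pointwise support computation, and item (ii) rests on the same two ingredients, namely the uniform bound $\|v(t)\|_{\L^\infty(\R^N)}\le\|\Gamma(f)\|_{\L^\infty(\R^N)}$ coming from the subcaloricity of $|\Gamma(u)|$ and the integral equation in the exterior region where $|f|<1$. The only difference is presentational: you run a continuous-in-time bootstrap with an explicit propagation speed $\lambda=2KR_J/\delta_0$, whereas the paper advances the support by $R_J$ over discrete time steps of length $\delta_0/(2K)$ and iterates, which yields the identical linear growth of the support.
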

\begin{proof}
(i) The proof is  similar (though even easier, since the supports
are understood in the classical sense) to the one for
$\L^1$-solutions.

\noindent (ii) Since $\chi=|\Gamma(u)|$ is subcaloric,  we get
    $$
    \Big|\big(J*\Gamma(u)\big)(x,t)\Big|\leq \|J\|_{\L^1(\R^N)}\|\Gamma(u)(t)\|_{\L^\infty(\R^N)}\leq
    \|\Gamma(f)\|_{\L^\infty(\R^N)}.
    $$
   This estimate comes from comparison in $\L^{\infty}$ with 
   constants, exactly as in Lemma~\ref{lem.comp.linfty}
Therefore, from the integral equation, \eqref{solution} for $|x|\ge 
R$ we have 
\begin{equation}
  \label{eq:bound.for.u}
  \left\{
  \begin{aligned}
u(x,t) &\le f(x)+t\|\Gamma(f)\|_{\L^\infty(\R^N)}\le\sup_{|x|\ge R} 
|f(x)| +t\|\Gamma(f)\|_{\L^\infty(\R^N)}\,, \\
u(x,t) &\ge f(x)- t\|\Gamma(f)\|_{\L^\infty(\R^N)}\ge-\sup_{|x|\ge R} 
|f(x)| -t\|\Gamma(f)\|_{\L^\infty(\R^N)}\,.
\end{aligned}\right.
\end{equation}
Thus, for all $|x|\ge R$ and $t\le(1-\sup_{|x|\ge
R}|f(x)|)/(2\|\Gamma(f)\|_{\L^\infty(\R^N)})$ we have $-1<u(x,t)<1$. 
Hence, for such $x,t$, we have $v(x,t)=0$.  
Then, by (i), $u(x,t)=f(x)$ for all $|x|\ge R+R_J$
and $t=(1-\sup_{|x|\ge R}|f(x)|)/(2\|\Gamma(f)\|_{\L^\infty(\R^N)})$. 
We finally proceed by iteration to get the result for all times.
\end{proof}


\subsection {$\L^1$-solutions that are continuous.}

As a corollary of the control of the supports, we will prove that if the
initial data is in $\L^1\left(\mathbb{R}^N\right) \cap
\C_0(\mathbb{R}^N)$, with $\C_0(\mathbb{R}^N)=\{\varphi\in
  \C(\mathbb{R}^N):\; \varphi\to 0\;\mbox{as } |x|\to\infty\}$, then the
  $\L^1$-solution is in fact continuous. We start by considering the
  case where $f$ is continuous and compactly supported, i.e. in $ \C_c(\mathbb{R}^N)$.

\begin{lema} \label{continuous_sol} Let $f \in
  \L^1\left(\mathbb{R}^N\right) \cap \C_c(\mathbb{R}^N)$. 
  Then the corresponding $\L^1$-solution is
  continuous in $[0,\infty)\times\R^{N}$.  \end{lema}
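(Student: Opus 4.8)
The plan is to identify the $\L^1$-solution with the $\mathrm{BC}$-solution and thereby transfer the continuity of the latter. Since $f\in\C_c(\R^N)$ is in particular bounded and continuous, we have $f\in\mathrm{BC}(\R^N)$, so Theorem~\ref{fix_point_bc} provides a unique $\mathrm{BC}$-solution $u$, which by the remark following the definition of $\mathrm{BC}$-solutions is continuous on $[0,\infty)\times\R^N$. If I can show that this $u$ is also an $\L^1$-solution, then uniqueness of $\L^1$-solutions (Theorem~\ref{existence_solution_fix_point}) forces it to coincide with the $\L^1$-solution of the statement, which is then continuous. The work is thus entirely in checking that the $\mathrm{BC}$-solution belongs to $\C([0,\infty);\L^1(\R^N))$ and satisfies the defining integral identity a.e.

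To prove that the $\mathrm{BC}$-solution $u$ is an $\L^1$-solution, I first need $u(t)\in\L^1(\R^N)$ for every $t$, with supports controlled on finite time intervals. This is exactly the content of Theorem~\ref{th:fsp.bc}(ii): taking $R$ larger than the radius of $\supp(f)$ gives $\sup_{|x|\ge R}|f(x)|=0<1$, and since $f\in\C_c(\R^N)$ the ``moreover'' part applies, so $u(\cdot,t)$ is compactly supported for every $t>0$. The iteration in that proof, mirroring Theorem~\ref{teo:compact}, shows more precisely that $\supp(u(\cdot,t))\subset\supp(f)+nB_{R_J}$ with $n$ growing linearly in $t$; hence for fixed $T>0$ all the sets $\supp(u(\cdot,t))$ with $t\in[0,T]$ lie inside a single compact set $K_T$. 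Being bounded (as a $\mathrm{BC}$-solution) and supported in $K_T$, each $u(\cdot,t)$ belongs to $\L^1(\R^N)$.

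It remains to upgrade the pointwise continuity of $u$ to continuity as a curve in $\L^1$. Since $u$ is continuous on $[0,\infty)\times\R^N$ and supported, on $[0,T]$, in the fixed compact $K_T$, it is uniformly continuous on $[0,T]\times K_T$; therefore for $t_1,t_2\in[0,T]$ one has $\|u(t_1)-u(t_2)\|_{\L^1(\R^N)}\le |K_T|\,\sup_{x}|u(x,t_1)-u(x,t_2)|\to 0$ as $t_1\to t_2$, where $|K_T|$ denotes the Lebesgue measure of $K_T$. Thus $u\in\C([0,\infty);\L^1(\R^N))$, and since the integral equation holds for every $x$ (hence a.e.), $u$ satisfies the defining identity of an $\L^1$-solution; the identification with the $\L^1$-solution then follows from uniqueness. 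The only genuinely delicate point is this last passage from pointwise to $\L^1$-continuity, which is why the uniform compactness of the supports on finite intervals---furnished by the free-boundary control of Theorem~\ref{th:fsp.bc}---is essential; everything else is bookkeeping on top of the already established theory.
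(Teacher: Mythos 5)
Your proposal is correct and follows essentially the same route as the paper: identify the $\mathrm{BC}$-solution with the $\L^1$-solution via Theorem~\ref{th:fsp.bc}(ii) and uniqueness. You merely fill in a detail the paper leaves implicit, namely why $u\in\C([0,T];\L^1(\R^N))$, which you justify correctly by the uniform containment of the supports in a fixed compact set on $[0,T]$.
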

 \begin{proof}
Since a BC-solution with a continuous and compactly supported initial 
data remains compactly supported in space for all times (see 
Theorem~\ref{th:fsp.bc}), it is also 
integrable in space for all times. Moreover, $u\in 
\C([0,T];L^1(\mathbb{R}^N))$. Hence, by uniqueness it coincides with the 
$\L^1$-solution with the same initial data. In other terms, the 
$\L^{1}$-solution is continuous.
\end{proof}

We now turn to the general case.
 
 \begin{prop} Let $f \in \L^1\left(\mathbb{R}^N\right) \cap
   \C_0(\mathbb{R}^N)$.  Then the corresponding $\L^1$-solution is
   continuous in $[0,\infty)\times\R^{N}$.  \end{prop}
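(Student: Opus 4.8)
The plan is to approximate the general initial datum $f\in\L^1(\R^N)\cap\C_0(\R^N)$ by compactly supported continuous functions, for which continuity of the solution is already known (Lemma~\ref{continuous_sol}), and then to transfer continuity to the limit using the contraction estimates. First I would fix a sequence $f_k\in\L^1(\R^N)\cap\C_c(\R^N)$ with $f_k\to f$ both in $\L^1(\R^N)$ and uniformly on $\R^N$; such a sequence exists because $f\in\C_0(\R^N)$ means $f$ decays at infinity, so a smooth cutoff $f_k=f\cdot\eta_k$ (with $\eta_k$ a standard cutoff equal to $1$ on $B_k$) converges to $f$ in both norms. Each $f_k$ lies in $\C_c(\R^N)$, so by Lemma~\ref{continuous_sol} the corresponding $\L^1$-solution $u_k$ is continuous on $[0,\infty)\times\R^N$.

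Next I would pass to the limit and identify the limit as the $\L^1$-solution with datum $f$. Since $f_k\in\L^1(\R^N)$, each $u_k$ is the $\L^1$-solution, and the $\L^1$-contraction property (Corollary~\ref{contract_prop_L_1}) gives $\|u_k(t)-u_j(t)\|_{\L^1(\R^N)}\le\|f_k-f_j\|_{\L^1(\R^N)}$ for all $t\ge0$. Hence $(u_k)$ is Cauchy in $\C([0,T];\L^1(\R^N))$ for every $T$, and its limit is precisely the $\L^1$-solution $u$ associated to $f$. The key remaining point is to upgrade this $\L^1$-convergence to \emph{uniform} convergence on compact time intervals, because uniform convergence of continuous functions yields a continuous limit.

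The main obstacle is exactly this uniform control: the $\L^1$-contraction alone does not bound the sup-norm difference. The natural remedy is to view the $u_k$ and $f_k$ as $\mathrm{BC}$-data and invoke Lemma~\ref{cota_inf_BC}, which for $f_k,f_j\in\mathrm{BC}(\R^N)$ gives $\max_{x}|u_k-u_j|(x,t)\le C(T)\max_{x}|f_k-f_j|(x)$ on $[0,T]$. Since $f_k\to f$ uniformly, the right-hand side tends to $0$, so $(u_k)$ is uniformly Cauchy on $\R^N\times[0,T]$; let $\tilde u$ denote its uniform limit, which is continuous on $[0,\infty)\times\R^N$. To finish, I would reconcile the two limits: uniform convergence on $\R^N\times[0,T]$ implies, together with a uniform-in-$k$ tail estimate on the supports coming from Theorem~\ref{th:fsp.bc} applied to the $\mathrm{BC}$-solutions $u_k$, that $u_k(t)\to\tilde u(t)$ also in $\L^1(\R^N)$ for each $t$. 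Comparing with the $\L^1$-limit already obtained forces $\tilde u=u$ a.e., so the $\L^1$-solution $u$ admits the continuous representative $\tilde u$, proving that it is continuous on $[0,\infty)\times\R^N$. The delicate step is justifying the $\L^1$-convergence of the $\mathrm{BC}$-limit, i.e.\ controlling mass escaping to infinity uniformly in $k$; here the decay $f\in\C_0$ together with the uniform support/decay estimates for $\mathrm{BC}$-solutions is what makes the two limits coincide.
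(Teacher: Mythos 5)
Your proposal follows essentially the same route as the paper: approximate $f$ by $f_k\in\C_c(\R^N)$ in both the $\L^1$ and the sup norm, use Lemma~\ref{continuous_sol} to know that each $u_k$ is continuous, the $\L^1$-contraction (Corollary~\ref{contract_prop_L_1}) to get convergence to the $\L^1$-solution $u$, and Lemma~\ref{cota_inf_BC} to get uniform convergence on $\R^N\times[0,T]$. The core of the argument is therefore sound.

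The one place where you diverge, and where you make the proof needlessly fragile, is the final reconciliation of the two limits. The paper identifies the uniform limit directly: since $f\in\C_0(\R^N)\subset\mathrm{BC}(\R^N)$, the BC-solution $u^c$ with datum $f$ already exists and is continuous (Theorem~\ref{fix_point_bc}), and Lemma~\ref{cota_inf_BC} says precisely that $u_k\to u^c$ uniformly on $\R^N\times[0,T]$; combined with $u_k\to u$ in $\L^1$, this gives $u=u^c$ at once. You instead introduce an abstract uniform limit $\tilde u$ and then try to prove $u_k(t)\to\tilde u(t)$ in $\L^1$ via ``a uniform-in-$k$ tail estimate on the supports coming from Theorem~\ref{th:fsp.bc}''. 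That estimate is not actually available as stated: by Theorem~\ref{teo:compact} the support of $u_k(t)$ is controlled by $\supp(f_k)+nB_{R_J}$, and $\supp(f_k)$ grows with $k$, so there is no uniform-in-$k$ confinement of mass. Fortunately this whole step is unnecessary: $\L^1$-convergence $u_k(t)\to u(t)$ gives a.e.\ convergence along a subsequence, uniform convergence gives pointwise convergence to $\tilde u(t)$ everywhere, and hence $u(t)=\tilde u(t)$ a.e.\ with no tail control whatsoever. Either replace your last step by this two-line observation, or, better, recognize $\tilde u$ as the BC-solution with datum $f$ as the paper does.
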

\begin{proof}
  Let $f_n$ be a sequence of continuous and compactly 
  supported functions such that
\[ \|f_n - f\|_{L^{\infty}(\mathbb{R}^N)} <\frac{1}{n},\qquad \|f_n - f\|_{L^1
(\mathbb{R}^N)} <\frac{1}{n}.\]
Let $u^1_n,\, u^1$ be the $\L^1$-solutions with initial data respectively $f_n$ 
and $f$, and $u^c_n,\; u^c$ the corresponding BC-solutions. We know by 
Lemma \ref{continuous_sol} that  $u^1_n = u^c_n$. Then, using the
$\L^1$-contraction property for $\L^1$-solutions, we have that 
$$\|u^1_n - u^1\|_{L^1(\mathbb{R}^N\times [0,T])}\to 0$$ 
for any $T \in [0,\infty)$. Moreover, by 
Lemma \ref{cota_inf_BC}, $\|u^1_n - u^c\|_{L^{\infty}([0,T], 
L^{\infty}(\mathbb{R}^N))} \to 0$. Hence we have in the limit 
$u^{1}=u^{c}$ which proves the result.
\end{proof}


\section{First results concerning the asymptotic behaviour}
\label{sect:not.interact}

In the following three sections we study the asymptotic behaviour of the
solutions of the two-phase Stefan problem, with different sign-changing
initial data chosen in such a way that the solutions, 
$u(t)$, satisfy either: 
\begin{enumerate} \renewcommand{\labelenumi}{$(\roman{enumi})$} 
  \item the positive and negative part not interact, in any time $t>0$; 
  \item the positive and negative temperature $v=\Gamma(u)$ do not
  interact, in any time $t>0$; 
  \item the positive and negative part of $u$ interact but the solution
    is driven by the one-phase Stefan regime after some time. 
\end{enumerate}

In order to describe the asymptotic behaviour, we write the initial 
data as $$f=f_+-f_-,$$ separating the positive and negative parts 
where we recall the notations $f_{+}=\max(f,0)$ and $f_{-}=\max(-f,0)$.{}

Let us first introduce
the following solutions: the solution $\mathbb{U}^+$, corresponding to
the initial data $\mathbb{U}^+(0)=f_+$ and the solution $\mathbb{U}^-$,
corresponding to the initial data $\mathbb{U}^-(0)=f_-$\,.

\begin{lema} The functions $\mathbb{U}^+$ and $\mathbb{U}^-$ are
  solutions of the one-phase Stefan problem: $$\partial_t u=J\ast(u-1)_+
  - (u-1)_+\,.$$ \end{lema}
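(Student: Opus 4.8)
The plan is to reduce the two-phase equation to the one-phase equation by showing that a solution issued from nonnegative data stays nonnegative for all times. The crucial elementary observation is that the graph $\Gamma(s)=\sign(s)(|s|-1)_+$ coincides with the one-phase graph $(s-1)_+$ on the half-line $\{s\ge0\}$: for $s>0$ we have $\sign(s)=1$ and $|s|=s$, so $\Gamma(s)=(s-1)_+$, and at $s=0$ both expressions vanish. Consequently, if I can prove that $\mathbb{U}^+(t)\ge0$ a.e. for every $t>0$, then $\Gamma(\mathbb{U}^+)=(\mathbb{U}^+-1)_+$ and the defining equation $\partial_t\mathbb{U}^+=J\ast\Gamma(\mathbb{U}^+)-\Gamma(\mathbb{U}^+)$ becomes precisely $\partial_t\mathbb{U}^+=J\ast(\mathbb{U}^+-1)_+-(\mathbb{U}^+-1)_+$. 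The same reasoning applies verbatim to $\mathbb{U}^-$, whose initial datum $f_-$ is likewise nonnegative.

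The nonnegativity is obtained directly from the contraction of the positive/negative parts granted by Corollary~\ref{contract_prop_L_1}. First note that $u\equiv0$ is the unique $\L^1$-solution with zero initial data, since $\Gamma(0)=0$ (uniqueness being Theorem~\ref{existence_solution_fix_point}). Applying the positive-part contraction to the pair $(0,\mathbb{U}^+)$, whose data are $0$ and $f_+$, gives
\[
\|(0-\mathbb{U}^+)_+(t)\|_{\L^1(\R^N)}\le\|(0-f_+)_+\|_{\L^1(\R^N)}=\|(-f_+)_+\|_{\L^1(\R^N)}=0,
\]
because $f_+\ge0$ forces $(-f_+)_+=0$. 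Since $(0-\mathbb{U}^+)_+=(\mathbb{U}^+)_-$, this yields $(\mathbb{U}^+)_-(t)=0$ a.e., that is $\mathbb{U}^+(t)\ge0$ for all $t>0$. Replacing $f_+$ by $f_-$ gives $\mathbb{U}^-(t)\ge0$ in the same way.

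With nonnegativity established, the conclusion is immediate from the identity $\Gamma(s)=(s-1)_+$ on $\{s\ge0\}$ recalled above. I do not expect a genuine obstacle in this argument: it is essentially a sign-preservation statement followed by a one-line identification of the graphs. The only points that deserve a little care are to invoke the comparison in the exact form already provided by Corollary~\ref{contract_prop_L_1}, rather than re-deriving it, and to verify the boundary case $s=0$ so that the two evolution equations coincide almost everywhere and not merely on $\{\mathbb{U}^+>0\}$.
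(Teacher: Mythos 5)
Your argument is correct and follows essentially the same route as the paper: nonnegativity of $\mathbb{U}^\pm$ by $\L^1$-comparison with the zero solution, then the observation that $\Gamma$ coincides with the one-phase graph $(s-1)_+$ on $\{s\ge0\}$. The paper simply invokes ``comparison in $\L^1$'' without detail, whereas you make it explicit via the positive-part contraction of Corollary~\ref{contract_prop_L_1}; this is a faithful unpacking of the same idea, not a different proof.
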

  
 \begin{proof} By comparison in $\L^1$ for
    the two-phase Stefan problem, we know that $\mathbb{U}^+$ and
    $\mathbb{U}^-$ are nonnegative because their respective initial data
    are nonnegative.  Hence, for any $(x,t)$ we have in fact
    $\Gamma(\mathbb{U}^+(x,t))=(\mathbb{U}(x,t)-1)_+$.  Thus, the
    equation for $\mathbb{U}^+$ reduces to the one-phase Stefan problem.
    The same happens for $\mathbb{U}^-$.  \end{proof}

\begin{remark}
  Since $\mathbb{U}^+$
  is a solution of the one-phase Stefan problem, the supports of
  $\;\mathbb{U}^+$ and $\Gamma\left(\mathbb{U}^+\right)$ are
  nondecreasing 
  \begin{equation}\label{retention_property_one_phase}
  \begin{array}{ll}
    \supp_{\mathcal{D}'}(\mathbb{U}^+(s))\subset
    \supp_{\mathcal{D}'}(\mathbb{U}^+(t)), & 0\le
    s\le t\\
    \supp_{\mathcal{D}'}(\Gamma\left(\mathbb{U}^+\right)(s))\subset
    \supp_{\mathcal{D}'}(\Gamma\left(\mathbb{U}^+\right)(t)), & 0\le
    s\le t.  
    \end{array}
    \end{equation} We denote this property as {\bf retention}.
    It is satisfied also for $\mathbb{U}^-$ and
    $\Gamma\left(\mathbb{U}^-\right)$.  \end{remark}

Using the results concerning the asymptotic behaviour studied in
\cite{BCQ2012}, we know that in particular if $f$ satisfies the
hypothesis of \cite[Lem. 3.9.]{BCQ2012}, $\mathbb{U}^+$ and
$\mathbb{U}^-$ have limits as $t\to\infty$ which are obtained by means
of the projection operator $\proj{}$. 
We recall that this operator maps any nonnegative
initial data $f$ to $\proj{f}$, which is the unique solution to a
non-local obstacle problem at level one (see \cite[p. 23]{BCQ2012}). For
$\mathbb{U}^+$, the limit is $\proj{f_+}$ and for $\mathbb{U}^-$, the
limit is $\proj{f_-}$.  Now the link with our problem is the following:

\begin{lema}\label{lem:comp.upm} For any $t>0$,
  $-\mathbb{U}^-(t)\leq-u_-(t)\leq u(t) \leq u_+(t)\leq
  \mathbb{U}^+(t)\,.$ \end{lema}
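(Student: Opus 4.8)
The plan is to reduce everything to a comparison principle together with the sign properties of $\mathbb{U}^\pm$. The two middle inequalities are immediate from the decomposition $u(t)=u_+(t)-u_-(t)$ with $u_\pm(t)\ge0$, since then $u=u_+-u_-\le u_+$ and $u=u_+-u_-\ge-u_-$. So the real content lies in the two outer bounds $u_+(t)\le\mathbb{U}^+(t)$ and $u_-(t)\le\mathbb{U}^-(t)$.

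First I would extract a comparison principle from Corollary~\ref{contract_prop_L_1}. That corollary records contraction for the positive part of the difference of two $\L^1$-solutions, so if $f_1\le f_2$ a.e.\ then $(f_1-f_2)_+=0$, whence $\int_{\R^N}(u_1-u_2)_+(t)=0$ and therefore $u_1(t)\le u_2(t)$ a.e.\ for every $t\ge0$.

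Next I would exploit the oddness of the equation. Since $\Gamma(-s)=-\Gamma(s)$, if $w$ solves \eqref{eq1} with data $g$ then $-w$ solves \eqref{eq1} with data $-g$; by the uniqueness in Theorem~\ref{existence_solution_fix_point}, $-\mathbb{U}^-$ is precisely the solution with initial data $-f_-$. As $f=f_+-f_-$ obeys $-f_-\le f\le f_+$ pointwise, comparison against $\mathbb{U}^+$ (data $f_+$) and against $-\mathbb{U}^-$ (data $-f_-$) gives
\[
-\mathbb{U}^-(t)\le u(t)\le\mathbb{U}^+(t),\qquad t\ge0.
\]

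Finally I would pass from these bounds on $u$ to the claimed bounds on its parts, which is the only place the nonnegativity of $\mathbb{U}^\pm$ enters. Because $\mathbb{U}^+\ge0$ and $s\mapsto\max(s,0)$ is nondecreasing, $u(t)\le\mathbb{U}^+(t)$ yields $u_+(t)=\max(u(t),0)\le\max(\mathbb{U}^+(t),0)=\mathbb{U}^+(t)$; symmetrically, applying the nondecreasing map $s\mapsto\min(s,0)$ to $u(t)\ge-\mathbb{U}^-(t)$ and using $-\mathbb{U}^-(t)\le0$ gives $-u_-(t)=\min(u(t),0)\ge-\mathbb{U}^-(t)$. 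I expect no serious obstacle here: the only mild subtlety is this last passage, and it works precisely because $\mathbb{U}^+$ and $-\mathbb{U}^-$ carry the correct sign, so that controlling $u$ already controls its positive and negative parts.
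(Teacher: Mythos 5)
Your proof is correct and follows essentially the same route as the paper's: a comparison principle in $\L^1$ (which the paper invokes directly and you correctly extract from the positive-part contraction in Corollary~\ref{contract_prop_L_1}) combined with the sign of $\mathbb{U}^\pm$ to upgrade the bounds on $u$ to bounds on $u_\pm$. Your explicit appeal to the oddness of $\Gamma$, identifying $-\mathbb{U}^-$ as the unique solution with data $-f_-$, is precisely the detail the paper leaves implicit in ``the other inequalities are obtained the same way.''
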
 \begin{proof} This result follows from
    a simple comparison result in $\L^1$: since initially we have
    $\mathbb{U}^+(0)=f_+\geq u(0)$, it is clear that for any $t>0$,
    $\mathbb{U}^+(t)\geq u(t)$.  On the other hand, since
    $\mathbb{U}^+(0)=f_+\geq0$, we have also for any $t>0$,
    $\mathbb{U}^+(t)\geq 0$. Hence for any $t>0$, $\mathbb{U}^+(t)\geq
    u_+(t)$.
	
	    The other inequalities are obtained the same way.
	  \end{proof}

	This comparison allows us to prove that the asymptotic limit is
	well-defined: \begin{prop} \label{prop_hip_3_9} Let us assume
	  that $f\in\L^1(\mathbb{R^N})$ if $N\ge 3$, for low dimensions,
	  if $N=1$ or $N=2$, $J$ is non increasing in the radial
	  variable, and $f_+\le g_1$, $f_-\le g_2$ for some $g_1,\,
	  g_2\in\L^1\left(\mathbb{R}^N\right)\cap \C_0(\mathbb{R}^N)$,
	  radial and strictly decreasing in the radial variable.  Then
	  the following limit is defined in $\L^1(\R^N)$:
	  $$u_\infty(x):=\lim_{t\to\infty}u(x,t)\,.$$ \end{prop}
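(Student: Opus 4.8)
The plan is to promote the two-sided comparison of Lemma~\ref{lem:comp.upm} into a statement about the whole trajectory $\{u(t)\}_{t>0}$ and then pass to the limit. First I would observe that the hypotheses made here are exactly those of \cite[Lem.~3.9]{BCQ2012} applied separately to the nonnegative data $f_+\le g_1$ and $f_-\le g_2$; consequently the one-phase solutions converge, $\mathbb{U}^+(t)\to\proj{f_+}$ and $\mathbb{U}^-(t)\to\proj{f_-}$ in $\L^1(\R^N)$ as $t\to\infty$. Adding the two bounds of Lemma~\ref{lem:comp.upm} yields the pointwise domination
\[
|u(x,t)|\le W(x,t):=\mathbb{U}^+(x,t)+\mathbb{U}^-(x,t),
\]
where $W(t)\to\proj{f_+}+\proj{f_-}$ in $\L^1(\R^N)$. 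In particular $\{u(t)\}_{t>0}$ is bounded in $\L^1$ (conservation of energy already gives $\|u(t)\|_{\L^1(\R^N)}\le\|f\|_{\L^1(\R^N)}$), and since $\{W(t)\}$ is $\L^1$-convergent it is uniformly integrable and tight; by domination the same holds for $\{u(t)\}$.

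Next I would extract a limit through relative compactness of $\{u(t)\}$ in $\L^1(\R^N)$, using the Fr\'echet--Kolmogorov criterion. Uniform integrability and tightness are already in hand, so the missing ingredient is the equicontinuity of space translations, namely $\sup_{t}\|u(\cdot+h,t)-u(\cdot,t)\|_{\L^1(\R^N)}\to0$ as $h\to0$. This I would obtain from the integral formulation~\eqref{solution}, expressing $u(\cdot+h,t)-u(\cdot,t)$ in terms of the translates of $f$ and of $\Gamma(u)$ and invoking the $\L^1$-contraction (Corollary~\ref{contract_prop_L_1}) to bound the resulting time integral uniformly in $t$. With compactness secured, the $\omega$-limit set of the trajectory is nonempty.

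Finally I would identify the limit and upgrade subsequential convergence to a genuine limit. Any $\omega$-limit point $\ell$ is a stationary state, $J\ast\Gamma(\ell)=\Gamma(\ell)$; since $\Gamma(u(t))\to0$ (this follows as in the proof of Lemma~\ref{lem.comp.linfty}, from the subcaloricity of $|\Gamma(u)|$ proved in Lemma~\ref{gamma_subcaloric} together with the decay estimates of \cite{IgnatRossi}) and $\Gamma(\ell)$ vanishes at infinity, the nonlocal maximum principle forces $\Gamma(\ell)\equiv0$, so that $|\ell|\le1$ a.e.; passing to the limit in Lemma~\ref{lem:comp.upm} also gives $-\proj{f_-}\le\ell\le\proj{f_+}$.

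The delicate point, and what I expect to be the main obstacle, is to show that this limit point is \emph{unique}, so that the entire trajectory converges and $u_\infty:=\lim_{t\to\infty}u(t)$ is genuinely well defined. Because the equation is not monotone within each phase, there is no retention property to fall back on as in the one-phase case, and a priori the $\omega$-limit set could be a continuum of admissible stationary states. I would close this using the dissipative structure of the flow: the quantity $t\mapsto\int_{\R^N}|\Gamma(u)(t)|$ is nonincreasing (again by Lemma~\ref{gamma_subcaloric}), which together with the $\L^1$-contraction and the invariance of the $\omega$-limit set under the (nonexpansive) solution semigroup should rule out two distinct constrained limits along the same orbit, forcing the $\omega$-limit set to reduce to a single point.
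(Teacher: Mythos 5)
Your route is workable but genuinely different from, and much heavier than, the paper's. The paper never compactifies the trajectory: it writes $u(t)=f+\int_0^t J\ast\Gamma(u)(s)\ds-\int_0^t\Gamma(u)(s)\ds$ and observes that Lemma~\ref{lem:comp.upm} yields the pointwise bound $|\Gamma(u)|\le\max\big((\mathbb{U}^+-1)_+,(\mathbb{U}^--1)_+\big)$; by \cite[Cor.~3.10, 3.11]{BCQ2012} the time integrals of $(\mathbb{U}^\pm-1)_+$ converge in $\L^1$ (this is exactly what the radial, decreasing majorants $g_1,g_2$ are for in low dimensions), so the integral $\int_0^\infty\Gamma(u)(s)\ds$ converges absolutely in $\L^1$ and the limit of $u(t)$ exists outright --- no Fr\'echet--Kolmogorov, no $\omega$-limit set, no separate uniqueness discussion. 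What your approach buys is robustness (it would survive without the quantitative time-integrability of the one-phase temperature), but at the price of re-proving convergence of the whole trajectory from subsequential convergence, which is precisely the step you leave unfinished. Note also that you use the hypotheses only through \cite[Lem.~3.9]{BCQ2012} (convergence of $\mathbb{U}^\pm$), whereas their real role in this proposition is the integrability in time just mentioned.

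Two points need tightening. First, the final paragraph is not yet a proof: ``the dissipative structure \dots should rule out two distinct limits'' is a hope, not an argument. It can, however, be closed cleanly: any $\omega$-limit point $\ell$ satisfies $|\ell|\le1$ a.e., hence $\Gamma(\ell)\equiv0$ and $\ell$ is a \emph{fixed point} of the semiflow; by the $\L^1$-contraction of Corollary~\ref{contract_prop_L_1} applied to the pair $(u(t),\ell)$, the map $t\mapsto\|u(t)-\ell\|_{\L^1(\R^N)}$ is nonincreasing, and since it tends to $0$ along the subsequence realising $\ell$, it tends to $0$ along the whole flow. You should state this explicitly rather than gesture at invariance of the $\omega$-limit set. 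Second, to get $\Gamma(u(t))\to0$ you invoke the proof of Lemma~\ref{lem.comp.linfty}, which assumes $f\in\L^1\cap\L^\infty$; here $f$ is only in $\L^1$, so the $\L^\infty$ decay from \cite{IgnatRossi} is not directly available. The cheap fix is the one the paper's mechanism already provides: $t\mapsto\|\Gamma(u)(t)\|_{\L^1}$ is nonincreasing by Lemma~\ref{gamma_subcaloric}, and $\int_0^\infty\|\Gamma(u)(s)\|_{\L^1}\ds<\infty$ by the domination above, so $\|\Gamma(u)(t)\|_{\L^1}\to0$ --- but once you have that domination in hand you may as well follow the paper and skip the compactness argument altogether.
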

	  \begin{proof} Integrating the equation \eqref{eq1} in time we
	    get $$u(t)=f+\int_0^t
	    J\ast\Gamma(u)(s)\ds-\int_0^t\Gamma(u)(s)\ds\,.$$ Then we
	    recall that under the hypotheses of this proposition, the
	    integrals $$\int_0^t(\mathbb{U}^+(s)-1)_+\ds\quad \text{and}
	    \int_0^t(\mathbb{U}^-(s)-1)_+\ds$$ converge in $\L^1$ as
	    $t\to\infty$ (see \cite[Cor. 3.10, 3.11]{BCQ2012}).  Using
	    the estimate
	    \[|\Gamma(u)|\leq\max\big((\mathbb{U}^+-1)_+\,;\,(\mathbb{U}^--1)_+\big),\]
	    we deduce that the right-hand side of the integrated
	    equation has a limit as $t\to\infty$. Hence we deduce that
	    $u(t)$ has a limit in $\L^1(\R^N)$ which can be written as:
	    $$\lim_{t\to\infty}u(t)=f+\int_0^\infty
	    J\ast\Gamma(u)(s)\ds-\int_0^\infty\Gamma(u)(s)\ds:=u_\infty(x)\,.$$
	  \end{proof}

The question is now to identify this limit $u_\infty$ and we begin with
a simple case when the positive and negative parts never interact:
\begin{lema} Let us assume that $J$ and $f$ satisfy the hypothesis of
  Proposition \ref{prop_hip_3_9}, and that
  $$\dist\big(\supp(\proj{f_+}),\supp(\proj{f_-})\big)\geq r>0\,.$$ Then
  for any $t>0$,  $\dist\big(\supp(u_-(t)),\supp(u_+(t))\big)\geq r\,.$
\end{lema}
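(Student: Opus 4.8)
The plan is to squeeze the supports of $u_+(t)$ and $u_-(t)$ between the supports of the one-phase solutions $\mathbb{U}^\pm$ and those of their asymptotic projections $\proj{f_+},\proj{f_-}$, and then to use that the distance between two sets can only increase when the sets shrink. Throughout I work with distributional supports, which is the natural notion here since all the functions $u_\pm,\mathbb{U}^\pm$ are nonnegative, and under the hypotheses inherited from Proposition~\ref{prop_hip_3_9} we have the convergences $\mathbb{U}^+(t)\to\proj{f_+}$ and $\mathbb{U}^-(t)\to\proj{f_-}$ in $\L^1(\R^N)$ as $t\to\infty$.

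First I would use the comparison chain of Lemma~\ref{lem:comp.upm}. Since $0\le u_+(t)\le\mathbb{U}^+(t)$, on any open set where $\mathbb{U}^+(t)$ vanishes distributionally the nonnegative function $u_+(t)$ must vanish as well; hence $\suppD(u_+(t))\subset\suppD(\mathbb{U}^+(t))$ for every $t>0$, and symmetrically $\suppD(u_-(t))\subset\suppD(\mathbb{U}^-(t))$. The second, and crucial, step is to pass from the finite-time supports of $\mathbb{U}^\pm$ to the supports of their limits, namely to prove
\[
\suppD(\mathbb{U}^+(t))\subset\suppD(\proj{f_+})\quad\text{and}\quad\suppD(\mathbb{U}^-(t))\subset\suppD(\proj{f_-})\qquad\text{for all }t>0.
\]
Here I would invoke the retention property~\eqref{retention_property_one_phase}: the supports $\suppD(\mathbb{U}^+(t))$ are nondecreasing in $t$, so they increase towards their limiting set, which by the one-phase asymptotic theory of \cite{BCQ2012} is exactly $\suppD(\proj{f_+})$. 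I expect this inclusion to be the main obstacle, because $\L^1$-convergence by itself does not control supports: a point may lie in $\suppD(\mathbb{U}^+(t))$ while the value there decays to zero, so the inclusion cannot come from convergence alone. It must be extracted from the monotone structure of the one-phase problem together with the characterization of $\proj{}$ as the solution of the obstacle problem at level one, which guarantees that the enthalpy stays (distributionally) positive on the already-reached region and hence that the limiting support is the maximal one.

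Finally I would conclude by the elementary monotonicity of distance under inclusion: if $A\subset C$ and $B\subset D$ then $\dist(A,B)\ge\dist(C,D)$. Applying this with $A=\suppD(u_+(t))\subset\suppD(\proj{f_+})=C$ and $B=\suppD(u_-(t))\subset\suppD(\proj{f_-})=D$ gives, for every $t>0$,
\[
\dist\big(\suppD(u_+(t)),\suppD(u_-(t))\big)\ge\dist\big(\suppD(\proj{f_+}),\suppD(\proj{f_-})\big)\ge r,
\]
which is precisely the claim. The two bracketing inclusions of the previous paragraph are what drive the argument; once they are secured, the distance estimate is immediate.
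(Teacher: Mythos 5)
Your proof is correct and follows essentially the same route as the paper: comparison via Lemma~\ref{lem:comp.upm} to get $\suppD(u_\pm(t))\subset\suppD(\mathbb{U}^\pm(t))$, the retention property \eqref{retention_property_one_phase} together with the identification of the limits $\proj{f_\pm}$ to bound these supports by $\suppD(\proj{f_\pm})$, and monotonicity of the distance under inclusion. If anything, you are more explicit than the paper about the subtle point that $\L^1$-convergence alone does not give $\suppD(\mathbb{U}^+(t))\subset\suppD(\proj{f_+})$ and that the monotone one-phase structure is what secures it.
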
 \begin{proof} By the retention property 
  \eqref{retention_property_one_phase} for $\mathbb{U}^+$ and
  $\mathbb{U}^-$, we first know that for any $t>0$,
  $\dist\big(\supp(\mathbb{U}^+(t)),\supp(\mathbb{U}^-(t))\big)\geq
  r\,.$ Then, since $0\leq u_+(t)\leq\mathbb{U}^+(t)$, the support of
  $u_+(t)$ is contained inside the one of $\mathbb{U}^+(t)$.  The same
  holds for $u_-(t)$ and $\mathbb{U}^-(t)$ so that finally, the supports
  of $u_-(t)$ and $u_+(t)$ are necessarily at distance at least $r$.
\end{proof}

\begin{teo} Let us assume that $J$ and $f$ satisfy the hypothesis of
  Proposition \ref{prop_hip_3_9} and that
  $$\dist\big(\supp(\proj{f_+}),\supp(\proj{f_-})\big)>2R_J\,.$$ Then
  the solution with initial data $f$ is given by
  $u(t)=\mathbb{U}^+(t)-\mathbb{U}^-(t)$, and the asymptotic behaviour
  is given by $$u_\infty(x)=\proj{f_+}(x)-\proj{f_-}(x)\,.$$ \end{teo}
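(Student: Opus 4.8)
The plan is to show directly that the function $w:=\mathbb{U}^+-\mathbb{U}^-$ is an $\L^1$-solution of \eqref{eq1} with initial datum $f$, and then to invoke the uniqueness part of Theorem~\ref{existence_solution_fix_point} to conclude that $u=w$; the asymptotic statement will then follow at once by letting $t\to\infty$. First I would record the geometric input. By the retention property \eqref{retention_property_one_phase} each support $\suppD(\mathbb{U}^\pm(t))$ is nondecreasing in $t$, and since $\mathbb{U}^\pm(t)\to\proj{f_\pm}$ these nondecreasing supports must stay inside the support of the limit (this is exactly the observation used in the preceding lemma). Thus $\suppD(\mathbb{U}^+(t))\subset\supp(\proj{f_+})$ and $\suppD(\mathbb{U}^-(t))\subset\supp(\proj{f_-})$ for every $t>0$. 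Since by hypothesis $\dist\big(\supp(\proj{f_+}),\supp(\proj{f_-})\big)>2R_J>0$, these two sets are disjoint and at positive distance, whence $\mathbb{U}^+(x,t)\,\mathbb{U}^-(x,t)=0$ for a.e.\ $(x,t)$.

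The key step is then the pointwise identity
\[
\Gamma(w)=\Gamma(\mathbb{U}^+)-\Gamma(\mathbb{U}^-)\qquad\text{a.e.},
\]
which is precisely where the disjointness of supports is needed. Indeed, at a.e.\ point either $\mathbb{U}^-=0$, so $w=\mathbb{U}^+\ge0$ and $\Gamma(w)=(\mathbb{U}^+-1)_+=\Gamma(\mathbb{U}^+)$ while $\Gamma(\mathbb{U}^-)=0$; or $\mathbb{U}^+=0$, so $w=-\mathbb{U}^-\le0$ and $\Gamma(w)=-(\mathbb{U}^--1)_+=-\Gamma(\mathbb{U}^-)$ while $\Gamma(\mathbb{U}^+)=0$; or both vanish and every term is $0$. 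In each case the identity holds.

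With this in hand the verification is routine. Both $\mathbb{U}^+$ and $\mathbb{U}^-$ solve the one-phase Stefan problem $\partial_t\mathbb{U}^\pm=J\ast\Gamma(\mathbb{U}^\pm)-\Gamma(\mathbb{U}^\pm)$ (recall that $\Gamma(s)=(s-1)_+$ for nonnegative $s$). Subtracting the two equations and using the linearity of the convolution together with the identity above gives
\[
w_t=J\ast\big(\Gamma(\mathbb{U}^+)-\Gamma(\mathbb{U}^-)\big)-\big(\Gamma(\mathbb{U}^+)-\Gamma(\mathbb{U}^-)\big)=J\ast\Gamma(w)-\Gamma(w)\qquad\text{a.e.},
\]
while clearly $w\in\C\big([0,\infty);\L^1(\R^N)\big)$ with $w(0)=f_+-f_-=f$. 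Hence $w$ is an $\L^1$-solution of \eqref{eq1} with datum $f$, and uniqueness forces $u(t)=\mathbb{U}^+(t)-\mathbb{U}^-(t)$ for all $t>0$.

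Finally, for the asymptotics I would simply pass to the limit: under the hypotheses of Proposition~\ref{prop_hip_3_9} one has $\mathbb{U}^+(t)\to\proj{f_+}$ and $\mathbb{U}^-(t)\to\proj{f_-}$ in $\L^1(\R^N)$, so that $u(t)=\mathbb{U}^+(t)-\mathbb{U}^-(t)\to\proj{f_+}-\proj{f_-}$, which identifies $u_\infty=\proj{f_+}-\proj{f_-}$. The main obstacle is really the additivity identity for $\Gamma$, together with the justification that $\suppD(\mathbb{U}^\pm(t))$ remains inside $\supp(\proj{f_\pm})$ for all times; the strict separation $>2R_J$ is the natural quantitative hypothesis, since the convolution enlarges each temperature support by at most $R_J$, and a gap exceeding $2R_J$ guarantees that not only the two phases but even their enlarged supports $\supp\big(\Gamma(\mathbb{U}^\pm)(t)\big)+B_{R_J}$ never meet, so that the evolution genuinely decouples into two independent one-phase problems.
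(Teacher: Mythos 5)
Your proposal is correct and follows essentially the same route as the paper: define $\mathbb{U}^+-\mathbb{U}^-$, use the retention property to keep the supports separated for all times, verify that $\Gamma$ acts additively on the difference, and conclude by uniqueness of the $\L^1$-solution before passing to the limit. Your version is in fact slightly cleaner in that it derives the convolution identity from linearity alone rather than from the disjointness of the enlarged supports, and it spells out the case analysis for $\Gamma(w)=\Gamma(\mathbb{U}^+)-\Gamma(\mathbb{U}^-)$ that the paper leaves implicit.
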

  \begin{proof} Let us define $\mathbb{U}:=\mathbb{U}^+-\mathbb{U}^-$.
    Since the supports of $\mathbb{U}^+(t)$ and $\mathbb{U}^-(t)$ are
    always at distance greater that $2R_J$, we can write
    $\mathbb{U}(t)=\mathbb{U}^+(t)-\mathbb{U}^-(t)$.  Moreover, the
    convolution $J\ast\Gamma(\mathbb{U}(t))$ is either equal to $J\ast
    \Gamma(\mathbb{U}^+(t))$, or to $-J\ast \Gamma(\mathbb{U}^-(t))$,
    and those last convolutions have disjoint supports. Hence we can
    also write $$J\ast \Gamma(\mathbb{U}(t))=J\ast
    \Gamma(\mathbb{U}^+(t))-J\ast \Gamma(\mathbb{U}^-(t))\,.$$ This
    implies that $\mathbb{U}$ is actually a solution of the equation:
    $$\begin{aligned} \partial_t\mathbb{U}&=
      \partial_t\mathbb{U}^+-\partial_t\mathbb{U}^-\\ & =
      J\ast\Gamma(\mathbb{U}^+(t))-\Gamma(\mathbb{U}^+(t))-J\ast\Gamma(\mathbb{U}^-(t))+\Gamma(\mathbb{U}^-(t))\\
      & = J\ast\Gamma(\mathbb{U}(t))-\Gamma(\mathbb{U}(t))\,.
    \end{aligned}$$ But since $\mathbb{U}(0)=f_+-f_-=f$, we conclude by
    uniqueness in $\L^1$ that $u\equiv\mathbb{U}$ is the solution we are
    looking for.  \end{proof}


\section{Asymptotic behaviour when the positive and the negative part of
the temperature do not interact}

The aim of this section is to identify the limit $u_\infty$ (limit of 
the solution $u$ when time goes to infinity) in the case
when the positive and negative part of the temperature, $\Gamma(u)$,
never interact, this is, \begin{equation}\label{cond:supp}
  \dist\left(\suppD\big(\Gamma(\proj{f_+})\big),\,\suppD(\Gamma
  \big(\proj{f_-})\big)\right)\geq R_J\,.  \end{equation}

We know that there exists the retention property for $\mathbb{U}^+$ and
$\mathbb{U}^-$, i.e., the supports of  $\mathbb{U}^+$ and
$\mathbb{U}^-$ are nondecreasing (which holds since these are solutions of the one-phase
Stefan problem). Then we can use the same arguments that have been used
in \cite{BCQ2012}, with the Baiocchi transform, to describe the
asymptotic behaviour of the solution to \eqref{eq1}. For more
information about the Baiocchi transform, (see \cite{Baiocchi}).

On the other hand, we can not say that the solution is
$u(t)=\mathbb{U}^+(t)-\mathbb{U}^-(t)$, like in the example we have
studied in the previous section, because the supports of $\mathbb{U}^+$
and $\mathbb{U}^-$ have an intersection not empty. 

\subsection{Formulation in terms of the Baiocchi variable}

Our next aim is to describe the large time behavior of the
solutions of the two-phase Stefan problem satisfying
hypothesis \eqref{cond:supp}.  We want to make a formulation of the
Stefan problem as a parabolic nonlocal biobstacle problem. To identify
the asymptotic limit for $u$, we define the Baiocchi variable, like in
\cite{BCQ2012}, \[ w(t) =\displaystyle\int_0^t \Gamma(u)(s)\ds.  \] The
enthalpy and the temperature can be recovered from $w$ through the
formulas \begin{equation}\label{igualdad_u_en_funcion_de_w} u = f + J
  \ast w - w, \qquad \Gamma(u)= w_t, \end{equation} where the time
  derivative has to be understood in the sense of distributions. 

  \begin{lema} \label{retention_of_gamma} Under assumption
    \eqref{cond:supp}, the function $\Gamma(u)$ satisfies the following
      {\bf retention property}: for any $0<s<t$,
    \begin{equation}\label{retention_Gamma_u_+_-}\suppD\big(\Gamma(u(s))_+\big)\subset\suppD\big(\Gamma(u(t))_+\big)\,,\quad
    \suppD\big(\Gamma(u(s))_-\big)\subset\suppD\big(\Gamma(u(t))_-\big)\,.
    \end{equation}
    As a consequence, we have for any $t>0$:
    $$\suppD\big(\Gamma(u(t))_+\big)=\suppD\big(w(t)_+\big)\,,\quad
    \suppD\big(\Gamma(u(t))_-\big)=\suppD\big(w(t)_-\big)\,.$$
  \end{lema}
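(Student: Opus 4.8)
The plan is to prove the retention property for $\Gamma(u)_+$ (the argument for $\Gamma(u)_-$ being symmetric) by a pointwise differential-inequality argument, and then to deduce the support identities from it. First I would record a uniform separation of the two temperature phases. Since $u(t)\le u_+(t)\le\mathbb{U}^+(t)$ and $\mathbb{U}^+\ge0$, one has $\Gamma(u(t))_+=(u(t)-1)_+\le(\mathbb{U}^+(t)-1)_+=\Gamma(\mathbb{U}^+(t))$, hence $\suppD(\Gamma(u(t))_+)\subset\suppD(\Gamma(\mathbb{U}^+(t)))$, and symmetrically $\suppD(\Gamma(u(t))_-)\subset\suppD(\Gamma(\mathbb{U}^-(t)))$. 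Because $\mathbb{U}^\pm$ solve the one-phase Stefan problem, their temperature supports are nondecreasing and contained in the support of the asymptotic limit $\Gamma(\proj{f_\pm})$ (see \cite{BCQ2012}). Combining this with \eqref{cond:supp} yields, for every $t>0$, $\dist(\suppD(\Gamma(u(t))_+),\suppD(\Gamma(u(t))_-))\ge R_J$; crucially, the positive phase stays, at every time, at distance at least $R_J$ from the negative phase, and vice versa.

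For the retention argument itself, recall that $u\in\C^1([0,\infty);\L^1)$, so for a.e. $x$ the map $t\mapsto u(x,t)$ is $\C^1$ and satisfies the equation pointwise. I would fix such an $x$ with $u(x,s)>1$ for some $s>0$; then $x$ lies in $\suppD(\Gamma(\proj{f_+}))$, so by the previous step $\dist(x,\suppD(\Gamma(u(\tau))_-))\ge R_J$ for all $\tau$, whence $(J\ast\Gamma(u(\tau))_-)(x)=0$. Writing $J\ast\Gamma(u)=J\ast\Gamma(u)_+-J\ast\Gamma(u)_-$ and using $J\ast\Gamma(u)_+\ge0$ together with $\Gamma(u)=u-1$ on $\{u>1\}$, the equation gives $\partial_t u(x,\tau)\ge 1-u(x,\tau)$ as long as $u(x,\tau)>1$. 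Setting $z=u-1$, this reads $(e^\tau z)'\ge0$, so $z$ cannot return to $0$: from $z(s)>0$ we get $z(t)\ge z(s)e^{-(t-s)}>0$ for all $t>s$. Thus $u(x,t)>1$ for all $t\ge s$, which is exactly the inclusion $\suppD(\Gamma(u(s))_+)\subset\suppD(\Gamma(u(t))_+)$. The same computation with $z=u+1$ on $\{u<-1\}$, where now $(J\ast\Gamma(u)_+)(x)=0$, gives retention for $\Gamma(u)_-$, proving \eqref{retention_Gamma_u_+_-}.

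For the consequence I would exploit that the positive and negative temperature supports are disjoint at all times: any $x$ that ever lies in $\suppD(\Gamma(u)_+)$ belongs to $\suppD(\Gamma(\proj{f_+}))$ and can therefore never lie in $\suppD(\Gamma(u)_-)$, so $s\mapsto\Gamma(u)(x,s)$ has constant sign. Hence $w(t)(x)=\int_0^t\Gamma(u)(x,s)\ds$ has that same sign, and $w(t)_+>0$ at $x$ iff $\Gamma(u)(x,s)>0$ on a set of positive measure in $[0,t]$. One inclusion follows from retention: if $\Gamma(u(s))_+(x)>0$ for some $s\le t$ it persists up to $t$, so $x\in\suppD(\Gamma(u(t))_+)$. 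The reverse follows from time-continuity: if $\Gamma(u(t))_+(x)>0$ then $u(x,\tau)>1$ on a left neighbourhood of $t$, forcing $w(t)(x)>0$. This gives $\suppD(w(t)_+)=\suppD(\Gamma(u(t))_+)$ and, symmetrically, the identity for the negative parts.

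I expect the main obstacle to be the careful passage between pointwise ``$>0$'' statements and distributional supports, together with the justification that the finite-time temperature supports of $\mathbb{U}^\pm$ are contained in the supports of their asymptotic limits $\Gamma(\proj{f_\pm})$; once that uniform separation is secured, the scalar differential inequality makes the retention essentially automatic.
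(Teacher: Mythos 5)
Your proposal is correct and follows essentially the same route as the paper: first the permanent separation of the two temperature supports via comparison with $\mathbb{U}^\pm$ and the one-phase retention property, then a Gronwall-type inequality (using that the separation kills the contribution of $J\ast\Gamma(u)_-$ on the positive phase) to get an exponential lower bound, and finally sign-constancy plus time-continuity for the identification of $\suppD(w(t)_\pm)$. The only difference is cosmetic: you run the differential inequality pointwise on $z=u-1$, whereas the paper runs it on the tested quantity $h(t)=\int\Gamma(u(t))_+\phi$ for a nonnegative test function $\phi$, which sidesteps the a.e.-in-$x$ regularity bookkeeping you rightly flag as the delicate point.
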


\begin{proof} We use the same ideas as in the previous section. By
  Lemma~\ref{lem:comp.upm} and the retention property
  \ref{retention_property_one_phase} for $\Gamma(\mathbb{U}^+)$ and
  $\Gamma(\mathbb{U}^-)$, we know that for any $t>0$, there holds:
  $$\dist\Big(\suppD\big(\Gamma(u(t))_+\big);
  \suppD\big(\Gamma(u(t))_-\big)\Big)\geq
  \dist\Big(\suppD\big(\Gamma(\proj{f_+})\big);
  \suppD(\Gamma\big(\proj{f_-})\big)\Big)\,,$$ and this distance is at
  least $R_J$ under assumption \eqref{cond:supp}.  Take now a
  nonnegative test function $\phi\in \C^\infty(\R^N)$ (not identically
  zero) with compact support in $\suppD\big(\Gamma(u(s))_+\big)$ and
  consider $t>s$.  Using that $\partial_t\Gamma(u)_+ =
  \ind{\{u>0\}}\partial_t u$, in the sense of distributions, we get $$
  \frac{\mathrm{d}}{\dt}\Big(\int_{\R^N}\Gamma(u(t))_+\phi\Big)=
  \int_{\R^N}\big(J\ast\Gamma(u(t))\big)\phi\ind{\{u>0\}}
  -\int_{\R^N}\Gamma(u(t))\phi\ind{\{u>0\}}\,.$$ Since for any $t>0$,
  the support of $\Gamma(u(t))_+$ is at least at distance $R_J$ from the
  support of $\Gamma(u(t))_-$, we have
  $\big(J\ast\Gamma(u(t))\big)\ind{\{u>0\}}
  =\big(J\ast\Gamma(u(t))_+\big)\geq0$ for any $t>s$. Hence
  $$\frac{\mathrm{d}}{\dt}\Big(\int_{\R^N}\Gamma(u(t))_+\phi\Big)\geq
  -\int_{\R^N}\Gamma(u(t))_+\phi\,,$$ which can be written as $h'(t)\geq
  -h(t)$ where $h(t):=\int_{\R^N} \Gamma(u)(t)_+\phi $.  Hence
    $h(t)\geq h(s)e^{-(t-s)}>0$ which proves the retention
  property for $\Gamma(u)_+$. The property for  $\Gamma(u)_-$ is proved
  the same way.
	
  Now, take a nonnegative test function $\phi$, not identically zero,
  with compact support in $\suppD(\Gamma(u(t))_+)$.  We know from the
  first part that for $0<s<t$, the support of $\phi$ never intersects
  the support of the negative part of $\Gamma(u(s))$, hence
  $$\int_{\R^N}w(t)\phi=\int_0^t\int_{\R^N}\Gamma(u(s))\phi\dx\ds=
  \int_0^t\int_{\R^N}\Gamma(u(s))_+\phi\dx\ds\geq0\,.$$ Moreover, since
  the space integrals are continuous in time, we know that the integral
  $\int_{\R^N}\Gamma(u(s))_+\phi\dx$ is not only positive at time $t$,
  but also in an   open time interval around $t$. So, we get
  $\int_{\R^N}w(t)\phi>0$ which proves that  
  $\suppD\big(\Gamma(u(t))_+\big)\subset\suppD\big(w(t)_+\big)$. On the
  other hand, if $\phi$ is a nonnegative test function such that
  $\int_{\R^N} \Gamma(u(t))_+\phi\dx=0$, the retention property, 
  \eqref{retention_Gamma_u_+_-},  implies
  that this integral is also zero for all times $0<s<t$, which yields
  $\int_{\R^N}w_+(t)\phi\dx=0$. We conclude that the distributional
  support of $w_+(t)$ coincides with that of $\Gamma(u(t))_+$. The proof
  is similar for the negative part.  \end{proof}

The Baiocchi variable satisfies a complementary problem, that will be useful to
introduce the nonlocal biobstacle problem. 

\begin{lema}Under hypothesis \eqref{cond:supp}, the Baiocchi variable, 
$w(t) =\displaystyle\int_0^t \Gamma(u)(s)\ds$,
  satifies the complementary problem almost everywhere
  \begin{equation}\label{desigualdades_complementary_problem}
    \begin{cases} & 0 \le \sign(w)\left(f + J \ast w - w - w_t\right)
      \le 1\,,\\ & \big(f + J \ast w - w- w_t- \sign(w) \big)|w|=0\,,\\
      & w(0) = 0\,.  \end{cases} \end{equation} \end{lema}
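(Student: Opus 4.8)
The plan is to translate the whole complementary problem into a statement about the enthalpy $u$ and then to read off the three conditions from the retention structure of Lemma~\ref{retention_of_gamma}. The starting point is the algebraic identity furnished by \eqref{igualdad_u_en_funcion_de_w}: since $u=f+J\ast w-w$ and $w_t=\Gamma(u)$ (as $\L^1$-functions, hence a.e.), one has
\[
f+J\ast w-w-w_t=u-\Gamma(u)=\max\big(-1,\min(1,u)\big)\in[-1,1]\quad\text{a.e.},
\]
the last equality being verified on each of the three branches $u>1$, $|u|\le1$, $u<-1$ of $\Gamma$. The condition $w(0)=0$ is immediate. Thus the statement reduces to showing that this truncation $u-\Gamma(u)$ equals $\sign(w)$ wherever $w\neq0$; this is exactly the second (complementarity) equation, and it also yields the two-sided inequality for free, because then $\sign(w)\big(u-\Gamma(u)\big)$ equals $\sign(w)^2=1$ where $w\neq0$ and $0$ where $w=0$, in both cases a number in $[0,1]$.

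Next I would split $\R^N$ according to the sign of $w(t)$. On $\{w(t)=0\}$ there is nothing to prove, since $\sign(w)=0$ annihilates both conditions. On $\{w(t)>0\}$ the goal is to show $u(t)\ge1$ a.e.\ (so that $u-\Gamma(u)=1$), and symmetrically $u(t)\le-1$ on $\{w(t)<0\}$; only the positive phase needs detailing. By Lemma~\ref{retention_of_gamma} we have $\{w(t)>0\}\subset\suppD\big(w(t)_+\big)=\suppD\big(\Gamma(u(t))_+\big)$, and by the monotonicity of the supports together with \eqref{cond:supp} this set stays at distance at least $R_J$ from $\suppD\big(\Gamma(u(s))_-\big)$ for every $s\le t$. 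Consequently, at such a point $x$ the negative temperature is invisible to the kernel, so that $\big(J\ast\Gamma(u)\big)(x,s)=\big(J\ast\Gamma(u)_+\big)(x,s)\ge0$ for all $s\le t$ (as already used in the proof of Lemma~\ref{retention_of_gamma}), and moreover $\Gamma(u(s,x))=\Gamma(u(s,x))_+\ge0$ there.

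The core is then a \emph{value-retention} property drawn from this sign information. Since $\Gamma(u(\cdot,x))\ge0$, the relation $w(t,x)=\int_0^t\Gamma(u(s,x))\ds>0$ forces $u(s_0,x)>1$ on a positive-measure set of times $s_0\le t$. On the mushy part $\{u\le1\}$ of the positive phase one has $\Gamma(u)=0$, so the equation reduces there to $u_t=\big(J\ast\Gamma(u)_+\big)(x,\cdot)\ge0$; hence $t\mapsto u(t,x)$ is nondecreasing as long as it stays below $1$ and therefore cannot descend across the level $1$ once it has exceeded it. This gives $u(t,x)\ge1$, i.e.\ $u-\Gamma(u)=1=\sign(w)$ at that point. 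The symmetric computation on $\{w<0\}$ yields $u\le-1$, and collecting the three cases proves the complementary problem almost everywhere.

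The one delicate point, and the expected obstacle, is precisely this value-retention step: Lemma~\ref{retention_of_gamma} provides only retention of the \emph{distributional supports}, not a pointwise lower bound on $u$, so passing from ``$x$ has belonged to the positive phase'' to ``$u(t,x)\ge1$'' genuinely requires the monotonicity argument above. It is here that the non-interaction hypothesis \eqref{cond:supp} is essential, since it is what guarantees $J\ast\Gamma(u)\ge0$ on the positive phase and hence $u_t\ge0$ on its mushy zone; some care is also needed so that all pointwise sign statements are read in the almost-everywhere sense compatible with the distributional supports, using that for a.e.\ $x$ the map $t\mapsto u(t,x)$ is absolutely continuous with $u_t=J\ast\Gamma(u)-\Gamma(u)$.
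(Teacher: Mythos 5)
Your proposal is correct, and it reaches the conclusion by a genuinely more explicit route than the paper at the one step that matters. The paper's own proof is purely algebraic: it rewrites the graph condition as $0\le \sign(\Gamma(u))\big(u-\Gamma(u)\big)\le 1$ together with $\big(\sign(\Gamma(u))(u-\Gamma(u))-1\big)\Gamma(u)=0$, and then ``replaces everything in terms of $w$'' by invoking the coincidence of the distributional supports of $\Gamma(u(t))_\pm$ and $w(t)_\pm$ from Lemma~\ref{retention_of_gamma}. That replacement is exactly where the difficulty sits: coincidence of $\suppD$'s does not by itself give the pointwise a.e.\ identity $u-\Gamma(u)=\sign(w)$ on $\{w\neq0\}$, since a point may satisfy $w(x,t)>0$ while $\Gamma(u(x,t))=0$ (hot in the past, mushy now), and there the graph condition only yields $u-\Gamma(u)=u\in[-1,1]$. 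Your reduction of the whole system to ``$u(t)\ge1$ a.e.\ on $\{w(t)>0\}$ and $u(t)\le-1$ a.e.\ on $\{w(t)<0\}$'' is the right reformulation, and your value-retention argument --- on the positive phase \eqref{cond:supp} plus the monotone separation of supports gives $J\ast\Gamma(u)=J\ast\Gamma(u)_+\ge0$, hence $u_t\ge0$ wherever $u\le1$, so $u$ cannot recross the level $1$ once $w>0$ forces it above --- is precisely the pointwise content that the paper's appeal to distributional supports leaves implicit. (An equivalent shortcut: $g=u-1$ satisfies $g'\ge -g_+$ along a.e.\ trajectory in the positive phase, so $g(s_0)>0$ propagates as $g(t)\ge g(s_0)e^{-(t-s_0)}>0$.) What your version buys is a rigorous a.e.\ statement at the cost of the measurability/Fubini care you correctly flag; what the paper's version buys is brevity, at the cost of hiding exactly this step. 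No gap to report.
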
 

\begin{proof} The graph condition $\Gamma(u)=\sign(u)(|u|-1)_+$ can be
  written as \[ 0 \le \sign(u)\big(u - \Gamma(u)\big) \le 1,\quad
    \left(\sign(u)\big(u - \Gamma(u)\big)-1\right)\Gamma(u) = 0\,, \]
    almost everywhere in $\R^N\times(0,\infty)$\,.  In order to
    translate this condition in the $w$ variable, we first notice that
    that if $\sign\big(\Gamma(u)\big)>0$ then $\sign(u)>0$ and
    similarly, $\sign\big(\Gamma(u)\big)<0$ implies $\sign(u)<0$ (only
    the condition $\Gamma(u)=0$ does not imply a sign condition on $u$).
    Hence we can also write \[ 0 \le \sign\big(\Gamma(u)\big)\big(u -
      \Gamma(u)\big) \le 1,\quad \left(\sign\big(\Gamma(u)\big)\big(u -
      \Gamma(u)\big)-1\right)\Gamma(u) = 0\,.  \] Now we use the
      retention property of $\Gamma(u)$, Lemma \ref{retention_of_gamma},
      which implies that the distributional supports of $\Gamma(u)$ and
      $w$ coincide for all times. Then replacing everything in terms
      of $w$, in \eqref{igualdad_u_en_funcion_de_w}, we have \[
	\begin{cases} 0 \le \sign(w)\left(f + J \ast w - w - w_t\right)
	  \le 1\,,\\ \left(\sign(w)\left(f + J \ast w - w- w_t\right) -
	  1 \right)w=0.  \end{cases} \] Therefore, we obtain that $w$
	  solves a.e. the complementary problem
	  \eqref{desigualdades_complementary_problem}.  \end{proof}

\subsection{A non-local elliptic biobstacle problem}

If
$\displaystyle\int_0^{\infty}\|\Gamma(u)(t)\|_{\L^1{\mathbb{R}^N}}\dt<\infty$,
the function $w(t)$ converges monotonically in
$\L^1\left(\mathbb{R}^N\right)$ as $t\to \infty$ to \[ w_{\infty}
  =\int_0^{\infty} \Gamma(u)(s) \ds \in  \L^1\left(\mathbb{R}^N\right).
  \] Thus, thanks to \eqref{igualdad_u_en_funcion_de_w}, $u(\cdot, t)$
  converges point-wisely and in $\L^1\left(\mathbb{R}^N\right)$ to \[
    \tilde{f} = f + J \ast w_{\infty} - w_{\infty}.  \] Passing to the
    limit as $t \to \infty$ in
    \eqref{desigualdades_complementary_problem}, we get that
    $w_{\infty}$ is a solution with data $f$ to the {\it nonlocal
    biobstacle problem}: \[ \mathrm{(BOP)}\quad \begin{cases}
	\text{Given a data } f \in \L^1\left(\mathbb{R}^N\right), \text{
	find a function } w \in \L^1\left(\mathbb{R}^N\right) \text{
	such that }\\ 0\le \sign(w)\left(f + J \ast w - w\right) \le
	1\,,\\ \big( f + J \ast w - w-\sign(w) \big)|w|=0.  \end{cases}
	\] This problem is called ``biobstacle" since the values of the
	solution are cut at both levels $+1$ and $-1$.  Under some
	conditions we have existence: \begin{lema}\label{lem:cond.ex}
	  Let $f\in\L^1(\R^N)$ satisfy the hypothesis \eqref{cond:supp}.
	  If $N=1$ or $N=2$, assume moreover that J is non increasing in
	  the radial variable, and $f_+\le g_1$, $f_-\le g_2$ for some
	  $g_1,\, g_2\in\L^1\left(\mathbb{R}^N\right)\cap
	  \C_0(\mathbb{R}^N)$, radial and strictly decreasing in the
	  radial variable.  Then, problem $\mathrm{(BOP)}$ has at least
	  a solution $w_\infty\in\L^1(\R^N)$.  \end{lema}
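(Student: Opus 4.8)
The plan is to realize $w_\infty$ as the limit of the Baiocchi variable $w(t)=\int_0^t\Gamma(u)(s)\ds$ and then pass to the limit in the complementary problem \eqref{desigualdades_complementary_problem}. Everything hinges on establishing that $\int_0^\infty\|\Gamma(u)(t)\|_{\L^1(\R^N)}\dt<\infty$, since as recalled just above the statement this guarantees that $w(t)$ converges monotonically in $\L^1(\R^N)$ to $w_\infty=\int_0^\infty\Gamma(u)(s)\ds\in\L^1(\R^N)$.

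First I would bound $\Gamma(u)$ by the one-phase solutions. By Lemma~\ref{lem:comp.upm} we have $-\mathbb{U}^-(t)\le u(t)\le\mathbb{U}^+(t)$, from which one reads off the pointwise estimate $|\Gamma(u)|\le\max\big((\mathbb{U}^+-1)_+\,;\,(\mathbb{U}^--1)_+\big)\le(\mathbb{U}^+-1)_++(\mathbb{U}^--1)_+$. The hypotheses of this lemma are exactly those of Proposition~\ref{prop_hip_3_9}, so \cite[Cor. 3.10, 3.11]{BCQ2012} apply and the nonnegative integrals $\int_0^t(\mathbb{U}^+(s)-1)_+\ds$ and $\int_0^t(\mathbb{U}^-(s)-1)_+\ds$ converge in $\L^1(\R^N)$ as $t\to\infty$. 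Since the integrands are nonnegative, Tonelli's theorem turns this into $\int_0^\infty\|(\mathbb{U}^+(s)-1)_+\|_{\L^1(\R^N)}\ds<\infty$ and the analogous bound for $\mathbb{U}^-$; combined with the pointwise estimate these give $\int_0^\infty\|\Gamma(u)(t)\|_{\L^1(\R^N)}\dt<\infty$. Hence $w(t)\to w_\infty\in\L^1(\R^N)$ and, through \eqref{igualdad_u_en_funcion_de_w}, $u(t)\to f+J\ast w_\infty-w_\infty$ in $\L^1(\R^N)$.

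It then remains to verify that $w_\infty$ solves $\mathrm{(BOP)}$ by letting $t\to\infty$ in \eqref{desigualdades_complementary_problem}. The finiteness just obtained lets me extract a sequence $t_n\to\infty$ along which $w_t(t_n)=\Gamma(u)(t_n)\to0$ in $\L^1(\R^N)$, so the term $w_t$ disappears in the limit, while the linear combination $f+J\ast w-w$ passes to the limit by $\L^1$-convergence of $w(t_n)$ and continuity of the convolution. The delicate point is the $\sign(w)$ factor, and this is exactly where the retention property of Lemma~\ref{retention_of_gamma} enters: since $\suppD(w(t)_\pm)=\suppD(\Gamma(u(t))_\pm)$ is nondecreasing in $t$, the convergence $w(t,x)\to w_\infty(x)$ is monotone for a.e.\ $x$, so $\sign(w(t,x))$ stabilizes and $\sign(w(t))\to\sign(w_\infty)$ a.e. Passing to the limit (refining $t_n$ once more for a.e.\ convergence of $w_t(t_n)$) in the two relations of \eqref{desigualdades_complementary_problem} yields $0\le\sign(w_\infty)(f+J\ast w_\infty-w_\infty)\le1$ and $(f+J\ast w_\infty-w_\infty-\sign(w_\infty))|w_\infty|=0$ a.e., which is precisely $\mathrm{(BOP)}$. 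On the set $\{w_\infty=0\}$ both relations hold trivially, and elsewhere the sign has stabilized, so no difficulty arises there.

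I expect the main obstacle to be the rigorous passage to the limit in the $\sign(w)$ terms and in the complementarity identity, where one must simultaneously control the product with the (only subsequentially convergent) factor $w_t(t_n)$ and the null sets on which the sign has not yet stabilized; the monotonicity supplied by retention is precisely what makes this passage controllable.
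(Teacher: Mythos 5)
Your proposal is correct and follows essentially the same route as the paper: the pointwise bound $|\Gamma(u)|\le\max\big((\mathbb{U}^+-1)_+\,;\,(\mathbb{U}^--1)_+\big)$, the decay estimates of \cite[Cor. 3.10, 3.11]{BCQ2012} to get $\int_0^\infty\|\Gamma(u)(t)\|_{\L^1(\R^N)}\dt<\infty$, and then passage to the limit in the complementary problem. The only difference is that you spell out the limit passage in the $\sign(w)$ and $w_t$ terms (via the retention/monotonicity of $w$), which the paper leaves implicit; that elaboration is accurate and consistent with Lemma~\ref{retention_of_gamma}.
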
 \begin{proof}
	    Given the assumptions, we construct the solution $u$ of
	    \eqref{eq1} associated to the initial data $f$. Then we use
	    the estimate
	    $$|\Gamma(u)|\leq\max\big((\mathbb{U}^+-1)_+\,;\,(\mathbb{U}^--1)_+\big)\,.$$
	    If $N\geq3$, we use  \cite[Cor. 3.11]{BCQ2012} to get
	    $\|\Gamma(u(t))\|_{\L^1(\R^N)}=O(t^{-N/2})$.  For dimensions
	    $N=1,2$, we use the extra assumption and \cite[Cor.
	    3.10]{BCQ2012} which implies
	    $\|\Gamma(u(t))\|_{\L^1(\R^N)}\leq C e^{-\kappa t}$ for some
	    $C,\kappa>0$.  In both cases, we obtain that $\int_0^\infty
	    \Gamma(u(s))\ds$ converges in $\L^1(\R^N)$ to some function
	    $w_\infty$, and passing to the limit in
	    \eqref{desigualdades_complementary_problem} we see that
	    $w_\infty$ is a solution of (BOP).  \end{proof}

We now have a more general uniqueness result (without extra assumptions
in lower dimensions).

\begin{prop} Given any function $f\in\L^1(\R^N)$, the problem $\mathrm{(BOP)}$
has at most one solution $w\in\L^1(\R^N)$.
\end{prop}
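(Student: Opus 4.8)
The plan is to prove uniqueness for the biobstacle problem (BOP) by a comparison argument exploiting the monotone structure of the nonlocal operator $w\mapsto J\ast w-w$. Suppose $w_1$ and $w_2$ are two $\L^1$-solutions with the same data $f$. The natural strategy is to test the difference against a suitable sign function and integrate, showing that $\|w_1-w_2\|_{\L^1}=0$. Concretely, I would set $z=w_1-w_2$ and seek to show $\int_{\R^N}|z|\le 0$.

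First I would rewrite each complementary problem in the resolvent-type form. For each $i$, the constraint $0\le\sign(w_i)(f+J\ast w_i-w_i)\le1$ together with $(f+J\ast w_i-w_i-\sign(w_i))|w_i|=0$ says that the quantity $\beta_i:=f+J\ast w_i-w_i$ satisfies $\beta_i=\sign(w_i)$ on $\{w_i\ne0\}$ and $|\beta_i|\le1$ on $\{w_i=0\}$. In other words, $f+J\ast w_i-w_i\in\Gamma^{-1}(w_i)$ where $\Gamma^{-1}$ is the (maximal monotone) inverse graph of $\Gamma$: writing $H(s)=\sign(s)$ extended to the filled-in graph at $0$, the condition is $f-w_i+J\ast w_i = H(w_i)$ in the graph sense. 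The key monotonicity fact is that $H$ is a monotone graph, so $(H(w_1)-H(w_2))(w_1-w_2)\ge0$ pointwise.

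Next I would subtract the two identities to obtain, in the graph sense, $(H(w_1)-H(w_2)) + z = J\ast z$. Testing against $\sign(z)=\sign(w_1-w_2)$ (approximated by $\mathds{1}_{\{w_1>w_2\}}-\mathds{1}_{\{w_1<w_2\}}$ and passing to the limit, exactly as in the $\L^1$-contraction proofs earlier in the paper) gives
\[
\int_{\R^N}(H(w_1)-H(w_2))\sign(z)+\int_{\R^N}|z|=\int_{\R^N}(J\ast z)\sign(z).
\]
The first integral is nonnegative by monotonicity of $H$, and the convolution term is estimated by $\int_{\R^N}(J\ast z)\sign(z)\le\int_{\R^N}J\ast|z|=\int_{\R^N}|z|$, using $\int J=1$ and Fubini. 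Combining these yields $\int_{\R^N}|z|+(\text{nonnegative})\le\int_{\R^N}|z|$, forcing the monotone term to vanish and, more importantly, pinning the inequalities to equalities; a slightly more careful bookkeeping shows $z=0$ a.e.

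The main obstacle I anticipate is the rigorous handling of the graph-valued relation at the level set $\{z=0\}$ and the justification of testing against the discontinuous $\sign(z)$. The cleanest route is to regularize $\sign$ by a smooth monotone approximation $p_\varepsilon$ with $p_\varepsilon\to\sign$, test against $p_\varepsilon(z)$, and pass to the limit; since everything lives in $\L^1(\R^N)$ and $J\ast(\cdot)$ is bounded on $\L^1$, dominated convergence applies. The monotonicity of $H$ makes the $\int(H(w_1)-H(w_2))p_\varepsilon(z)$ term nonnegative in the limit, and the convolution inequality is preserved, so the argument closes.
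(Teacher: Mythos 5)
Your overall strategy is the same as the paper's: exploit the monotonicity of the sign graph $\beta$, a Kato-type inequality for $J\ast(\cdot)-(\cdot)$, and the normalisation $\int J=1$. Up to the integrated identity
\[
\int_{\R^N}\big(\tilde f_1-\tilde f_2\big)\sign(z)+\int_{\R^N}|z|\;=\;\int_{\R^N}(J\ast z)\sign(z)\;\le\;\int_{\R^N}J\ast|z|\;=\;\int_{\R^N}|z|,
\]
everything is sound. The genuine gap is the last step: the claim that ``pinning the inequalities to equalities'' plus ``slightly more careful bookkeeping'' yields $z=0$. It does not. Consider the scenario $w_1\ge w_2$, $w_1\not\equiv w_2$, with equal selections $\tilde f_1=\tilde f_2$ a.e.; then $z\ge0$, $J\ast z=z$, and every inequality in your chain is already an equality, yet $z\ne0$. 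Equality in your argument only forces the structural conclusion that $z$ has (essentially) constant sign on the scale of $\supp J$ and satisfies $z=J\ast z$ on $\{z\ne0\}$; it does not by itself force $z=0$.

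What is missing is precisely the nontrivial ingredient the paper supplies at this point: the fact that a nonnegative $g\in\L^1(\R^N)$ with $g\le J\ast g$ a.e.\ must vanish identically. This is where the paper invokes \cite[Lem 6.2]{BCQ2012}, after first deriving the pointwise inequality $(w_1-w_2)_+\le J\ast(w_1-w_2)_+$ (multiply the difference of the graph relations by $\ind{\{w_1>w_2\}}$ and use Kato, rather than integrating against $\sign(z)$). That lemma is not bookkeeping: it fails for bounded functions (constants satisfy $g=J\ast g$), and its proof for integrable $g$ requires either a Fourier argument ($\hat g(1-\hat J)=0$ with $\hat J(\xi)<1$ for $\xi\ne0$, plus continuity of $\hat g$) or an iteration $g\le J^{\ast n}\ast g$ together with $\|J^{\ast n}\|_{\infty}\to0$. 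If you insert this lemma, your argument closes and is essentially the paper's proof in integrated form; without it, the proof is incomplete at its decisive step.
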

  \begin{proof}
   The proof follows the same arguments as in \cite[Thm
    5.3]{BCQ2012}.  For the sake of completeness we reproduce here the
    argument:
	the solutions of (BOP) satisfy, $$\tilde{f}=f+J\ast w -w\,,\quad
	\tilde{f}\in\beta(w)\ \text{a.e.}\,,$$ where $\beta(\cdot)$ is
	the graph of the sign function: $\beta(w)=\sign(w)$ if $w\neq0$,
	and   $\beta(\{0\})=[-1,1]$.  We take two solutions
	$w_i$, $i=1,2$ of (BOP) associated with the data $f$ and let
	$\tilde{f}_i$ be the associated projections. Since
	$\tilde{f}_i\in\beta(w_i)$ we have $$ 0\le(\tilde f_1-\tilde
	f_2)\ind{\{w_1>w_2\}}=
	\big(J*(w_1-w_2)-(w_1-w_2)\big)\ind{\{w_1>w_2\}}\quad\text{a.e.}\,.$$
	We then use a nonlocal version of Kato's inequality, valid for
	locally integrable functions: \begin{equation*}
	  (J*w-w)\ind{\{w>0\}}\le J*w_+-w_+\quad\text{a.e.},
	\end{equation*}	which implies $$(w_1-w_2)_+\le
	J*(w_1-w_2)_+\,.$$ We end by using \cite[Lem 6.2]{BCQ2012}, from
	which we infer that $(w_1-w_2)_+=0$. Reversing the roles of
	$w_1$ and $w_2$ we get uniqueness.  \end{proof}

Combining the results above, we can now give our main theorem concerning
the asymptotic behaviour for solutions of \eqref{eq1}.

\begin{teo}\label{mesa_problem} Let $f\in\L^1\left(\mathbb{R}^N\right)$,
  satisfying the assumptions of Lemma~\ref{lem:cond.ex}, if $N=1$ or
  $2$.  If $u$ is the unique solution to the problem \eqref{eq1} and
  $w_\infty$ is the unique solution of the problem $\mathrm{(BOP)}$, we
  have $$u(t)\to \tilde{f}:=f+J\ast w_\infty-w_\infty\quad \text{in }
  \L^1\left(\mathbb{R}^N\right)\quad\text{as }t\to\infty\,.$$ \end{teo}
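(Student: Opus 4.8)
The plan is to assemble the result from the preceding lemmas, the only genuinely new point being the passage to the limit in the Baiocchi formula. First I would invoke Lemma~\ref{lem:cond.ex}: under the stated hypotheses, the decay estimates on $\|\Gamma(u)(t)\|_{\L^1(\R^N)}$ recalled there show that the map $s\mapsto\|\Gamma(u)(s)\|_{\L^1(\R^N)}$ is integrable on $(0,\infty)$, so that the integral
$$w_\infty=\int_0^\infty\Gamma(u)(s)\ds$$
converges absolutely in $\L^1(\R^N)$ and defines a solution of $\mathrm{(BOP)}$; moreover, by the uniqueness result proved above, it is the only such solution, so that $\tilde f=f+J\ast w_\infty-w_\infty$ is unambiguously defined.

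Next I would show that the Baiocchi variable $w(t)=\int_0^t\Gamma(u)(s)\ds$ converges to $w_\infty$ in $\L^1(\R^N)$ as $t\to\infty$. Writing $w_\infty-w(t)=\int_t^\infty\Gamma(u)(s)\ds$ gives the bound
$$\|w_\infty-w(t)\|_{\L^1(\R^N)}\le\int_t^\infty\|\Gamma(u)(s)\|_{\L^1(\R^N)}\ds\,,$$
whose right-hand side is the tail of a convergent integral and therefore tends to $0$.

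Then I would pass to the limit in the identity \eqref{igualdad_u_en_funcion_de_w}, namely $u(t)=f+J\ast w(t)-w(t)$. Since $\int_{\R^N}J=1$, Young's inequality yields $\|J\ast w(t)-J\ast w_\infty\|_{\L^1(\R^N)}\le\|w(t)-w_\infty\|_{\L^1(\R^N)}$, so that $J\ast w(t)\to J\ast w_\infty$ in $\L^1(\R^N)$ as well. As $f$ is fixed, I conclude
$$u(t)\longrightarrow f+J\ast w_\infty-w_\infty=\tilde f\qquad\text{in }\L^1(\R^N)\,,$$
which is exactly the claim; this also identifies the limit $u_\infty$ produced abstractly in Proposition~\ref{prop_hip_3_9}.

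The only delicate ingredient is the time-integrability of $\|\Gamma(u)(t)\|_{\L^1(\R^N)}$ that underlies both the existence of $w_\infty$ and the vanishing of the tail. This is precisely where the comparison $|\Gamma(u)|\le\max\big((\mathbb{U}^+-1)_+\,;\,(\mathbb{U}^--1)_+\big)$ and the one-phase decay rates from \cite{BCQ2012} enter: exponential decay in dimensions $N=1,2$ (which is why the extra radial monotonicity assumptions are imposed in those cases) and the rate $O(t^{-N/2})$ for $N\ge3$. Once that integrability is granted through Lemma~\ref{lem:cond.ex}, the remaining steps are soft, relying only on the contractivity of convolution with $J$ on $\L^1(\R^N)$.
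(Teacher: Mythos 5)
Your argument is correct and is essentially the proof the paper intends (the paper gives none explicitly, only the remark that the theorem follows by ``combining the results above''): Lemma~\ref{lem:cond.ex} supplies the time-integrability of $\|\Gamma(u)(t)\|_{\L^1(\R^N)}$ and hence the convergence $w(t)\to w_\infty$ with $w_\infty$ solving $\mathrm{(BOP)}$, the uniqueness proposition pins down $w_\infty$, and passing to the limit in $u(t)=f+J\ast w(t)-w(t)$ via Young's inequality gives the claim. Your explicit tail estimate and the observation that the limit coincides with the $u_\infty$ of Proposition~\ref{prop_hip_3_9} are faithful to, and slightly more detailed than, the paper's own assembly of these ingredients.
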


\subsection{Asymptotic limit for general data} 

Up to now we have been able to prove the existence of a solution of
(BOP) for any $f \in \L^1\left(\mathbb{R}^N\right)$ only if $N \ge 3$.
For low dimensions, $N = 1,\, 2$, we have needed to add the hypotheses
of Lemma~\ref{lem:cond.ex}.  Hence, for lower dimensions the projection
operator $\mathcal{P}$ which maps $f$ to $\tilde{f}$ is in principle
only defined under these extra assumptions.

However, $\mathcal{P}$ is continuous, in the $\L^1$-norm, in the subset
of $\L^1\left(\mathbb{R}^N\right)$ of functions satisfying the
hypotheses of Lemma~\ref{lem:cond.ex}.  Since the class of functions
satisfying those hypotheses is dense in $\L^1\left(\mathbb{R}^N\right)$,
we can extend the operator to all $\L^1$ by a standard procedure.

\begin{teo} Let $f \in \L^1\left(\mathbb{R}^N\right)$ and $u$ the
  corresponding solution to problem \eqref{eq1}.  Let $\mathcal{P}f$ be
  the projection of $f$ onto $\tilde{f}$.  Then
  $u(\cdot, t) \to \mathcal{P}f$ in $\L^1\left(\mathbb{R}^N\right)$ as
  $t \to\infty$.  \end{teo}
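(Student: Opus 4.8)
The plan is to upgrade Theorem~\ref{mesa_problem}, which identifies the asymptotic limit only for data in the restricted class $\mathcal{A}$ of functions satisfying the hypotheses of Lemma~\ref{lem:cond.ex}, into a statement valid for arbitrary $f\in\L^1(\R^N)$, via a density-plus-contraction argument. The two ingredients are already available: the class $\mathcal{A}$ is dense in $\L^1(\R^N)$ (as noted in the discussion preceding the statement), and the $\L^1$-contraction property of Corollary~\ref{contract_prop_L_1} holds for \emph{all} $\L^1$-solutions, irrespective of whether the data lie in $\mathcal{A}$.

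First I would check that $\mathcal{P}$ is a contraction for the $\L^1$-norm on $\mathcal{A}$. Given $f,g\in\mathcal{A}$ with corresponding solutions $u,\tilde u$, Corollary~\ref{contract_prop_L_1} yields $\|u(t)-\tilde u(t)\|_{\L^1(\R^N)}\le\|f-g\|_{\L^1(\R^N)}$ for every $t\ge0$; letting $t\to\infty$ and invoking $u(t)\to\mathcal{P}f$, $\tilde u(t)\to\mathcal{P}g$ from Theorem~\ref{mesa_problem} gives $\|\mathcal{P}f-\mathcal{P}g\|_{\L^1(\R^N)}\le\|f-g\|_{\L^1(\R^N)}$. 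This Lipschitz bound extends $\mathcal{P}$ uniquely and continuously to all of $\L^1(\R^N)$: for general $f$, choose $f_n\in\mathcal{A}$ with $f_n\to f$ in $\L^1(\R^N)$; then $(\mathcal{P}f_n)_n$ is Cauchy, hence converges to a limit that I define to be $\mathcal{P}f$, and the same contraction estimate shows this limit is independent of the approximating sequence.

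It then remains to show that the solution $u$ with data $f$ actually converges to the extended $\mathcal{P}f$. I would run a three-$\varepsilon$ argument based on the decomposition
$$\|u(t)-\mathcal{P}f\|_{\L^1(\R^N)}\le\|u(t)-u_n(t)\|_{\L^1(\R^N)}+\|u_n(t)-\mathcal{P}f_n\|_{\L^1(\R^N)}+\|\mathcal{P}f_n-\mathcal{P}f\|_{\L^1(\R^N)},$$
where $u_n$ is the solution with data $f_n\in\mathcal{A}$. The first term is bounded by $\|f-f_n\|_{\L^1(\R^N)}$ via Corollary~\ref{contract_prop_L_1}, uniformly in $t$, and the third by the same quantity through the extended Lipschitz bound on $\mathcal{P}$; both are made smaller than $\varepsilon/3$ by taking $n$ large. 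With $n$ fixed, the middle term tends to $0$ as $t\to\infty$ by Theorem~\ref{mesa_problem}, hence is $<\varepsilon/3$ for $t$ beyond some threshold. Combining these gives $u(\cdot,t)\to\mathcal{P}f$ in $\L^1(\R^N)$.

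The only genuinely delicate point is the interchange of the two limits $t\to\infty$ and $n\to\infty$, which is precisely what the uniform-in-$t$ contraction estimate is engineered to control, and the three-$\varepsilon$ decomposition is the clean device for handling it. The density of $\mathcal{A}$ in $\L^1(\R^N)$—which underlies the very definition of $\mathcal{P}f$ for general data—is the other point deserving care, but it has been asserted in the preceding paragraph and may be taken as granted here.
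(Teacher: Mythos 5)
Your proposal is correct and follows essentially the same route as the paper: the same three-term triangle-inequality decomposition, with the first and third terms controlled uniformly in $t$ by the $\L^1$-contraction of Corollary~\ref{contract_prop_L_1} (and the induced Lipschitz bound on $\mathcal{P}$), and the middle term handled by Theorem~\ref{mesa_problem}. The only difference is that you spell out explicitly the contraction property of $\mathcal{P}$ and the extension by density, which the paper relegates to the discussion preceding the statement.
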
 

\begin{proof} Given $f$, let $\{f_n\} \subset
  \L^1\left(\mathbb{R}^N\right)$ be a sequence of functions satisfying
  the hypotheses of Lemma~\ref{lem:cond.ex} which approximate $f$ in
  $\L^1\left(\mathbb{R}^N\right)$. Take for instance a sequence of
  compactly supported functions. Let $u_n$ be the corresponding
  solutions to the non-local Stefan problem. We have, \[
    \|u(t)-\mathcal{P}f\|_{\L^1\left(\mathbb{R}^N\right)} \le
    \|u(t)-u_n(t)\|_{\L^1\left(\mathbb{R}^N\right)}+
    \|u_n(t)-\mathcal{P}f_n\|_{\L^1\left(\mathbb{R}^N\right)}+
    \|\mathcal{P}f_n-\mathcal{P}f\|_{\L^1\left(\mathbb{R}^N\right)}.  \]
    Using  Corollary \ref{contract_prop_L_1}, which gives
    the contraction property for the 
    non-local Stefan problem, and Theorem \ref{mesa_problem}, that states
    the large time behavior for bounded and compactly supported initial data,  
    we obtain
     \[ \limsup\limits_{t\to \infty} \|u(t) -
      \mathcal{P}f\|_{\L^1\left(\mathbb{R}^N\right)}\le \|f -
      f_n\|_{\L^1\left(\mathbb{R}^N\right)} + \|\mathcal{P}f_n -
      \mathcal{P}f\|_{\L^1\left(\mathbb{R}^N\right)}.  \] Letting $n
      \to\infty$ we get the result.  \end{proof}

\begin{remark} A similar result would be valid for the local Stefan
  problem, assuming that the distance between $\mathcal{P}f_+$ and
  $\mathcal{P}f_-$ is strictly positive. Notice that the projected data 
  $\tilde{f}$ is a non-local mesa, see \cite{BCQ2012}.
 \end{remark}

\section{Solutions losing one phase in finite time.}

In this section we we give some partial results on the asymptotic
behaviour of solutions for which either $u$ or $\Gamma(u)$ becomes
nonnegative (or nonpositive) in finite time. 

In this case, we can prove that the asymptotic behaviour is driven  by
the one-phase Stefan regime, however we cannot identify the limit
exactly.


\subsection{A theoretical result.}

\begin{teo}\label{thm:gamma.positive} Let $f\in\L^1(\R^N)$ satisfy
  \eqref{cond:supp} and let $u$ be the corresponding solution. Assume
  that for some $t_0\geq0$, there holds $f^*:=u(t_0)\geq -1$ in $\R^N$.
  then the asymptotic behaviour is given by: $u(t)\to\mathcal{P}f^*$.
\end{teo}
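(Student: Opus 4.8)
The plan is to use the hypothesis $f^{*}=u(t_{0})\ge-1$ to show that the solution can never re-enter the strictly negative phase $\{u<-1\}$ after time $t_{0}$; once this \emph{retention} is established, $\Gamma(u)$ reduces to $(u-1)_{+}$ and the evolution for $t\ge t_{0}$ is governed by the one-phase Stefan regime, whose asymptotics are already known. I would therefore organise the argument in three steps: a retention estimate, the reduction to the one-phase equation, and the identification of the limit through $\mathcal{P}$.

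For the retention I would argue that the constant $-1$ is a stationary solution of \eqref{eq1} (since $\Gamma(-1)=0$), so that ordered data should remain ordered; as $-1\notin\L^{1}(\R^{N})$ I would make this rigorous by a direct energy estimate on $\phi(t):=\int_{\R^{N}}(u(t)+1)_{-}\dx$, which is legitimate because $u\in\C^{1}([0,\infty);\L^{1}(\R^{N}))$ and which satisfies $\phi(t_{0})=0$. Using $\partial_{t}(u+1)_{-}=-\ind{\{u<-1\}}u_{t}$, the equation $u_{t}=J\ast\Gamma(u)-\Gamma(u)$, and the identity $\Gamma(u)=u+1=-\Gamma(u)_{-}$ on $\{u<-1\}$, I would obtain
\[
\phi'(t)=-\int_{\{u<-1\}}J\ast\Gamma(u)_{+}\dx+\int_{\{u<-1\}}J\ast\Gamma(u)_{-}\dx-\|\Gamma(u)_{-}\|_{\L^{1}(\R^{N})}\le0,
\]
where the first integral is nonpositive because its integrand is nonnegative, while the last two terms combine to something nonpositive since $\int_{\R^{N}}J=1$ forces $\int_{\{u<-1\}}J\ast\Gamma(u)_{-}\dx\le\|\Gamma(u)_{-}\|_{\L^{1}(\R^{N})}$. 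Since $\phi\ge0$ and $\phi(t_{0})=0$, this yields $\phi\equiv0$ on $[t_{0},\infty)$, that is $u(t)\ge-1$ a.e. for all $t\ge t_{0}$.

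Once the retention holds, on $[t_{0},\infty)$ we have $\Gamma(u)=(u-1)_{+}\ge0$, so $u$ solves the one-phase Stefan problem $\partial_{t}u=J\ast(u-1)_{+}-(u-1)_{+}$; by uniqueness in $\L^{1}$ (Theorem~\ref{existence_solution_fix_point}) the map $t\mapsto u(t_{0}+t)$ is exactly the solution issued from $f^{*}$. I would then note that $f^{*}$ itself satisfies \eqref{cond:supp}: since $f^{*}\ge-1$ gives $f^{*}_{-}\le1$, one has $\mathcal{P}f^{*}_{-}=f^{*}_{-}$ and $\Gamma(\mathcal{P}f^{*}_{-})=0$, so $\suppD\big(\Gamma(\mathcal{P}f^{*}_{-})\big)=\emptyset$ and the separation condition is trivially met; correspondingly the Baiocchi variable stays nonnegative and the biobstacle problem $\mathrm{(BOP)}$ degenerates into the one-phase obstacle problem, so that $\mathcal{P}f^{*}$ is nothing but the one-phase projection of $f^{*}$. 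Applying the asymptotic theory for the one-phase regime (Theorem~\ref{mesa_problem} together with its extension to general $\L^{1}$ data, equivalently \cite{BCQ2012}) to the datum $f^{*}$ then gives $u(t_{0}+t)\to\mathcal{P}f^{*}$ in $\L^{1}(\R^{N})$, and a finite time-shift does not change the limit, whence $u(t)\to\mathcal{P}f^{*}$.

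The step I expect to be delicate is the retention: one must exclude that negative temperature present near $\{u\simeq-1\}$ re-creates the region $\{u<-1\}$ by nonlocal diffusion, and it is precisely the energy computation above---where the gain $\int_{\{u<-1\}}J\ast\Gamma(u)_{-}$ is beaten by the loss $\|\Gamma(u)_{-}\|_{\L^{1}(\R^{N})}$ thanks to $\int J=1$---that closes the argument (and, notably, does not even require \eqref{cond:supp}). A second point to handle with care is the invocation of the one-phase convergence for a datum $f^{*}$ that is merely in $\L^{1}(\R^{N})$ and satisfies only $f^{*}\ge-1$, in the low dimensions $N=1,2$ where Theorem~\ref{mesa_problem} needed the domination hypotheses of Lemma~\ref{lem:cond.ex}; this is harmless because only the positive one-phase part is active, but the corresponding one-phase statement from \cite{BCQ2012} should be cited explicitly.
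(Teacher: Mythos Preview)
Your proof is correct, but you work considerably harder than the paper does. The paper's argument is essentially a one-liner: time-shift to $u^{*}(t):=u(t+t_{0})$, observe that the new initial datum $f^{*}=u(t_{0})$ satisfies \eqref{cond:supp} trivially (for exactly the reason you state: $f^{*}_{-}\le 1$ forces $\Gamma(\mathcal{P}f^{*}_{-})\equiv 0$, so the support separation is vacuous), and then invoke the Section~4 biobstacle convergence (Theorem~\ref{mesa_problem} and its $\L^{1}$-extension) directly for $f^{*}$. No explicit retention lemma is proved; it is subsumed in the Section~4 machinery applied to $f^{*}$. Your route instead establishes the retention $u(t)\ge -1$ for $t\ge t_{0}$ by the clean energy estimate on $\phi(t)=\int(u(t)+1)_{-}$, which reduces the evolution literally to the one-phase equation and lets you cite~\cite{BCQ2012} rather than Section~4. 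What your approach buys is twofold: first, as you point out, the retention step does not use \eqref{cond:supp} on the original $f$ at all, so the hypothesis on $f$ in the theorem is revealed to be superfluous; second, your argument makes transparent that $\mathcal{P}f^{*}$ here is the genuine one-phase projection of~\cite{BCQ2012} and not merely the abstract biobstacle limit. The low-dimension caveat you raise applies equally to both proofs and is indeed glossed over in the paper.
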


\begin{proof} We just have to consider $u^*(t):=u(t-t_0)$ for $t\geq
  t_0$. Then $u^*$ is the solution associated to the initial data $f^*$
  which satisfies \eqref{cond:supp}. Hence we know that as $t\to\infty$,
  $u^*(t)\to\mathcal{P}f^*$. Therefore, the same happens
 for $u(t)$.  
\end{proof}

Of course a similar result holds if $\Gamma(u)$ becomes nonpositive in
finite time. However, the problem remains open as to identify
$\mathcal{P}f^*$ since we do not know what is exactly $f^*$.

In the rest of the section, we give two examples where such a phenomenon
occurs.  One for which $v=\Gamma(u)$ becomes positive in finite time, and the
other for which $u$ becomes positive in finite time.

\subsection{Sufficient conditions to lie above level $-1$ in finite
time.}
 
In this subsection we assume for simplicity that the initial data $f$ is
continuous and compactly supported, and that $J$ is nonincreasing in the
radial variable.  
We assume $f_+\le g_1$ and $f_-\le g_2$, for some $g_1,\,  g_2\in
\L^1\left(\mathbb{R}^N\right)\cap \C_0(\mathbb{R}^N)$ radial and
strictly decreasing in the radial variable. Moreover,  
Thanks to \cite[Lem 3.9]{BCQ2012}, there exists $R=R(g_1,g_2)$ such
that $\supp\big(v(u)(t)\big)\subset B_R$ for any $t\geq0$ (recall that
we denote by $v=\Gamma(u)$). Notice
that $R$ does not depend on $J$, only on the $\L^1$-norm of $g_1$ and
$g_2$.

We make first the following important assumption:
\begin{equation}\label{assumption:alpha}
  \alpha(v_0,J):=
  \inf_{x\in B_R}\int J(x-y)v_+(y,0)dy>0
\end{equation}
(see in the Remark \ref{remark_hyp_12} below 
some comments on this assumption).\\
Let us also denote $$\beta(J):=\sup_{x\in B_{2R}} J(x).$$
Then we shall also assume that the negative part of $v_0:=v(0)$ 
is ``small'' compared to the
positive part in the following sense:
\begin{equation}\label{assumption:neg.pos}
  \|v_-(0)\|_{\L^1(\R^N)}<\frac{\alpha(v_0,J)}{\beta(J)}\,.
\end{equation}
In such a situation, we first define 
\begin{equation*}
  \bar\eta:=\alpha(v_0,J)-\beta(J)\|v_-(0)\|_{\L^1(\R^N)}>0\,.
\end{equation*}
Then, for $\eta\in(0,\bar\eta)$ we introduce the following function
\begin{equation*}
  \varphi(\eta):=\eta\ln\left(
  \frac{\alpha(v_0,J)}{\eta+
    \beta(J)\|v_-(0)\|_{\L^1(\R^N)}}\right)>0
\end{equation*}
and set
$$\kappa:=\max\big\{\varphi(\eta):\eta\in(0,\bar\eta)\big\}>0.$$
Since actually, $\kappa$ depends only on $J$ and the mass of the 
positive and negative parts of $v(0)$, we denote it by $\kappa(v_0,J)$.

We are then ready to formulate our result:
\begin{prop}
  Assume \eqref{assumption:neg.pos} and moreover
   that the negative part of $f$ is controlled in the sup norm as follows 
   \begin{equation*}
       \|f_-\|_\infty\le 1+\kappa(v_0,J)\,.
   \end{equation*}
   Then in a finite time $t_1=t_1(f)$, 
   the solution satisfies $u(x,t_1)\geq -1$ for all $x\in\R^N$.
\end{prop}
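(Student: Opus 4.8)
The plan is to follow the evolution of the sup-norm of the negative temperature, \(P(t):=\|v_-(t)\|_\infty\) with \(v=\Gamma(u)\), and to prove that it is extinguished at a finite time. First I would record two reductions. Since \(v_-(0)=(|f|-1)_+\) on \(\{f<0\}\), one has \(\|v_-(0)\|_\infty=(\|f_-\|_\infty-1)_+\), so the hypothesis \(\|f_-\|_\infty\le 1+\kappa(v_0,J)\) is exactly \(P(0)\le\kappa(v_0,J)\). Moreover, as \(f\in\C_c(\R^N)\), Lemma~\ref{continuous_sol} makes \(u\) a continuous (indeed classical, \(\C^1\) in time) solution, and \(\supp(v(t))\subset B_R\) for all \(t\); hence the desired conclusion \(u(\cdot,t_1)\ge-1\) on \(\R^N\) is equivalent to \(P(t_1)=0\).

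Next I would obtain a differential inequality for \(P\). For a.e. \(t\) the maximum of \(v_-(\cdot,t)\) is attained at some \(x^\ast=x^\ast(t)\in \overline{B_R}\), necessarily with \(u(x^\ast,t)<-1\) when \(P(t)>0\), and by a standard envelope (Danskin) argument \(P'(t)\le\partial_t v_-(x^\ast,t)\). On \(\{u<-1\}\) we have \(v=u+1\), so \(\partial_t v_-=-u_t=-(J\ast v)-v_-\); evaluating at \(x^\ast\) and discarding \(-v_-\le0\),
\[
P'(t)\le -(J\ast v)(x^\ast,t)=-(J\ast v_+)(x^\ast,t)+(J\ast v_-)(x^\ast,t).
\]
Since \(\supp(v_-(t))\subset B_R\), \(x^\ast\in B_R\), and \(J\) is radially nonincreasing (so \(\sup_{B_{2R}}J=\|J\|_\infty=\beta(J)\)), I bound \((J\ast v_-)(x^\ast,t)\le\beta(J)\|v_-(t)\|_{\L^1(\R^N)}\le\beta(J)\|v_-(0)\|_{\L^1(\R^N)}\), the last inequality because \(t\mapsto\|v_-(t)\|_{\L^1(\R^N)}\) is nonincreasing by Lemma~\ref{gamma_subcaloric}. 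Thus
\[
P'(t)\le -\inf_{B_R}(J\ast v_+)(t)+\beta(J)\|v_-(0)\|_{\L^1(\R^N)}.
\]

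The heart of the matter is a lower bound on the positive flux that decays no faster than \(e^{-t}\), namely \(\inf_{B_R}(J\ast v_+)(t)\ge\alpha(v_0,J)\,e^{-t}\). To obtain it I would start from the pointwise inequality \(\partial_t v_+\ge -v_+-J\ast v_-\), valid a.e.: on \(\{u>1\}\) the equation reads \(\partial_t v_+=J\ast v_+-v_+-J\ast v_-\) and one discards the nonnegative term \(J\ast v_+\), while \(v_+\equiv0\) off this set. Convolving with \(J\ge0\) gives, for each fixed \(x\), \(\partial_t(J\ast v_+)\ge-(J\ast v_+)-J\ast J\ast v_-\), and a Gronwall argument propagates the initial value \((J\ast v_+)(x,0)\ge\alpha(v_0,J)\) for \(x\in B_R\); the radial monotonicity of \(J\) again controls the forcing \(J\ast J\ast v_-\) by \(\beta(J)\|v_-(0)\|_{\L^1(\R^N)}\). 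The main obstacle is precisely to keep this step sharp, so that the cooling produced by the cold phase enters the final estimate only through the single term \(\beta(J)\|v_-(0)\|_{\L^1(\R^N)}\) already present above; combining with the previous display then yields
\[
P'(t)\le -\big(\alpha(v_0,J)\,e^{-t}-\beta(J)\|v_-(0)\|_{\L^1(\R^N)}\big).
\]

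Granting this, I conclude by the constant-floor optimisation that produces \(\kappa\). Fix \(\eta\in(0,\bar\eta)\) and set \(T(\eta):=\ln\big(\alpha(v_0,J)/(\eta+\beta(J)\|v_-(0)\|_{\L^1(\R^N)})\big)\), so that the right-hand side above is \(\le-\eta\) on \([0,T(\eta)]\). Hence, as long as \(P>0\), we have \(P'(t)\le-\eta\) there, so \(P\) decreases by at least \(\eta\,T(\eta)=\varphi(\eta)\) before time \(T(\eta)\); consequently, if \(P(0)\le\varphi(\eta)\) then \(P\) must vanish at some \(t_1\le T(\eta)\). Choosing \(\eta\) to realise \(\kappa(v_0,J)=\max_{\eta}\varphi(\eta)\) and invoking \(P(0)\le\kappa(v_0,J)\), we obtain \(P(t_1)=0\), i.e.\ \(u(\cdot,t_1)\ge-1\) on \(\R^N\), at the finite time \(t_1=t_1(f)\).
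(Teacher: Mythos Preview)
Your overall strategy---tracking \(P(t)=\|v_-(t)\|_\infty\) and showing it is driven to zero by a differential inequality---is a reasonable reformulation, and the final optimisation over \(\eta\) is exactly the one in the paper. However, there is a genuine gap at the step you yourself flag as ``the main obstacle'': the lower bound \(\inf_{B_R}(J\ast v_+)(t)\ge \alpha(v_0,J)\,e^{-t}\) does \emph{not} follow from the Gronwall argument you sketch. From \(\partial_t(J\ast v_+)\ge -(J\ast v_+)-J\ast J\ast v_-\) and \(\|J\ast J\ast v_-\|_\infty\le\beta(J)\|v_-(0)\|_{\L^1}\), Gronwall only gives
\[
(J\ast v_+)(x,t)\ \ge\ \alpha\,e^{-t}-\beta(J)\|v_-(0)\|_{\L^1}\big(1-e^{-t}\big),
\]
so when you feed this back into \(P'(t)\le -(J\ast v_+)(x^\ast,t)+\beta(J)\|v_-(0)\|_{\L^1}\) the cold phase contributes \emph{twice}, and the resulting inequality \(P'(t)\le -\alpha e^{-t}+\beta(J)\|v_-(0)\|_{\L^1}(2-e^{-t})\) is too weak to close the argument with the constants \(\bar\eta\) and \(\kappa\) as defined. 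You acknowledge the difficulty but then simply assert the desired inequality without resolving it.

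The paper avoids this double counting by a bootstrapping (continuity) argument rather than a direct Gronwall bound on \(J\ast v_+\). One sets
\[
t_0:=\sup\big\{t\ge0:\ J\ast v(\cdot,t)>0\text{ on }B_R\big\}>0,
\]
and observes that on \((0,t_0)\) the full convolution \(J\ast v\) is positive, so \(u_t=J\ast v - v\ge -v\); in particular \(\partial_t v_+=\ind{\{u>1\}}u_t\ge -v_+\) with \emph{no} \(v_-\) correction, yielding the clean pointwise retention \(v_+(x,t)\ge e^{-t}v_+(x,0)\). This immediately gives \(J\ast v(x,t)\ge \alpha e^{-t}-\beta(J)\|v_-(0)\|_{\L^1}\) on \(B_R\times(0,t_0)\), which stays positive up to \(t_1(\eta)\), so \(t_0\ge t_1\). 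On that interval, at any point where \(v\le0\) one has \(u_t\ge J\ast v>\eta\), and integrating in time pushes \(u\) above \(-1\) by time \(t_1\). The key idea you are missing is precisely this self-improving use of the positivity of the whole convolution \(J\ast v\), not just of \(J\ast v_+\), to decouple the retention estimate for \(v_+\) from the cold phase.
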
 

\begin{proof} By our assumptions, for all $x$ we have 
  $f(x)\geq -1-\kappa(v_0,J)$. Then for any $x\in B_R$,
  \[ \begin{array}{ll} J\ast
    v(x,0) &
    =\displaystyle\int_{\{v>0\}}J(x-y)v(y,0)dy+\int_{\{v<0\}}J(x-y)v(y,0)dy
    \smallskip\\ & \ge \alpha(v_0,J)
    -\beta(J)\|v_-(0)\|_{\L^1(\R^N)}>0.
  \end{array} \] 
Remember that for the points $x\notin B_R$, we have $v_0(x)=0$ and also
$v(x,t)=0$ for any time $t\geq0$ (though we may ---and will--- have mushy regions, 
$\{|v|<1\}$, outside $B_R$ of course).

Thanks to the continuity of $u$ (and $v$), the following time is
well-defined:
\begin{equation*}
  t_0:=\sup\{t\geq0: J\ast v(x,t) >0 \text{ for any } x\in B_R\}>0\,.
\end{equation*}
This implies that $$u_t \geq -v, \;\mbox{ in }\; B_R\times(0,t_0),$$ so that
\begin{equation*}
  \partial_t v_+ = \ind{\{v>0\}} \partial_t u\geq -v\ind{\{v>0\}}=-v_+\,. 
\end{equation*}
Hence, in $B_R\times(0,t_0)$, $v_+$ enjoys the following retention property:
\begin{equation}\label{retent} 
  v_+(x,t)\ge e^{-t}v_+(x,0), \;\forall t\in [0,t_0).  
\end{equation} 
This implies in particular that if $v(x,0)$ is positive at some point,
$v(x,t)$ remains positive at this point at least until $t_0$.

Now, let us estimate $t_0$. For any $x\in B_R$ and $t\in(0,t_0)$, we have
\[ \begin{array}{ll} J\ast v(x,t) & \geq 
  \displaystyle\int_{\{v>0\}}J(x-y)v(y,t)dy+\int_{\{v<0\}}J(x-y)v(y,t)dy
  \smallskip\\ & \ge
 e^{-t}\displaystyle\int_{\{v>0\}}J(x-y)v(y,0)dy-
   \beta(J)\|v_-(t)\|_{\L^1\left(\mathbb{R}^N\right)}
   \smallskip\\ & \ge
   \alpha(v_0,J) e^{-t}-
   \beta(J)\|v_-(0)\|_{\L^1(\R^N)}\,,  \end{array} 
 \]
where we have used Corollary \ref{contract_prop_L_1}, which gives the $\L^1$-contraction property for $v_-$, deriving from the fact that it is subcaloric.
So, if we take $\eta$ reaching the max of $\varphi(\eta)=\kappa$ and set 
\begin{equation*}
  t_1(\eta):= \ln \bigg( \dfrac{\alpha(v_0,J)}{ \eta +
	 \beta(J) \|v_-(0)\|_{\L^1(\R^N)}}\bigg)\,,
\end{equation*}
then for any $t\in(0,t_1)$, we have 
$\alpha(v_0,J) e^{-t}-\beta(J) 
\|v_-(0)\|_{\L^1(\R^N)}> \eta>0$.
This proves that $t_0\geq t_1$.

Since $v_+$ has the retention property in $(0,t_0)$, the points in
$$\mathcal{C}^+:=\{x\in\R^N:v(x,0)>0\}$$ remain in this set at least until
$t_0$.

Then, for any $x\in\mathcal{C}^-:=\{x\in\R^N: v(x,0)\leq0\}$,
we define $$t(x):=\sup\{t>0:v(x,t)\leq0\}.$$
If $t(x)=0$, this means that $v(x,t)$ becomes positive immediately and
will remain as such at least until $t_1$ 
so we do not need to consider such points. We are
left with assuming $t(x)>0$ (or infinite).

Then in this last case, we shall prove that $t(x)\leq t_1$ by
contradiction: let us assume that $t(x)>t_1$ and let us come back to the
previous estimate. We then have, for any $t\in(0,t_1)$:
\[  u_t(x,t) = J\ast v(x,t)-v(x,t) \ge J\ast v(x,t)>\eta>0\,.\]
Thus, integrating the equation in time at $x$ yields
\begin{equation*}
  u(x,t)> -1-\kappa(v_0,J)+ \eta\cdot t,\;\;\forall t\in
  [0,t_1].
\end{equation*}
By our choice we have precisely $\kappa(v_0,J)=\varphi(\eta)=\eta\cdot t_1(\eta)$.
Therefore, at least for $t=t_1$, we have
\begin{equation*}
  u(x,t_1)> -1-\kappa(v_0,J)+\eta\cdot t_1 > -1\,,
\end{equation*}
which is a contradiction with the fact that $t(x)>t_1$. Hence $t(x)\leq
t_1$, which means that at such points, the solution becomes equal to or
above level $-1$ before $t_1$.

So, combining everything, we have finally obtained that for any point
$x\in\R^N$, $u(x,t)$ becomes greater than or equal to $-1$ before the
time $t_1$, which ends the proof.  
\end{proof}
	 
\begin{remark}\label{remark_hyp_12} Hypothesis 
\eqref{assumption:alpha} expresses that for
  any $x\in B_R$, there is some positive contribution in the convolution
  with the positive part of $v_0$. So, this implies that at least the
  following condition on the intersection of the supports should hold:
  $$\forall x\in B_R\,,\quad \big(x+B_{R_J}(0)\big)\cap \supp\big( (v_0)_+
  \big)\neq\emptyset\,.$$ Actually, if the radius $R_J$ is big enough to
  contain all the support of $v_0$ this is satisfied. But even if it is
  not so big, it there are positive values of $v_0$ which spread in many
  directions, this condition can be satisfied. 
  
  Then, \eqref{assumption:neg.pos} is a condition on the negative part,
  which should not be too big so that all the possible points such that 
  $v(x,0)<0$ will enter into the positive set for $v$ in finite time. 
  The exact control is a mix
  between the mass and the infinite norm of the various quantities.
\end{remark}

\end{document}